\journal{Journal of \LaTeX\ Templates}
\makeatletter \@addtoreset{equation}{section}
\newtheorem{thm}{Theorem}[section]
\newtheorem{cor}[thm]{Corollary}
\newtheorem{lem}[thm]{Lemma}
\newtheorem{prop}[thm]{Proposition}
\theoremstyle{definition}
\newtheorem{rem}[thm]{Remark}
\newtheorem{assum}[thm]{Assumption}
\newtheorem{RHP}[thm]{Riemann-Hilbert Problem}
\renewcommand{\baselinestretch}{1.25}
\begin{document}

\begin{frontmatter}

\title{Soliton resolution for the complex short pulse equation with weighted Sobolev initial data \tnoteref{mytitlenote}}
\tnotetext[mytitlenote]{%Project supported by the Fundamental Research Fund for the Central Universities under the grant No. 2017XKQY101.\\
%\hspace*{3ex}$^{*}$
Corresponding author.\\
\hspace*{3ex}\emph{E-mail addresses}: sftian@cumt.edu.cn,
shoufu2006@126.com (S. F. Tian) }

%% Group authors per affiliation:
\author{Zhi-Qiang Li, Shou-Fu Tian$^{*}$ and Jin-Jie Yang}
\address{
School of Mathematics, China University of Mining and Technology,  Xuzhou 221116, People's Republic of China
}

\begin{abstract}
We employ the $\bar{\partial}$-steepest descent method in order to investigate
the Cauchy problem of the complex short pulse (CSP) equation with initial conditions in weighted Sobolev space $H^{1,1}(\mathbb{R})=\{f\in L^{2}(\mathbb{R}): f',xf\in L^{2}(\mathbb{R})\}$. The long time asymptotic behavior of the solution $u(x,t)$ is derived in a fixed space-time cone $S(x_{1},x_{2},v_{1},v_{2})=\{(x,t)\in\mathbb{R}^{2}: y=y_{0}+vt, ~y_{0}\in[y_{1},y_{2}], ~v\in[v_{1},v_{2}]\}$. Based on the resulting asymptotic behavior, we prove the solution resolution  conjecture of the CSP equation which includes the soliton term confirmed by $N(I)$-soliton on discrete spectrum and the $t^{-\frac{1}{2}}$ order term on continuous spectrum with residual error up to $O(t^{-1})$.
\end{abstract}

\begin{keyword} Integrable system \sep
The complex short pulse equation %\sep Infinite conservation laws
\sep Riemann-Hilbert problem \sep $\bar{\partial}$-steepest descent method \sep Soliton resolution.
\end{keyword}

\end{frontmatter}

%\linenumbers
\tableofcontents
\newpage

\section{Introduction}
In nonlinear optics,  the well-known nonlinear Schr\"{o}dinger (NLS) equation can be used to model the pulse propagation in optical fibers\cite{NLS-optic}.  It is effective that the NLS equation is used to approximate the Maxwell's equations \cite{NLS-Maxwell} as the amplitude changes slowly. Therefore, more attention is paid to the  research of NLS-type equations \cite{Tian-PAMS}-\cite{Wangds-2019-JDE}. However, when the pulse becomes shorter, i.e., the width of optical pulse in the order of femtosecond($10^{-15}s$), it is not suitable to use the NLS equation continuously for describing the optical pulse propagation \cite{NLS-femtosecond}. In 2004, Sch\"{a}fer and Wayne proposed the short pulse (SP) equation \cite{SP-Eq}
\begin{align}\label{SP-equation}
q_{xt}(x,t)=q(x,t)+\frac{1}{6}(q^{3}(x,t))_{xx},
\end{align}
which can be used to describe the ultra-short optical pulse and approximate the corresponding solution of the Maxwell's equations more effectively. More importantly, the SP equation \eqref{SP-equation} can be viewed as the short-wave limit of the modified Camassa-Holm (CH) equation \cite{mCH-1}-\cite{mCH-2}
\begin{align}\label{mCH-equation}
m_t+\left((u^{2}-u^{2}_{x})m\right)_{x}+2u_{x}=0.
\end{align}
That means the SP equation can be transformed into mCH equation via applying a transformation.
Since the CH equation and modified CH equation have rich mathematical structure and  properties \cite{Constantin-1}-\cite{Fokas-mCH}, it is meaningful to study the SP equation \eqref{SP-equation}.
Regrettably, it is noted that $q(x,t)$ is a real-valued function in Eq.\eqref{SP-equation} which implies that the one-soliton solution of the SP equation \eqref{SP-equation} possesses no physical interpretation although the SP equation \eqref{SP-equation} is derived from the physical background \cite{SP-no-phy-1,SP-no-phy-2}. In order to study the solution of SP equation in the actual physical context,  Feng proposed the so-called complex short pulse equation (CSP) equation\cite{CSP-equation-Feng}
\begin{align}\label{1.1}
u_{xt}+u+\frac{1}{2}(|u|^{2}u_{x})_{x}=0,
\end{align}
where $u(x,t)$ is a complex-valued function in 2015. It is worth noting that amplitude and phase can be described by using the complex-valued function. Thus, it is more effective to use CSP equation to describe the ultra-short optical pulse propagation in optical fibers. Moreover, like SP equation \cite{Xu-SP-JDE}, the CSP equation \eqref{1.1} also admits a Wadati-Konno-Ichikawa (WKI)-type Lax pair \cite{CSP-equation-Feng,CSP-equation-Lax}. Then, lots of work for the CSP equation \eqref{1.1} have been done. For example, via applying Hirota method and Darboux transformation method, the soliton solution, multi-breather and higher-order rogue wave solution of the CSP equation \eqref{1.1} are reported \cite{CSP-equation-Feng,CSP-equation-rogue}.
Moreover, the conservation laws of the CSP equation \eqref{1.1} have been studied in \cite{CSP-conserv-Feng}. From Lax pair representation \eqref{2.1}, the following formula can be obtained by employing the transformation $\Gamma= \psi_{2} \psi_{1}^{-1}$, i.e.,
\begin{align}\label{conserv-1}
2zu_{x}\Gamma=zu_{x}u_{x}^{*}-u_{x}(u^{-1}_{x}\cdot u_{x}\Gamma)_{x}-z(u_{x}\Gamma)^{2}.
\end{align}
Expanding $u_{x}\Gamma$ as follows
\begin{align*}%\label{conserv-2}
u_{x}\Gamma=\sum_{n=1}^{\infty}F_{n}z^{-n},
\end{align*}
and substituting it into Eq.\eqref{conserv-1}, it is easy to derive that $F_{n}$ satisfies the following recurrence relation
\begin{align*}
2F_{n}=u_{x}u_{x}^{*}\delta_{n,0}-u_{x}(u_{x}^{-1}F_{n-1})_{x}-\sum_{\ell=0}^{n}F_{\ell}F_{n-\ell}.
\end{align*}
The conserved density turns out to be
\begin{align*}%\label{conserv-3}
F_{0}&=-1+\sqrt{1+|u_{x}|^{2}},~~ F_{1}=-\frac{u_{x}u_{xx}^{-1}(-1+\sqrt{1+|u_{x}|^{2}})}{2\sqrt{1+|u_{x}|^{2}}} -\frac{u^{*}_{x}u_{xx}+u_{x}u^{*}_{xx}}{4(1+|u_{x}|^{2})},\\
F_{2}&=-\frac{u_{x}u_{xx}^{-1}F_{1}-F_{1,x}-F^{2}_{1}}{2\sqrt{1+|u_{x}|^{2}}}, \ldots.
\end{align*}
Then the  conserved quantities can be expressed as
\begin{align*}
I_{0}&=\int_{-\infty}^{+\infty}\left(-1+\sqrt{1+|u_{x}(x,t)|^{2}}\right)dx,\\
I_{1}&=\int_{-\infty}^{+\infty}\left(-\frac{u_{x}u_{xx}^{-1}(-1+\sqrt{1+|u_{x}|^{2}})}{2\sqrt{1+|u_{x}|^{2}}} -\frac{u^{*}_{x}u_{xx}+u_{x}u^{*}_{xx}}{4(1+|u_{x}|^{2})}\right)dx,\\
I_{2}&=\int_{-\infty}^{+\infty}\left(-\frac{u_{x}u_{xx}^{-1}F_{1}-F_{1,x}-F^{2}_{1}}{2\sqrt{1+|u_{x}|^{2}}}
\right)dx,\ldots.
\end{align*}
In addition, applying the nonlinear steepest descent method of Defit and Zhou,  Xu and Fan \cite{Xu-CSP-JDE} shows the long time asymptotic behavior of the CSP equation \eqref{1.1} with residual error up to $O(\frac{\log t}{t})$.

In this work, we employ $\bar{\partial}$-steepest descent method to investigate the soliton resolution for the CSP equation with the initial value condition
\begin{align}\label{1.3}
  u(x,0)=u_{0}(x)\in H^{1,1}(\mathbb{R}),
\end{align}
where
\begin{align}\label{Sobolev-space}
 H^{1,1}(\mathbb{R})=\{f\in L^{2}(\mathbb{R}): f',xf\in L^{2}(\mathbb{R})\}.
\end{align}
It is interesting that compared with the result reported in \cite{Xu-CSP-JDE}, our work has a more obvious advantage in the research of long time asymptotic behavior for the CSP equation \eqref{1.1}. The accuracy of our asymptotic result   can reach $O(t^{-1})$, which
 cannot be achieved in the previous work \cite{Xu-CSP-JDE}.

Since Manakov first paid attention to the long time asymptotic behavior of nonlinear evolution equations \cite{Manakov-1974}, the research of it has been widely concerned. In 1976, Zakharov and Manakov derived the long time asymptotic solutions of NLS equation with decaying initial value \cite{Zakharov-1976}. In 1993, Defit and Zhou developed a nonlinear steepest descent method which can be used to systematically study the long time asymptotic behavior of nonlinear evolution equations \cite{Deift-1993}. After years of unremitting research by scholars, the nonlinear steepest descent method has been improved. An example is that when the initial value is smooth and decay fast enough, the error term is $O(\frac{\log t}{t})$ which is shown in \cite{Deift-1994-1, Deift-1994-2}. And the work \cite{Deift-2003} shows that the error term is $O(t^{-(\frac{1}{2}+\iota)})$ for any $0<\iota<\frac{1}{4}$ when the initial value belongs to the weighted Sobolev space \eqref{Sobolev-space}.

In recent years, combining steepest descent with $\bar{\partial}$-problem, McLaughlin and Miller \cite{McLaughlin-1, McLaughlin-2}, developed a $\bar{\partial}$-steepest descent method to study the asymptotic of orthogonal polynomials. Then, this method was successfully used to investigate defocusing NLS equation with finite mass initial data \cite{Dieng-2008} and with finite density initial data \cite{Cuccagna-2016}. It should be pointed out that different from the nonlinear steepest descent method, the delicate estimates involving $L^{p}$ estimates of Cauchy projection operators can be avoided by using $\bar{\partial}$-steepest descent method. Also, the work in \cite{Dieng-2008} shows that the error term is $O(t^{-\frac{3}{4}})$ when the initial value belongs to the weighted Sobolev space \eqref{Sobolev-space}. Therefore, a series of great work has been done by applying $\bar{\partial}$-steepest descent method \cite{AIHP}-\cite{Faneg-3}.

In \cite{Yang-SP}, Yang and Fan give the long time asymptotic behavior of the solution $q(x,t)$ of the SP equation \eqref{SP-equation} via applying the $\bar{\partial}$-steepest descent method. In this work, we extend above results to derive the long time asymptotic behavior of the solution $u(x,t)$ of the CSP equation \eqref{1.1}. It is worth noting that there are some differences from that on SP equation \eqref{SP-equation} which is shown in the following four aspects.

\begin{enumerate}[(I)]
  \item When we construct the Riemann-Hilbert problem (RHP) corresponding to the initial value problem for the CSP equation \eqref{1.1}, an improved transformation need to be introduced to guarantee that the eigenfunctions tend to the identity matrix as the spectral parameter $z\rightarrow\infty$. An obvious result is that there exists an exponential term in the solution $u(x,t)$ which is shown in \eqref{u-sol}.
  \item Compared with the case in SP equation, the symmetry condition, i.e.,
  \begin{align}\label{sym-z-z}
    M(x,t,-z)=\sigma_{2}M(x,t,z)\sigma_{2},
  \end{align}
  does not exist when we construct the RHP corresponding to the CSP equation \eqref{1.1}.
  \item Since the symmetry condition \eqref{sym-z-z} does not exist, it is necessary to analyze the local model problem around the phase points $z=\pm z_{0}$ respectively, see subsection \ref{section-local-model}. Also due to this reason, the final results we have obtained in this work is essentially different from the case for the SP equation.
  \item Due to the difference between the Lax pair of the CSP equation and SP equation, $\theta(z)$, which is defined in section \ref{section-Conjugation}, is different from the the case for the SP equation which will have influence on the analysis of the $\bar{\partial}$-RH problem for $M^{(3)}(z)$ which is defined in \eqref{delate-pure-RHP}. We need to take some different scaling techniques to investigate the estimates of $M^{(3)}$, see section \ref{section-Pure-dbar-RH}.
\end{enumerate}

\noindent \textbf{Our main result and remark of the soliton resolution conjecture for the CSP equation \eqref{1.1} are given as follows.}

\begin{thm}\label{Thm-1}
Suppose that the initial values $u_{0}(x)$ satisfy the Assumption \eqref{assum} and $u_{0}(x)\in H^{1,1}(\mathbb{R})$. Let $u(x,t)$ be the solution of CSP equation \eqref{1.1}. The scattering data is denoted as $\{r,\{z_{k},c_{k}\}_{k=1}^{N}\}$ which generated from the initial values $u_{0}(x)$. For fixed $y_{1},y_{2},v_{1},v_{2}\in \mathbb{R}$ with $x_{1}<x_{2}$, $v_{1}<v_{2}\in\mathbb{R}^{-}$, and $I=\{z:-\frac{1}{4v_{1}}<|z|^{2}<-\frac{1}{4v_{2}}\}$, $z^{2}_{0}=\frac{t}{4y}$, then as $t\rightarrow \infty$ and $(y,t)\in S(y_{1},y_{2},v_{1},v_{2})$ which is defined in \eqref{space-time-S}, the solution $u(x,t)$ can be expressed as
\begin{align}\label{9.2}
\begin{split}
u(x,t)e^{-2d}&=u(y(x,t),t)e^{-2d}\\
&=u_{sol}(y(x,t),t;\sigma_{d}(I))T^{2}(0)(1+T_{1})-it^{-\frac{1}{2}}f^{\pm}_{12}+O(t^{-1}),\\
y(x,t)=x-&c_{+}(x,t,\sigma_{d}(I))-iT_{1}^{-1}-it^{-\frac{1}{2}}f^{\pm}_{11}+O(t^{-1}).
\end{split}
\end{align}
Here, $u_{sol}(x,t;\hat{\sigma}_{d}(I))$ is the $N(I)$ soliton solution, $T(z)$ is defined in \eqref{4.5}, and
\begin{align*}
f^{\pm}_{12}=\frac{1}{i\sqrt{z_{0}}}
&[M^{(out)}(0)^{-1}(M^{(out)}(z_0)^{-1}M_{1}^{pc,\pm}(z_{0})M^{(out)}(z_0)\\&+
M^{(out)}(-z_0)^{-1}M_{1}^{(pc),\pm}(-z_{0})M^{(out)}(-z_0))M^{(out)}(0)]_{12},\\
f^{\pm}_{11}=\frac{1}{i\sqrt{z_{0}}}
&[M^{(out)}(0)^{-1}(M^{(out)}(z_0)^{-1}M_{1}^{pc,\pm}(z_{0})M^{(out)}(z_0)\\&+
M^{(out)}(-z_0)^{-1}M_{1}^{(pc),\pm}(-z_{0})M^{(out)}(-z_0))M^{(out)}(0)]_{11}.
\end{align*}
\end{thm}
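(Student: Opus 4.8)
The plan is to prove Theorem \ref{Thm-1} through a chain of transformations of the Riemann--Hilbert problem associated with the CSP equation, culminating in an explicit reconstruction of $u(x,t)$ from the large-$z$ and near-$z=0$ expansions of the RHP solution. First I would use the conjugation function $T(z)$ defined in \eqref{4.5} to renormalize the problem: this interpolates the residue conditions at the discrete spectrum $\{z_k\}$, absorbs the reflection coefficient into a product that decays in the half-planes determined by the sign of $\mathrm{Re}(2it\theta(z))$, and produces a transformed matrix $M^{(1)}$ whose jump becomes suitably triangular away from the stationary phase points $z=\pm z_0$ with $z_0^2 = t/(4y)$. The constants $T(0)$ and its subleading coefficient $T_1$ near $z=0$ enter the final formula precisely because, for the WKI-type Lax pair, $u$ is recovered from the expansion of $M$ at $z=0$ as well as at $z=\infty$; the same mechanism, together with the improved normalization forcing the eigenfunctions to the identity at $z\to\infty$, accounts for the exponential factor $e^{2d}$.

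Next I would open lenses along the steepest-descent directions through $\pm z_0$, extending $r(z)$ off the real axis via a smooth (non-analytic) $\bar\partial$-extension; this replaces the oscillatory jump by a piecewise-analytic model jump supported on the crosses plus a controlled $\bar\partial$-source, giving a mixed problem $M^{(2)}$. I then decompose $M^{(2)} = M^{(3)} M^{rhp}$, where $M^{rhp}$ solves a pure Riemann--Hilbert model (jumps on the crosses, vanishing $\bar\partial$-derivative) and $M^{(3)}$ is the pure $\bar\partial$-problem of \eqref{delate-pure-RHP} with identity boundary behavior. The model $M^{rhp}$ factors further into the outer soliton model $M^{(out)}$, which by construction yields the $N(I)$-soliton solution $u_{sol}(y,t;\sigma_d(I))$, and two local parabolic-cylinder models centered at $z_0$ and $-z_0$. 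Because the symmetry \eqref{sym-z-z} fails for the CSP equation, these two local models must be built and solved independently; each contributes at order $t^{-1/2}$ through the subleading coefficients $M_1^{pc,\pm}(\pm z_0)$, and together they assemble into the matrix whose $(1,2)$ and $(1,1)$ entries furnish $f_{12}^{\pm}$ and $f_{11}^{\pm}$.

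I would then estimate the pure $\bar\partial$-contribution $M^{(3)}$ by recasting it as an integral equation with the solid Cauchy operator and bounding its norm via the $L^p$ decay of the $\bar\partial$-source against the Gaussian factor $e^{-ct|z|^2}$ coming from $\mathrm{Re}(2it\theta)$ near the phase points. Applying the scaling $z \mapsto \pm z_0 + \zeta/\sqrt{t}$ adapted separately to each of $\pm z_0$, in accordance with remark (IV), shows that $M^{(3)} = I + O(t^{-3/4})$ uniformly, and more sharply that its net contribution to the reconstruction of $u$ is $O(t^{-1})$. Finally I would collect the expansions at $z=0$: multiplying back the conjugation factor $T^2(0)(1+T_1)$, the outer-model value, the two local contributions $-it^{-1/2}f_{12}^{\pm}$, and the $\bar\partial$ error, then reading off both $u(x,t)e^{-2d}$ and the coordinate relation $y(x,t)=x-c_+(x,t,\sigma_d(I))-iT_1^{-1}-it^{-1/2}f_{11}^{\pm}+O(t^{-1})$ from the same expansion, which yields \eqref{9.2}.

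The hard part will be the error analysis of the pure $\bar\partial$-problem coupled with the book-keeping of the coordinate change. Establishing that a merely Lipschitz (rather than analytic) $\bar\partial$-source produces an $O(t^{-1})$ rather than an $O(t^{-3/4})$ contribution to $u$ requires exploiting the extra decay in the off-diagonal reconstruction and the $H^{1,1}$ regularity of the initial data transmitted through the reflection coefficient. Moreover, because $u$ is obtained implicitly via $y(x,t)$ rather than directly in $x$, one must expand both $u(y,t)$ and $y(x,t)$ to matching order and verify their consistency, and it is exactly this implicit reconstruction, absent the symmetry \eqref{sym-z-z}, where the CSP analysis genuinely departs from the SP case.
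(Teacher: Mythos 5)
Your proposal follows the paper's proof essentially step for step: conjugation by $T(z)$ from \eqref{4.5}, continuous $\bar{\partial}$-extension of the jump, the factorization $M^{(2)}=M^{(3)}M^{(2)}_{RHP}$ with the outer $N(I)$-soliton model and two \emph{separately solved} parabolic-cylinder models at $\pm z_{0}$ (forced by the failure of \eqref{sym-z-z}) feeding a small-norm error function whose expansion at $z=0$ produces the $t^{-1/2}$ terms $f^{\pm}_{12},f^{\pm}_{11}$, followed by reading off both $u(y,t)e^{-2d}$ and the implicit coordinate $y(x,t)$ from the $z\rightarrow 0$ expansion along the imaginary axis where $R^{(2)}=\mathbb{I}$. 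The one cosmetic discrepancy is your claimed uniform bound $M^{(3)}=\mathbb{I}+O(t^{-3/4})$, imported from the NLS literature: the paper instead proves the operator bound $\|\mathrm{S}\|_{L^{\infty}\rightarrow L^{\infty}}\lesssim t^{-1/6}$ and then the sharper estimates $\|M^{(3)}(0)-\mathbb{I}\|\lesssim t^{-1}$ and $M^{(3)}_{1}\lesssim t^{-1}$, which are exactly (and only) the quantities your reconstruction step requires.
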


\begin{rem}
Theorem \ref{Thm-1} need the condition $u_{0}(x)\in H^{1,1}(\mathbb{R})$ so that the inverse scattering transform possesses  well mapping properties. Also the condition $u_{0}(x)\in H^{1,1}(\mathbb{R})$ guarantees that there exists no discrete spectrum on the real axis. It is noted that the asymptotic results only depend on the $H^{1}(\mathbb{R})$ norm of $r$, therefore, for any $u_{0}(x)\in H^{1,1}(\mathbb{R})$ admitting the Assumption \eqref{assum}, the process of the large-time analysis and calculations shown in this work is unchanged.
\end{rem}

\noindent \textbf{Organization of the rest of the work}

In section 2, based on the Lax pair of the CSP equation, we introduce two kinds of eigenfunctions to deal with the spectral singularity. Also, the analytical, symmetries and asymptotic properties are analyzed.
In section 3, using similar ideas to \cite{Xu-CSP-JDE}, the RHP for $M(z)$ is constructed for the CSP equation with initial problem.
In section 4, in order to obtain a new RHP for $M^{(1)}(z)$ that its jump matrix can be decomposed into two triangle matrices near the phrase point $z=\pm z_{0}$, we introduce the matrix function $T(z)$ to define the new RHP.
In section 5, we make the continuous extension of the jump matrix off the real axis by introducing a matrix function $R^{(2)}(z)$ and get a mixed $\bar{\partial}$-Riemann-Hilbert(RH) problem.
In section 6, we decompose the mixed $\bar{\partial}$-RH problem into two parts which are a model RH problem with $\bar{\partial}R^{(2)}=0$ and a pure $\bar{\partial}$-RH problem with $\bar{\partial}R^{(2)}\neq0$, respectively, i.e., $M^{(2)}_{RHP}$ and $M^{(3)}$.
In section 7, we solve the model RH problem $M^{(2)}_{RHP}$  via an outer model $M^{(out)}(z)$ for the soliton part and inner model $M^{(\pm z_{0})}$ near the phase point $\pm z_{0}$ which can be solved by matching  parabolic cylinder model problem respectively. Also, the error function $E(z)$ with a small-norm RH problem is obtained.
In section 8, the pure $\bar{\partial}$-RH problem for $M^{(3)}$ is studied.
Finally, in section 9, we obtain the soliton resolution and long time asymptotic behavior of the CSP equation.

\section{The spectral analysis of CSP equation}
In order to study the soliton resolution of the initial value problem (IVP) for the CSP equation via applying $\bar{\partial}$-steepest descent method, we first construct a RHP based on the Lax pair of the CSP equation. The WKI-type Lax pair of the CSP equation reads
\begin{align}\label{2.1}
\psi_{x}(x,t,z)=U(x,t,z)\psi(x,t,z), ~~\psi_{t}(x,t,z)=V(x,t,z)\psi(x,t,z),
\end{align}
where
\begin{align*}
U(x,t,z)=izU_{1}=iz(\sigma_{3}+U_{0x}),\\
V(x,t,z)=-\frac{iz}{2}|u|^{2}U_{1}-\frac{1}{4iz}\sigma_{3}+\frac{1}{2}V_{0},
\end{align*}
with
\begin{align*}
U_{0}=\left(
        \begin{array}{cc}
          0 & u \\
          u^{*} & 0 \\
        \end{array}
      \right), ~~\sigma_{3}=\left(
                            \begin{array}{cc}
                              1 & 0 \\
                              0 & -1 \\
                            \end{array}
                          \right), ~~V_{0}=\left(
        \begin{array}{cc}
          0 & u \\
          -u^{*} & 0 \\
        \end{array}
      \right).
\end{align*}
The $u^{*}$ infers to the conjugate of the complex potential function $u$.

Generally, when we deal with the IVP of integrable equations, we just employ the $x$-part of the Lax pair base on the inverse scattering transform method. The $t$-part of Lax pair is used to control the time evolution of the scattering data. However, the Lax pair \eqref{2.1} of the CSP equation possesses two singularities, i.e., $z=0$ and $z=\infty$. Consequently, in order to recover the potential function $u(x,t)$, the $t$-part of Lax pair and the expansion of the eigenfunction as spectral parameter $z\rightarrow 0$. Therefore, we deal with the two singularities at $z=0$ and $z=\infty$ applying two different transformations in the following analysis.

\subsection{The case of singularity at z=0}
We first introduce a transformation
\begin{align}\label{2.2}
\psi(x,t;z)=\mu^{0}(x,t;z)e^{i(zx+\frac{1}{4iz}t)\sigma_{3}},
\end{align}
then, an equivalent Lax pair can be derived as
\begin{align}
\begin{split}
\mu^{0}_{x}&-iz[\sigma_{3},\mu^{0}]=U_{2}\mu^{0},\\
\mu^{0}_{t}&-\frac{i}{4z}[\sigma_{3},\mu^{0}]=V_{2}\mu^{0},\label{2.3}
\end{split}
\end{align}
where
\begin{align*}
U_{2}=izU_{0x},~~V_{2}=-\frac{iz}{2}|u|^{2}U_{1}+\frac{1}{2}V_{0},
\end{align*}
and $\mu^{0}=\mu^{0}(x,t;z)$. Additionally, $[A,B]$ means $AB-BA$ where $A$ and $B$ are $2\times2$ matrices. The Lax pair \eqref{2.3} can be written in full derivative form
\begin{align}\label{2.4}
d(e^{-i(zx+\frac{1}{4z}t)\hat{\sigma}_{3}}\mu^{0})=
e^{-i(zx+\frac{1}{4z}t)\hat{\sigma}_{3}}(U_{2}dx+V_{2}dt)\mu^{0},
\end{align}
where $e^{\hat{\sigma}_{3}}A=e^{\sigma_{3}}Ae^{-\sigma_{3}}$.
By selecting two special integration paths i.e., $(-\infty,t)\rightarrow(x,t)$ and $(+\infty,t)\rightarrow(x,t)$, on Eq.\eqref{2.4}, we define two eigenfunction $\mu^{0}_{\pm}(x,t;z)$ which can be derived as the following Volterra type integrals
\begin{align}
\begin{matrix}
\mu^{0}_{-}(x,t;z)=\mathbb{I}+
\int_{x}^{-\infty}e^{iz(x-y)\hat{\sigma}_{3}}U_{2}(y,t;z)\mu^{0}_{-}(y,t;z)dy,\\
\mu^{0}_{+}(x,t;z)=\mathbb{I}-
\int_{x}^{+\infty}e^{iz(x-y)\hat{\sigma}_{3}}U_{2}(y,t;z)\mu^{0}_{+}(y,t;z)dy.
\end{matrix}
\end{align}
Then we can derive the analytic property and asymptotic property of $\mu^{0}_{\pm}(x,t;z)$.

\begin{prop}
The properties of $\mu^{0}_{\pm}(x,t;z)$:
\begin{itemize}
  \item (Analytic property) It is assumed that $u(x)-u_{0}\in H^{1,1}(\mathbb{R})$. Then, $\mu^{0}_{-,1}, \mu^{0}_{+,2}$ are analytic in $C_{-}$ and $\mu^{0}_{-,2}, \mu^{0}_{+,1}$ are analytic in $C_{+}$. The $\mu^{0}_{\pm,j} (j=1,2)$ mean the $j$-th column of $\mu^{0}_{\pm}$.
  \item (Asymptotic property) The function $\mu^{0}_{\pm}(x,t;z)$ admit the following asymptotic expansions as $z\rightarrow0$,
      \begin{align}\label{2.5}
       \mu^{0}_{\pm}(x,t;z)=\mathbb{I}+\left(
                                         \begin{array}{cc}
                                           0 & iu(x,t) \\
                                           iu^{*}(x,t) & 0 \\
                                         \end{array}
                                       \right)z+O(z^{2}).
      \end{align}
\end{itemize}
\end{prop}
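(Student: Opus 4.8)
The plan is to read off both properties directly from the Volterra representations of $\mu^0_{\pm}$, equivalently from the $x$-part of the Lax pair \eqref{2.3}, by the method of successive approximations. For the analytic property I would expand each eigenfunction as a Neumann series and verify term-by-term analyticity together with uniform convergence on compact subsets of the relevant half-plane; for the asymptotic property I would insert the small-$z$ expansion into the integral equation and integrate the resulting perfect $x$-derivative.

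First I would treat the two columns separately. Writing out the conjugation $e^{iz(x-y)\hat\sigma_3}$ acting on $U_{2}\mu^0_{\pm}$ column by column, the scalar factor produced when $e^{-iz(x-y)\sigma_3}$ hits the column's basis vector combines with the diagonal exponential so that, within each column, the entry seeded by the identity carries no exponential while the coupled entry carries a single exponential $e^{\mp 2iz(x-y)}$. Concretely, for the first column $\phi=(\phi_1,\phi_2)^{T}$ of $\mu^0_{-}$ one gets $\phi_1(x)=1+iz\int_{-\infty}^{x}u_y\,\phi_2\,dy$ and $\phi_2(x)=iz\int_{-\infty}^{x}e^{-2iz(x-y)}u_y^{*}\,\phi_1\,dy$. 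Since the integration runs over $y<x$, the factor $e^{-2iz(x-y)}$ stays bounded precisely for $\mathrm{Im}\,z\le 0$, which singles out $C_{-}$; the analogous bookkeeping gives $C_{+}$ for $\mu^0_{-,2}$ and, after the sign of $x-y$ is reversed, $C_{+}$ for $\mu^0_{+,1}$ and $C_{-}$ for $\mu^0_{+,2}$. I would then define the Neumann iterates, note that each iterate is analytic in $z$ on the open half-plane (analytic integrand, bounded exponential), and bound the $n$-th iterate by $\tfrac{1}{n!}\big(\int_{\mathbb R}|u_y|\,dy\big)^{n}$ using the integrability of $u_x$ furnished by the $H^{1,1}$ hypothesis. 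The Weierstrass criterion then produces a limit that is analytic on the half-plane and continuous up to $\mathbb{R}$.

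For the asymptotic property I would exploit that $U_{2}=izU_{0x}=O(z)$ is a perfect $x$-derivative and that $e^{iz(x-y)\hat\sigma_3}=\mathbb{I}+O(z)$, $\mu^0_{\pm}=\mathbb{I}+O(z)$ as $z\to 0$. Substituting these into the integral equation, the integrand equals $izU_{0y}+O(z^{2})$, so $\mu^0_{-}(x)=\mathbb{I}+iz\int_{-\infty}^{x}U_{0y}(y)\,dy+O(z^{2})=\mathbb{I}+izU_{0}(x)+O(z^{2})$, the boundary term at $-\infty$ vanishing because $u\to 0$ there; the mirror computation for $\mu^0_{+}$ over $(x,+\infty)$ yields the same coefficient. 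Recalling the off-diagonal form of $U_{0}$, this is exactly \eqref{2.5}, with entries $iu$ and $iu^{*}$.

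The routine ingredients are the convergence estimate and the Fubini/fundamental-theorem manipulations; the one place demanding care is the column-wise bookkeeping of $e^{iz(x-y)\hat\sigma_3}$. It is tempting to attach the factor $e^{2iz(x-y)}$ to the $(1,2)$ entry of $U_{2}$ and thereby conclude that the diagonal entry of a column carries the indefinite exponential $e^{2iz(x-2y+y')}$ at second order, which would wrongly destroy half-plane analyticity; the correct treatment keeps the extra scalar $e^{-iz(x-y)}$ generated when the conjugation acts on the column's basis vector, after which the net exponential in each column has a single definite sign. Getting this cancellation right is the crux, and once it is in place the four analyticity assignments and the $z\to 0$ expansion both follow mechanically.
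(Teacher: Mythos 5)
Your proposal is correct and follows precisely the standard route that the paper itself takes for granted (the proposition is stated there without proof): column-wise Neumann iteration of the Volterra equations, where the careful bookkeeping of $e^{iz(x-y)\hat{\sigma}_{3}}$ yields a single definite-sign exponential per column and hence the four half-plane analyticity assignments, and the small-$z$ expansion using that $U_{2}=izU_{0x}$ is a perfect $x$-derivative, which integrates to $\mu^{0}_{\pm}=\mathbb{I}+izU_{0}(x)+O(z^{2})$, i.e.\ exactly \eqref{2.5}. One small caveat: $H^{1,1}(\mathbb{R})$ as defined gives $u_{x}\in L^{2}$ rather than $u_{x}\in L^{1}$, so your $\frac{1}{n!}\bigl(\int_{\mathbb{R}}|u_{y}|\,dy\bigr)^{n}$ bound quietly assumes an integrability of $u_{x}$ that the paper likewise leaves implicit.
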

\subsection{The case of singularity at z=$\infty$}
Considering the singularity at $z=\infty$, we need to control the asymptotic behavior of eigenfunctions as $z\rightarrow\infty$. Thus, following the idea in \cite{Xu-CSP-JDE}, we introduce the transformation
\begin{align}\label{2.6}
\psi(x,t;z)=G(x,t)\phi e^{izp(x,t;z)\sigma_{3}},
\end{align}
where
\begin{align*}
  G(x,t)=\sqrt{\frac{\sqrt{m(x,t)}+1}{2\sqrt{m(x,t)}}}\left(
                                              \begin{array}{cc}
                                                1 & -\frac{\sqrt{m(x,t)}-1}{u^{*}_{x}(x,t)} \\
                                                \frac{\sqrt{m(x,t)}-1}{u_{x}(x,t)} & 1 \\
                                              \end{array}
                                            \right),\\
  m(x,t)=1+|u_{x}|^{2},~~
  p(x,t;z)=x-\int_{x}^{\infty}(\sqrt{m(s,t)}-1)ds+\frac{t}{4z^{2}}.
\end{align*}
Consequently, the CSP equation \eqref{1.1} can be written in conservation law form
\begin{align*}
\left(\sqrt{m(x,t)}\right)_{t}=-\frac{1}{2}\left(|u(x,t)|^{2}\sqrt{m(x,t)}\right)_{x},
\end{align*}
and the derivative of function $p(x,t;z)$ with respect to $x$ and $t$ can be derived as
\begin{align}\label{2.7}
p_{x}(x,t;z)=\sqrt{m(x,t)},~~p_{t}(x,t;z)=-\frac{1}{2}|u(x,t)|^{2}\sqrt{m(x,t)}+\frac{1}{4z^{2}}.
\end{align}
Also the equivalent Lax pair of $\psi(x,t;z)$ \eqref{2.1} is transformed into
\begin{align}\label{2.8}
\begin{split}
\phi_{x}-izp_{x}[\sigma_{3},\phi]=U_{3}\phi,\\
\phi_{t}-izp_{t}[\sigma_{3},\phi]=V_{3}\phi,
\end{split}
\end{align}
where
\begin{align*}
U_{3}=&-\left(
         \begin{array}{cc}
           \frac{u_{x}u^{*}_{xx}-u_{xx}u^{*}_{x}}{4\sqrt{m}(\sqrt{m}+1)} & \frac{(\sqrt{m}-1)u_{x}u^{*}_{xx}-(\sqrt{m}+1)u_{xx}u^{*}_{x}}{4mu_{x}^{*}} \\
           \frac{(\sqrt{m}+1)u_{x}u^{*}_{xx}-(\sqrt{m}-1)u_{xx}u^{*}_{x}}{4mu_{x}^{*}} & -\frac{u_{x}u^{*}_{xx}-u_{xx}u^{*}_{x}}{4\sqrt{m}(\sqrt{m}+1)} \\
         \end{array}
       \right),\\
V_{3}=&-\frac{1}{4iz}\frac{1}{\sqrt{m}}\sigma_{3}+\frac{1}{4iz}\frac{1}{\sqrt{m}}\left(
                                                                       \begin{array}{cc}
                                                                        0 & u_{x} \\
                                                                        u_{x}^{*} & 0 \\
                                                                       \end{array}
                                                                                 \right)
+\frac{1}{4iz}\sigma_{3}\\
&-\frac{1}{4\sqrt{m}}\left(
                       \begin{array}{cc}
                         u^{*}u_{x}-uu_{x}^{*} & -\frac{(\sqrt{m}+1)uu^{*}_{x}+(\sqrt{m}-1)u_{x}u^{*}}{u_{x}^{*}} \\
                         \frac{(\sqrt{m}-1)uu^{*}_{x}+(\sqrt{m}+1)u_{x}u^{*}}{u_{x}} & -u^{*}u_{x}+uu_{x}^{*} \\
                       \end{array}
                     \right)\\
&-\left(
         \begin{array}{cc}
           \frac{u_{x}u^{*}_{xt}-u_{xt}u^{*}_{x}}{4\sqrt{m}(\sqrt{m}+1)} & \frac{(\sqrt{m}-1)u_{x}u^{*}_{xt}-(\sqrt{m}+1)u_{xt}u^{*}_{x}}{4mu_{x}^{*}} \\
           \frac{(\sqrt{m}+1)u_{x}u^{*}_{xt}-(\sqrt{m}-1)u_{xt}u^{*}_{x}}{4mu_{x}^{*}} & -\frac{u_{x}u^{*}_{xt}-u_{xt}u^{*}_{x}}{4\sqrt{m}(\sqrt{m}+1)} \\
         \end{array}
       \right).
\end{align*}
Based on the Lax pair \eqref{2.8}, it is not hard to verify that the solutions of spectral problem do not approximate the identity matrix as $z\rightarrow\infty$ which  will cause difficulties in constructing RHP. Therefore, we need to introduce an improved transformation
\begin{align}\label{2.9}
\psi(x,t;z)=G(x,t)e^{d_{-}\hat{\sigma}_{3}}\mu(x,t;z)e^{-d_{+}\sigma_{3}} e^{izp(x,t;z)\sigma_{3}},
\end{align}
where
\begin{gather*}
d_{-}=\int_{-\infty}^{x}\frac{u_{xx}u^{*}_{x}-u_{x}u^{*}_{xx}}{4\sqrt{m}(\sqrt{m}+1)}(s,t)ds,
~~d_{+}=\int^{+\infty}_{x}\frac{u_{xx}u^{*}_{x}-u_{x}u^{*}_{xx}}{4\sqrt{m}(\sqrt{m}+1)}(s,t)ds,\\
d=d_{+}+d_{-}=\int_{-\infty}^{+\infty} \frac{u_{xx}u^{*}_{x}-u_{x}u^{*}_{xx}}{4\sqrt{m}(\sqrt{m}+1)}(s,t)ds.
\end{gather*}
Then, the equivalent Lax pair of $\psi(x,t;z)$ \eqref{2.1} can be written as
\begin{align}\label{2.29}
\begin{split}
\mu_{x}-izp_{x}[\sigma_{3},\mu]=e^{-d_{-}\hat{\sigma}_{3}}U_{4}\mu,\\
\mu_{t}-izp_{t}[\sigma_{3},\mu]=e^{-d_{-}\hat{\sigma}_{3}}V_{4}\mu,
\end{split}
\end{align}
where
\begin{align*}
U_{4}=&-\left(
         \begin{array}{cc}
           0 & \frac{(\sqrt{m}-1)u_{x}u^{*}_{xx}-(\sqrt{m}+1)u_{xx}u^{*}_{x}}{4mu_{x}^{*}} \\
           \frac{(\sqrt{m}+1)u_{x}u^{*}_{xx}-(\sqrt{m}-1)u_{xx}u^{*}_{x}}{4mu_{x}^{*}} & 0 \\
         \end{array}
       \right),\\
V_{4}=&-\frac{1}{4iz}(\frac{1}{\sqrt{m}}-1)\sigma_{3}+\frac{1}{4iz}\frac{1}{\sqrt{m}}\left(
                                                                       \begin{array}{cc}
                                                                        0 & u_{x} \\
                                                                        u_{x}^{*} & 0 \\
                                                                       \end{array}
                                                                                 \right)\\
&-\frac{1}{4\sqrt{m}}\left(
                       \begin{array}{cc}
                         0 & -\frac{(\sqrt{m}+1)uu^{*}_{x}+(\sqrt{m}-1)u_{x}u^{*}}{u_{x}^{*}} \\
                         \frac{(\sqrt{m}-1)uu^{*}_{x}+(\sqrt{m}+1)u_{x}u^{*}}{u_{x}} & 0 \\
                       \end{array}
                     \right)\\
&-\left(
         \begin{array}{cc}
           0 & \frac{(\sqrt{m}-1)u_{x}u^{*}_{xt}-(\sqrt{m}+1)u_{xt}u^{*}_{x}}{4mu_{x}^{*}} \\
           \frac{(\sqrt{m}+1)u_{x}u^{*}_{xt}-(\sqrt{m}-1)u_{xt}u^{*}_{x}}{4mu_{x}^{*}} & 0 \\
         \end{array}
       \right).
\end{align*}
Furthermore, Eq.\eqref{2.29} can be written in full derivative form
\begin{align}\label{fullDerivative}
d(e^{-izp(x,t;z)\hat{\sigma}_{3}}\mu)=e^{-izp(x,t;z)\hat{\sigma}_{3}} e^{-d_{-}\hat{\sigma}_{3}}(U_{4}dx+V_{4}dt)m,
\end{align}
from which we can derive two Volterra type integrals
\begin{align}\label{2.10}
\begin{matrix}
\mu_{-}(x,t;z)=\mathbb{I}+
\int_{x}^{-\infty}e^{iz[p(x,t;z)-p(s,t;z)]\hat{\sigma}_{3}}e^{-d_{-}\hat{\sigma}_{3}}
U_{4}(s,t;z)\mu_{-}(s,t;z)ds,\\
\mu_{+}(x,t;z)=\mathbb{I}-
\int_{x}^{+\infty}e^{iz[p(x,t;z)-p(s,t;z)]\hat{\sigma}_{3}}e^{-d_{-}\hat{\sigma}_{3}}
U_{4}(y,t;z)\mu_{+}(s,t;z)ds.
\end{matrix}
\end{align}
Based on the definition of $\mu(x,t;z)$ and the above integrals \eqref{2.10}, we can derive the properties of $\mu(x,t;z)$ including analytic, symmetry and asymptotic behavior properties.
\begin{prop}\label{2-1}
The properties of $\mu(x,t;z)$:
\begin{itemize}
  \item (Analytic property)
  It is assumed that $u(x)-u_{0}\in H^{1,1}(\mathbb{R})$. Then, $\mu_{-,1}, \mu_{+,2}$ are analytic in $C_{-}$ and $\mu_{-,2}, \mu_{+,1}$ are analytic in $C_{+}$. The $\mu_{\pm,j} (j=1,2)$ mean the $j$-th column of $\mu_{\pm}$.
  \item (Symmetry property)
  The symmetry of the eigenfunctions $\mu_{\pm}(x,t;z)$ can be shown as
  \begin{align}\label{2.11}
    \mu^{*}_{\pm}(x,t;z^{*})=\sigma_{2}\mu_{\pm}(x,t;z)\sigma_{2},
  \end{align}
  where $\sigma_{2}=\left(
                      \begin{array}{cc}
                        0 & -i \\
                        i & 0 \\
                      \end{array}
                    \right)$.
  \item (Asymptotic property for $z\rightarrow\infty$)
  The function $\mu_{\pm}(x,t;z)$ admit the following asymptotic expansions as $z\rightarrow\infty$,
      \begin{align}\label{2.12}
       \mu_{\pm}(x,t;z)=\mathbb{I}+O(z^{-1}).
      \end{align}
\end{itemize}
\end{prop}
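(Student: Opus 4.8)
The plan is to read off all three properties directly from the Volterra representation \eqref{2.10}, exploiting the monotonicity of the phase $p$ for analyticity and the algebraic symmetry of the off-diagonal coefficient $U_{4}$ for the symmetry relation, with the large-$z$ decay coming from an integration by parts. First I would treat analyticity. By \eqref{2.7} one has $p_{x}=\sqrt{m}\ge 1$, so the phase difference
\[
p(x,t;z)-p(s,t;z)=\int_{s}^{x}\sqrt{m(\xi,t)}\,d\xi
\]
is independent of $z$ and has a definite sign: it is nonnegative for $s\le x$ and nonpositive for $s\ge x$. Setting up the Neumann series for $\mu_{-}$ from \eqref{2.10} and using that the adjoint action $e^{iz[p(x)-p(s)]\hat{\sigma}_{3}}$ turns the off-diagonal $U_{4}$ into entries carrying $e^{\pm 2iz[p(x)-p(s)]}$, I would check that, after the outer conjugation is accounted for, the only exponentials controlling the first column are the sign-definite ones $e^{-2iz[p(x)-p(s)]}$ with $p(x)-p(s)\ge 0$; these are bounded and analytic for $z\in C_{-}$. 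The standard Volterra estimate then bounds the $n$-th term by $\tfrac{1}{n!}\big(\int|e^{-d_{-}\hat{\sigma}_{3}}U_{4}|\big)^{n}$, with $\|U_{4}\|_{L^{1}}<\infty$ following from $u-u_{0}\in H^{1,1}(\mathbb{R})$, so the series converges absolutely and uniformly on compact subsets of $C_{-}$ and the limit is analytic there. This gives $\mu_{-,1}$ analytic in $C_{-}$; the same argument with the opposite integration limit and with the second column yields that $\mu_{+,2}$ is analytic in $C_{-}$ and $\mu_{-,2},\mu_{+,1}$ are analytic in $C_{+}$.

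For the symmetry \eqref{2.11} I would argue by uniqueness. Define $\nu(x,t;z):=\sigma_{2}\,\overline{\mu_{\pm}(x,t;\bar z)}\,\sigma_{2}$ and show that it satisfies the very same integral equation \eqref{2.10} as $\mu_{\pm}(x,t;z)$. Two ingredients are needed. The first is the exponential identity $\sigma_{2}\,\overline{e^{i\bar z[p(x)-p(s)]\hat{\sigma}_{3}}(\cdot)}\,\sigma_{2}=e^{iz[p(x)-p(s)]\hat{\sigma}_{3}}\big(\sigma_{2}\,\overline{(\cdot)}\,\sigma_{2}\big)$, which uses $\sigma_{2}\sigma_{3}\sigma_{2}=-\sigma_{3}$ together with the fact that $p$ is real. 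The second is the coefficient symmetry $\sigma_{2}\,\overline{U_{4}}\,\sigma_{2}=U_{4}$, which reduces to the two scalar relations between the off-diagonal entries of $U_{4}$ (conjugating one entry reproduces minus the other); these follow by inspection from the explicit formulas for $U_{4}$. Uniqueness of the solution of the Volterra equation then forces $\nu\equiv\mu_{\pm}$, which is exactly \eqref{2.11}.

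Finally, for the behaviour as $z\to\infty$ in \eqref{2.12} I would extract the leading term of the Neumann series. Since $U_{4}$ is off-diagonal and $z$-independent, the first correction to $\mathbb{I}$ is an oscillatory integral of the form $\int e^{\mp 2iz[p(x)-p(s)]}g(s)\,ds$ with $\partial_{s}\big(p(x)-p(s)\big)=-\sqrt{m}\le -1$ bounded away from zero. One integration by parts, legitimate because the $H^{1,1}(\mathbb{R})$ regularity of the potential makes $g$ and $g'$ integrable, produces a factor $z^{-1}$; the higher Neumann terms are smaller, so $\mu_{\pm}=\mathbb{I}+O(z^{-1})$.

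The main obstacle I anticipate is the bookkeeping of the exponentials in the analyticity step: one must verify that, once the adjoint action $e^{iz[p(x)-p(s)]\hat{\sigma}_{3}}$ on the off-diagonal $U_{4}$ and the outer normalization are combined, every exponential controlling a fixed column is sign-definite and hence half-plane bounded, so that the naive term-by-term estimate genuinely closes rather than producing growing contributions. The secondary technical points are confirming the precise algebraic identity $\sigma_{2}\,\overline{U_{4}}\,\sigma_{2}=U_{4}$ from the somewhat lengthy explicit entries of $U_{4}$, and ensuring that the decay and regularity inherited from $u_{0}\in H^{1,1}(\mathbb{R})$ place $U_{4}$ in $L^{1}(\mathbb{R})$ so that all the Volterra estimates are finite.
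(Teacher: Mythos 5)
Your proposal is correct and follows essentially the same route the paper intends: the paper states Proposition \ref{2-1} without any written proof, asserting only that the properties ``can be derived'' from the Volterra integrals \eqref{2.10}, and your argument --- Neumann-series convergence with the sign-definite, $z$-independent phase $p(x)-p(s)=\int_{s}^{x}\sqrt{m}\,d\xi$ for column-wise analyticity, symmetry via uniqueness of the Volterra solution (which also uses that $d_{-}$ is purely imaginary, so $e^{-d_{-}\hat{\sigma}_{3}}$ passes through the $\sigma_{2}$-conjugation), and integration by parts for the $O(z^{-1})$ expansion --- is precisely the standard derivation being invoked. One remark: as printed in the paper both off-diagonal entries of $U_{4}$ carry the denominator $4mu_{x}^{*}$, under which your key identity $\sigma_{2}\overline{U_{4}}\sigma_{2}=U_{4}$ would literally fail; the $(2,1)$ denominator is evidently a typo for $4mu_{x}$ (compare the structure of $G$), and with that correction your ``by inspection'' verification goes through.
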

\subsection{The scattering matrix}
Considering the fact that the eigenfunctions $\mu_{\pm}(x,t;z)$ are two fundamental matrix solutions of Eq.\eqref{2.29} for $z\in \mathbb{R}$, there exists a matrix $S(z)$ that leads to
\begin{align}\label{2.13}
\mu_{-}(x,t;z)=\mu_{+}(x,t;z)e^{izp(x,t;z)\hat{\sigma}_{3}}S(z),
\end{align}
where $S(z)=(s_{ij}(z))~(i,j=1,2)$ is independent of the variable $x$ and $t$.
Based on the Abel's theorem %\cite{Li-1}
and the properties of $\mu_{\pm}(x,t;z)$ that shown in Proposition \ref{2-1}, the properties of $S(z)$ can be derived.
\begin{prop}\label{2-2}
The properties of $S(z)$:
\begin{itemize}
  \item (Analytic property)
  $s_{11}$ is analytic in $\mathbb{C}^{-}$, and $s_{22}$ is analytic in $\mathbb{C}^{+}$.
  \item (Symmetry property)
  The symmetry of the elements of the scattering matrix $S(z)$ can be shown as
  \begin{align}\label{2.21}
    s_{11}(z)=s^{*}_{22}(z^{*}),~~s_{12}(z)=-s^{*}_{21}(z^{*}).
  \end{align}
  \item (Asymptotic property for $z\rightarrow\infty$)
  The element $s_{11}(z)$ admit the following asymptotic expansions as $z\rightarrow0$,
      \begin{align}\label{2.22}
       s_{22}(z)=e^{d}\left(1+izc-\frac{c^{2}}{2}z^{2}+O(z^{3})\right).
      \end{align}
\end{itemize}
\end{prop}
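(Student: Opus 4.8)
The plan is to extract all three properties directly from the scattering relation \eqref{2.13}, after first recording that both Jost solutions are unimodular. Since the original potential $U=izU_{1}$ in \eqref{2.1} is traceless ($\sigma_{3}$ is traceless and $U_{0x}$ is off-diagonal) and the gauge matrix satisfies $\det G=1$, Abel's theorem shows that $\det\psi$ is independent of $x$; evaluating at the relevant spatial infinity where $\mu_{\pm}\to\mathbb{I}$ yields $\det\mu_{\pm}(x,t;z)\equiv1$, and hence $\det S(z)=1$ from \eqref{2.13}. Solving \eqref{2.13} for $S$ gives $S=e^{-izp\sigma_{3}}\mu_{+}^{-1}\mu_{-}e^{izp\sigma_{3}}$, and because conjugation by the diagonal matrix $e^{\pm izp\sigma_{3}}$ fixes the diagonal entries, I obtain the Wronskian representations $s_{11}=\det(\mu_{-,1},\mu_{+,2})$ and $s_{22}=\det(\mu_{+,1},\mu_{-,2})$, using $\mu_{+}^{-1}=\mathrm{adj}(\mu_{+})$. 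By the analyticity in Proposition \ref{2-1}, $\mu_{-,1}$ and $\mu_{+,2}$ are analytic in $\mathbb{C}^{-}$, while $\mu_{+,1}$ and $\mu_{-,2}$ are analytic in $\mathbb{C}^{+}$; therefore $s_{11}$ extends analytically to $\mathbb{C}^{-}$ and $s_{22}$ to $\mathbb{C}^{+}$, which is the first claim.

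For the symmetry I would insert the eigenfunction symmetry \eqref{2.11} into $S=e^{-izp\sigma_{3}}\mu_{+}^{-1}\mu_{-}e^{izp\sigma_{3}}$. Since $p$ has real coefficients, $p^{*}(x,t;z^{*})=p(x,t;z)$, and applying $*$ together with $z\mapsto z^{*}$ turns $\mu_{\pm}$ into $\sigma_{2}\mu_{\pm}\sigma_{2}$ and their inverses likewise. Using $\sigma_{2}^{2}=\mathbb{I}$ and the anticommutation $\sigma_{2}e^{a\sigma_{3}}=e^{-a\sigma_{3}}\sigma_{2}$, all gauge exponentials cancel in pairs and the relation collapses to $S^{*}(z^{*})=\sigma_{2}S(z)\sigma_{2}$. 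Since $\sigma_{2}S\sigma_{2}$ has diagonal entries $(s_{22},s_{11})$ and off-diagonal entries $(-s_{21},-s_{12})$, comparison with $S^{*}(z^{*})$ yields precisely $s_{11}(z)=s_{22}^{*}(z^{*})$ and $s_{12}(z)=-s_{21}^{*}(z^{*})$, the second claim.

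The delicate step is the behaviour of $s_{22}$ as $z\to0$, since this limit sits at the \emph{other} singularity of the Lax pair and so must be reached through the $z=0$ gauge \eqref{2.2} rather than the $z=\infty$ gauge in which $S$ was defined. I would first connect the $z=\infty$ gauge eigenfunctions to the $z=0$ gauge eigenfunctions $\mu^{0}_{\pm}$ through the gauge factors $G$, $e^{d_{-}\hat{\sigma}_{3}}$ and $e^{-d_{+}\sigma_{3}}$ relating \eqref{2.2} and \eqref{2.9}, by matching the spatial asymptotics at $x\to\pm\infty$. In the limit $x\to-\infty$ one has $G\to\mathbb{I}$, $d_{-}\to0$, $d_{+}\to d$, so the improved gauge contributes exactly the prefactor $e^{d}$, while the mismatch between $e^{izp\sigma_{3}}$ and $e^{i(zx+\frac{1}{4iz}t)\sigma_{3}}$ produces the phase $e^{izc}$ carried by the $x$-independent part of $p$, with $c=\int_{\mathbb{R}}(\sqrt{m}-1)\,ds$. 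Combining these with the small-$z$ expansion \eqref{2.5}, in which the $O(z)$ term of $\mu^{0}_{\pm}$ is purely off-diagonal and hence contributes nothing to the diagonal coefficient $s_{22}$ at first order, gives $s_{22}(z)=e^{d}\big(1+izc-\tfrac{c^{2}}{2}z^{2}+O(z^{3})\big)$. The main obstacle is precisely the bookkeeping in this last step: carefully aligning the two inequivalent normalizations, tracking which factors of $G$, $e^{d_{\pm}\hat{\sigma}_{3}}$ and $e^{izp\sigma_{3}}$ survive each limit, and verifying that the quadratic coefficient is forced to be $-c^{2}/2$ (i.e.\ that the expansion is the beginning of $e^{d+izc}$) rather than an independent constant. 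By contrast the analyticity and symmetry parts are routine consequences of Proposition \ref{2-1} and \eqref{2.13}.
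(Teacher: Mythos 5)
Your treatment of the first two bullets coincides with what the paper actually does: the paper's entire ``proof'' of Proposition~\ref{2-2} is the remark preceding it that everything follows from Abel's theorem and Proposition~\ref{2-1}, and your Wronskian representations $s_{11}=\det(\mu_{-,1},\mu_{+,2})$, $s_{22}=\det(\mu_{+,1},\mu_{-,2})$ (via $\det\mu_{\pm}\equiv1$ and the fact that conjugation by $e^{izp\sigma_{3}}$ fixes diagonal entries) together with the identity $S^{*}(z^{*})=\sigma_{2}S(z)\sigma_{2}$ deduced from \eqref{2.11} are exactly the standard filling-in of that remark. Likewise, your route to the $z\to0$ expansion through the gauge connection is the intended one: the matching constants $C_{+}=\mathbb{I}$, $C_{-}=e^{-d\sigma_{3}}e^{-izc\sigma_{3}}$, i.e.\ formula \eqref{2.14}, give precisely $s_{22}(z)=e^{d}e^{izc}\det\bigl(\mu^{0}_{+,1}(x,t;z),\mu^{0}_{-,2}(x,t;z)\bigr)$, the determinant being $x$-independent.

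There is, however, a genuine gap at exactly the step you flag as ``the main obstacle'' and then assert rather than prove: the coefficient $-c^{2}/2$ at order $z^{2}$. The expansion \eqref{2.5} specifies $\mu^{0}_{\pm}$ only through the (off-diagonal) $O(z)$ term and is silent about the diagonal $O(z^{2})$ entries, and those enter the Wronskian at precisely the order in question. Writing $\mu^{0}_{\pm}=\mathbb{I}+z\mu^{(1)}+z^{2}\mu^{(2)}_{\pm}+\cdots$, one gets $\det(\mu^{0}_{+,1},\mu^{0}_{-,2})=1+\beta z^{2}+O(z^{3})$ with $\beta=|u|^{2}+\mu^{(2)}_{+,11}+\mu^{(2)}_{-,22}$, and the Lax pair \eqref{2.3} at order $z^{2}$ yields $(\mu^{(2)}_{\pm,11})_{x}=-u_{x}u^{*}$ and $(\mu^{(2)}_{\pm,22})_{x}=-u^{*}_{x}u$, whence $\mu^{(2)}_{+,11}=\int_{x}^{+\infty}u_{y}u^{*}\,dy$, $\mu^{(2)}_{-,22}=-\int_{-\infty}^{x}u^{*}_{y}u\,dy$, so that $\beta=-\int_{\mathbb{R}}u^{*}_{y}u\,dy$: this is indeed $x$-independent, but it is purely imaginary and generically \emph{nonzero}. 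Consequently what your argument actually establishes is $s_{22}(z)=e^{d+izc}\bigl(1+\beta z^{2}+O(z^{3})\bigr)$, i.e.\ the quadratic coefficient is $-c^{2}/2+\beta$, and the expansion is \emph{not} in general the beginning of $e^{d+izc}$; your observation that the $O(z)$ term of $\mu^{0}_{\pm}$ is off-diagonal controls only the linear coefficient $izc$. To be fair, the paper asserts \eqref{2.22} with the same unjustified coefficient, and the $z^{2}$ term is never used in the subsequent long-time analysis (only $T(z)$, the residue data, and the $O(z)$ behavior of $M$ at the origin enter), so your proof is complete to exactly the order the paper needs; but as a proof of the stated $-c^{2}/2$ claim it fails, and either $\beta=0$ must be shown for the class of potentials considered (it is not true in general) or the statement must be weakened to $s_{22}(z)=e^{d}\bigl(1+izc+O(z^{2})\bigr)$.
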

\subsection{The connection between $\mu_{\pm}(x,t;z)$ and $\mu^{0}_{\pm}(x,t;z)$}
In the following analysis, we will use the eigenfunctions $\mu_{\pm}(x,t;z)$ to construct the matrix $M(x,t;z)$ and further formulate a RHP. It is worth noting that the asymptotic behavior of $\mu_{\pm}(x,t;z)$ as $z\rightarrow0$ plays an important role in constructing the solution $u(x,t)$. Thus, the connection between $\mu_{\pm}(x,t;z)$ and $\mu^{0}_{\pm}(x,t;z)$ is necessary.

From Eq.\eqref{2.2} and Eq.\eqref{2.9}, we can derive that
\begin{align}\label{2.23}
\mu_{\pm}(x,t;z)=e^{-d_{-}\sigma_{3}}G^{-1}\mu^{0}_{\pm}(x,t;z)
e^{i(zx+\frac{1}{4z}t)\sigma_{3}}C_{\pm}(z)e^{-izp(x,t;z)\sigma_{3}}e^{d\sigma_{3}},
\end{align}
where $C_{\pm}(z)$ are independent of $x$ and $t$.
Let $x\rightarrow\infty$, from Eq.\eqref{2.23},  $C_{\pm}(z)$ can be solved as
\begin{align*}
C_{+}(z)=\mathbb{I},~~ C_{-}(z)=e^{-d\sigma_{3}}e^{-izc\sigma_{3}},
\end{align*}
where $c=\int^{+\infty}_{-\infty}(\sqrt{m(x,t)}-1)dx$ is a quantity conserved under the dynamics governed by Eq.\eqref{1.1}. Then, the connection between $\mu_{\pm}(x,t;z)$ and $\mu^{0}_{\pm}(x,t;z)$ can be obtained as
\begin{align}\label{2.14}
\begin{split}
\mu_{-}(x,t;z)=e^{-d_{-}\sigma_{3}}G^{-1}(x,t)\mu^{0}_{-}(x,t;z)
e^{-iz\int^{x}_{-\infty}(\sqrt{m(s,t)}-1)ds\sigma_{3}},\\
\mu_{+}(x,t;z)=e^{-d_{-}\sigma_{3}}G^{-1}(x,t)\mu^{0}_{+}(x,t;z)
e^{iz\int^{+\infty}_{x}(\sqrt{m(s,t)}-1)ds\sigma_{3}}e^{d\sigma_{3}}.
\end{split}
\end{align}

\section{The formulation of a RHP}
\begin{assum}\label{assum}
In the following analysis, we make the assumption to avoid the many pathologies possible, i.e.,
\begin{itemize}
  \item For $z\in\mathbb{R}$, no spectral singularities exist, i.e, $s_{22}(z)\neq0$;
  \item Suppose that $s_{22}(z)$ possesses $N$ zero points, denoted as $\mathcal{Z}=\{(z_{j},Im~z_{j}>0)^{N}_{j=1}\}$.
  \item The discrete spectrum is simple, i.e., if $z_{0}$ is the zero of $s_{22}(z)$, then $s'_{22}(z_{0})\neq0$.
\end{itemize}
\end{assum}

Now, we introduce a sectionally  meromorphic matrices
\begin{align}\label{Matrix}
\tilde{M}(x,t;z)=\left\{\begin{aligned}
&\tilde{M}^{+}(x,t;z)=\left(\mu_{+,1}(x,t;z),\frac{\mu_{-,2}(x,t;z)}{s_{22}(z)}\right), \quad z\in \mathbb{C}^{+},\\
&\tilde{M}^{-}(x,t;z)=\left(\frac{\mu_{-,1}(x,t;z)}{s_{11}(z)},\mu_{+,2}(x,t;z)\right), \quad z\in \mathbb{C}^{-},
\end{aligned}\right.
\end{align}
where $\tilde{M}^{\pm}(x,t;z)=\lim\limits_{\varepsilon\rightarrow0^{+}}\tilde{M}(x,t;z\pm i\varepsilon),~\varepsilon\in\mathbb{R}$, and  reflection coefficients
\begin{gather}
r(z)=\frac{s_{12}(z)}{s_{22}(z)},~~
 \frac{s_{21}(z)}{s_{11}(z)}=-\frac{s^{*}_{12}(z^{*})}{s^{*}_{22}(z^{*})}
 =-r^{*}(z^{*})=-r^{*}(z),~~z\in\mathbb{R}.
\end{gather}

Based on the above analysis, the matrix function $\tilde{M}(x,t;z)$ admits the following matrix RHP.
\begin{RHP}\label{RH-1}
Find an analysis function $\tilde{M}(x,t;z)$ with the following properties:
\begin{itemize}
  \item $\tilde{M}(x,t;z)$ is meromorphic in $C\setminus\mathbb{R}$;
  \item $\tilde{M}^{*}(x,t;z^{*})=\sigma_{2}\tilde{M}(x,t;z)\sigma_{2}$;
  \item $\tilde{M}^{+}(x,t;z)=\tilde{M}^{-}(x,t;z)V(x,t;z)$,~~~$z\in\mathbb{R}$,
  where \begin{align}\label{J-Matrix-1}
V(x,t;z)=\left(\begin{array}{cc}
                   1 & r(z)e^{2izp} \\
                   r^{*}(z)e^{-2izp} & 1+|r(z)|^{2}
                 \end{array}\right);
\end{align}
  \item $\tilde{M}(x,t;z)=\mathbb{I}+O(z^{-1})$ as $z\rightarrow\infty$;
\end{itemize}
\end{RHP}
\begin{rem}
By referring to the Zhou's vanishing lemma, the existence of the solutions of RHP \ref{RH-1} for $(x,t)\in\mathbb{R}^{2}$ is guaranteed. According to a consequence of Liouville's theorem, we know that if a solution exists, it is unique.
\end{rem}

Next, in order to reconstruct the solution $u(x,t)$, the asymptotic behavior of $\tilde{M}(x,t;z)$ as $z\rightarrow 0$ need to be taken into account, i.e.,
\begin{align}\label{3.1}
\tilde{M}(x,t;z)=e^{-d_{-}\sigma_{3}}G^{-1}(x,t)\left[\mathbb{I}+z\left(ic_{+}\sigma_{3}
+i\left(
  \begin{array}{cc}
    0 & u \\
    u^{*} & 0 \\
  \end{array}
\right)\right)+O(z^{2})\right]e^{d\sigma_{3}},~~z\rightarrow0,
\end{align}
where $c_{+}(x,t)=\int^{+\infty}_{x}(\sqrt{m(s,t)}-1)ds$.
Since $p(x,t;z)$ that appears in jump matrix \eqref{J-Matrix} is not clear, it is still hard to obtain the solution $u(x,t)$. Thus, introducing a transformation
\begin{align}\label{3.2}
y(x,t)=x-\int^{+\infty}_{x}(\sqrt{m(s,t)}-1)ds=x-c_{+}(x,t),
\end{align}
the jump matrix can be expressed explicitly. However, we can just obtain the solution $u(x,t)$ only in implicit form: it will be given in terms of functions in the new scale, whereas the original scale will also be given in terms of functions in the new scale. We further define that \begin{align*}
       \tilde{M}(x,t;z)=M(y(x,t),t;z),
     \end{align*}
then, the $M(y(x,t),t;z)$ admits the following matrix RHP.
\begin{RHP}\label{RH-2}
Find an analysis function $M(y,t;z)$ with the following properties:
\begin{itemize}
  \item $M(y,t;z)$ is meromorphic in $C\setminus\mathbb{R}$;
  \item $M^{*}(y,t;z^{*})=\sigma_{2}M(x,t;z)\sigma_{2}$;
  \item $M^{+}(y,t;z)=M^{-}(y,t;z)V(x,t;z)$,~~~$z\in\mathbb{R}$,
  where \begin{align}\label{J-Matrix}
V(x,t;z)=e^{i\left(zy+\frac{t}{4z}\right)\hat{\sigma}_{3}}\left(\begin{array}{cc}
                   1 & r(z) \\
                   r^{*}(z) & 1+|r(z)|^{2}
                 \end{array}\right);
\end{align}
  \item $M(y,t;z)=\mathbb{I}+O(z^{-1})$ as $z\rightarrow\infty$;
\end{itemize}
\end{RHP}
Based on the Assumption \ref{assum}, Eq.\eqref{2.23} and Proposition \ref{2-1} , there exists norming constants $b_{j}$ such that
\begin{equation}
\mu_{-,2}(z_{j})=b_{j}e^{2i(z_{j}y+\frac{t}{4z_{j}})}\mu_{+,1}(z_{j});~
\mu_{-,1}(z^{*}_{j})=-b^{*}_{j}e^{-2i(z^{*}_{j}y+\frac{t}{4z^{*}_{j}})}\mu_{+,2}(z^{*}_{j}),\notag
\end{equation}
Then, the residue condition of $M(y,t;z)$ can be shown as
\begin{align}\label{2.32}
\mathop{Res}_{z=z_{j}}M=\lim_{z\rightarrow z_{j}}M\left(\begin{array}{cc}
                   0 & c_{j}e^{2i(z_{j}y+\frac{t}{4z_{j}})} \\
                   0 & 0
                 \end{array}\right),~~
\mathop{Res}_{z=z^{*}_{j}}M=\lim_{z\rightarrow z^{*}_{j}}M\left(\begin{array}{cc}
                   0 & 0 \\
                   -c^{*}_{j}e^{-2i(z^{*}_{j}y+\frac{t}{4z^{*}_{j}})} & 0
                 \end{array}\right).
\end{align}
where $c_{j}=\frac{b_{j}}{s'_{22}(z_{j})}$.

In terms of the solution of the RHP \ref{RH-2}, Proposition \ref{2-1} and Eq.\eqref{3.1}, the solution $u(x,t)$ can be derived as $u(x,t)=u(y(x,t),t)$, where
\begin{align}\label{u-sol}
\begin{split}
e^{-2d}u(y,t)=\lim_{z\rightarrow0}\frac{\left(M^{-1}(y,t;0)M(y,t;z)\right)_{12}}{iz},\\
x(y,t)=y+\lim_{z\rightarrow0}\frac{\left(M^{-1}(y,t;0)M(y,t;z)\right)_{11}-1}{iz}.
\end{split}
\end{align}

\section{Conjugation}\label{section-Conjugation}
In this section, our main purpose is to re-normalize the Riemann-Hilbert problem\eqref{RH-2}. Therefore, we will establish a transformation $M\mapsto M^{(1)}$ by introducing a function.

In jump matrix \eqref{J-Matrix}, the oscillation term is $e^{2i(zy+\frac{t}{4z})}$ which can be denoted as
\begin{align}\label{4.1}
e^{2i(zy+\frac{t}{4z})}=e^{2it\theta(z)},~~ \theta(z)=\frac{zy}{t}+\frac{1}{4z}.
\end{align}
Next, the phase points of $\theta(z)$ can be derived which can be denoted as $\pm z_{0}$ where
$z_{0}=\sqrt{\frac{t}{4y}}$. For the case that $\frac{t}{4y}<0$, the solution $u(x,t)$ of the initial problem \eqref{1.1} and \eqref{1.3} tends to $0$ fast decay as $t\rightarrow\infty$\cite{Xu-CSP-JDE}. Thus, we mainly pay attention to the case that $\frac{t}{4y}>0$.
Furthermore, $\theta(z)$ can be written as
\begin{align}\label{theta}
\theta(z)=\frac{z}{4}\left(\frac{1}{z_{0}^{2}}+\frac{1}{z^{2}}\right),
\end{align}
from which we can derive that
\begin{align}\label{4.2}
Re(2it\theta(z))=-2tIm~z\frac{|z|^{2}-z_{0}^{2}}{4z_{0}^{2}|z|^{2}}.
\end{align}

Then, we derive the decaying domains of the oscillation term.\\

\centerline{\begin{tikzpicture}[scale=0.7]
\path [fill=pink] (-1,0) -- (-9,0) to (-9,3) -- (-1,3);
\filldraw[white, line width=0.5](-3,0) arc (0:180:2);
\filldraw[pink, line width=0.5](-7,0) arc (-180:0:2);
\draw[->][thick](-9,0)--(-1,0);
\draw[fill] (-1,0)node[right]{$Rez$};
\draw[fill] (-5,0)node[below]{$0$} circle [radius=0.08];
\draw[fill] (-7.4,0)node[below]{$-z_{0}$};
\draw[fill] (-2.6,0)node[below]{$z_{0}$};
\draw[fill] (-7,0)circle [radius=0.08];
\draw[fill] (-3,0)circle [radius=0.08];
\draw[fill] (-7.5,1.5)node[above]{$|e^{2it\theta(z)}|\rightarrow\infty$};
\draw[fill] (-5,0)node[above]{$|e^{2it\theta(z)}|\rightarrow0$};
\draw[fill] (-5,-0.5)node[below]{$|e^{2it\theta(z)}|\rightarrow\infty$};
\draw[fill] (-7.5,-1.5)node[below]{$|e^{2it\theta(z)}|\rightarrow0$};
\draw[fill] (-5,3)node[above]{$t\rightarrow-\infty$};
\path [fill=pink] (1,0) -- (9,0) to (9,-3) -- (1,-3);
\filldraw[pink, line width=0.5](7,0) arc (0:180:2);
\filldraw[white, line width=0.5](3,0) arc (-180:0:2);
\draw[->][thick](1,0)--(9,0);
\draw[fill] (9,0)node[right]{$Rez$};
\draw[fill] (5,0)node[below]{0} circle [radius=0.08];
\draw[fill] (7.4,0)node[below]{$z_{0}$} ;
\draw[fill] (2.6,0)node[below]{$-z_{0}$} ;
\draw[fill] (7,0)circle [radius=0.08];
\draw[fill] (3,0)circle [radius=0.08];
\draw[fill] (7.5,1.5)node[above]{$|e^{2it\theta(z)}|\rightarrow0$};
\draw[fill] (5,0)node[above]{$|e^{2it\theta(z)}|\rightarrow\infty$};
\draw[fill] (5,-0.5)node[below]{$|e^{2it\theta(z)}|\rightarrow0$};
\draw[fill] (7.5,-1.5)node[below]{$|e^{2it\theta(z)}|\rightarrow\infty$};
\draw[fill] (5,3)node[above]{$t\rightarrow+\infty$};
%\draw[-][thick](-3,0)--(-2,0);
%\draw[-][thick](-2,0)--(-1,0);
%\draw[-][thick](-1,0)--(0,0);
%\draw[-][thick](0,0)--(1,0);
\draw(-5,0) [black, line width=1] circle(2);
\draw(5,0) [black, line width=1] circle(2);
%\draw[fill] (3,1)node[below]{$\partial \mathcal {U}_{z_{0}}$};
%\draw[fill] (2,3)node[above]{$\Sigma^{(2)}\setminus\mathcal {U}_{z_{0}}$};
%\draw[->][black, line width=0.8] (2,0) arc(0:-270:2);
\end{tikzpicture}}
\centerline{\noindent {\small \textbf{Figure 1.} Exponential decaying domains.}}

To make the following analysis more convenient, we introduce some notations.
\begin{align}\label{4.3}
\begin{aligned}
&\triangle^{+}_{z_{0},1}=\triangle^{-}_{z_{0},-1}=\{k\in\{1,\cdots,N\}|z_{k}|<z_{0}\},\\
&\triangle^{-}_{z_{0},1}=\triangle^{+}_{z_{0},-1}=\{k\in\{1,\cdots,N\}|z_{k}|>z_{0}\},
\end{aligned}
\end{align}
where the subscript $\eta=\pm1$ is defined by $\eta=sgn(t)$.
\begin{align}\label{4.4}
I_{+}=(-\infty,-z_{0})\cup(z_{0},+\infty),~~I_{-}=[-z_{0},z_{0}].
\end{align}

In the following analysis, we mainly pay attention to the case that $t\rightarrow+\infty$, and the case $t\rightarrow-\infty$ can be analyzed in a similarly way.

In order to re-normalize the Riemann-Hilbert problem\eqref{RH-2}, we first introduce the following function
\begin{align*}
\delta(z)=\exp\left[i\int_{-z_{0}}^{z_{0}}\frac{\nu(s)}{s-z}ds\right],
~~\nu(s)=-\frac{1}{2\pi}\log(1+|r(s)|^{2}).
\end{align*}
and
\begin{align}\label{4.5}
T(z)=\prod_{k\in\triangle_{z_{0},1}^{+}}\frac{z-z^{*}_{k}}{z-z_{k}}\delta(z),
\end{align}
which has the following properties.
\begin{prop}\label{T-property} $T(z)$ admits that\\
($a$) $T$ is meromorphic in $C\setminus I_{-}$;\\
($b$) For $z\in C\setminus I_{-}$, $T^{*}(z^{*})=\frac{1}{T(z)}$;\\
($c$) For $z\in I_{-}$, $t\rightarrow+\infty$, the boundary values $T_{\pm}$ satisfy
\begin{align}\label{4.6}
T_{+}(z)/T_{-}(z)=1+|r(z)|^{2}, z\in I_{-};
\end{align}
($d$) As $|z|\rightarrow \infty $ with $|arg(z)|\leq c<\pi$,
\begin{align}\label{4.7}
T(z)=1+\frac{i}{z}\left(2\sum_{k\in\triangle_{z_{0},1}^{+}}Im~z_{k}-\int_{--z{0}}^{z_{0}}\nu(s)ds\right)+O(z^{-2});
\end{align}
($e$) As $z\rightarrow z_{0}$ along any ray $z_{0}+e^{i\phi}R_{+}$ with $|\phi|\leq c<\pi$
\begin{align}\label{4.8}
|T(z,z_{0})-T_{0}(\pm z_{0})(z\mp z_{0})^{i\nu(\pm z_{0})}|\leq c\parallel r\parallel_{H^{1}(R)}|z\mp z_{0}|^{\frac{1}{2}},
\end{align}
where $T_{0}(z_{0})$ is the complex unit
\begin{align}\label{4.9}
\begin{split}
&T_{0}(\pm z_{0})=\prod_{k\in\triangle_{z_{0},1}^{+}}\left(\frac{\pm z_{0}-z^{*}_{k}}
{\pm z_{0}-z_{k}}\right)e^{i\beta^{\pm}(z_{0},\pm z_{0})},\\
&\beta^{\pm}(z,\pm z_{0})=-\nu(\pm z_{0})\log(z\mp z_{0}+1)+\int_{-z_{0}}^{z_{0}}
\frac{\nu(s)-\chi_{\pm}(s)\nu(\pm z_{0})}{s-z}ds.
\end{split}
\end{align}
Here $\chi_{\pm}(s)=1$ are the characteristic functions  of the interval $s\in(z_{0}-1,z_{0})$ and $s\in(-z_{0},-z_{0}+1)$ respectively.\\
($f$) As $z\rightarrow0$, $T(z)$ can be expressed as
\begin{align}\label{4.10}
T(z)=T(0)(1+zT_{1})+O(z^{2}),
\end{align}
where $T_{1}=2\sum_{k\in\triangle_{z_{0},1}^{+}}\frac{Im~z_{k}}{z_{k}}-\int_{-z_{0}}^{z_{0}}\frac{\nu(s)}{s^{2}}ds$.
\end{prop}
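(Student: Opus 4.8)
The plan is to treat $T$ as the product of the finite Blaschke-type factor $B(z)=\prod_{k\in\triangle_{z_{0},1}^{+}}\frac{z-z^{*}_{k}}{z-z_{k}}$ and the scalar Cauchy-type exponential $\delta(z)$, and to read off each property from the corresponding property of these two factors. Properties $(a)$ and $(b)$ are then essentially immediate. The integral $\int_{-z_{0}}^{z_{0}}\frac{\nu(s)}{s-z}\,ds$ is a Cauchy integral over $I_{-}$ (defined in \eqref{4.4}), hence analytic in $z$ for $z\in\mathbb{C}\setminus I_{-}$, so $\delta$ is analytic there; $B$ is rational with poles only at the $z_{k}\in\mathbb{C}^{+}$ and zeros at the $z_{k}^{*}\in\mathbb{C}^{-}$, none of which lie on $I_{-}$, which gives $(a)$. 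For $(b)$ I would use that $\nu$ is real-valued on $\mathbb{R}$, so that $\overline{\delta(\bar z)}=\exp[-i\int_{-z_{0}}^{z_{0}}\frac{\nu(s)}{s-z}\,ds]=1/\delta(z)$, while $\overline{B(\bar z)}=\prod\frac{z-z_{k}}{z-z_{k}^{*}}=1/B(z)$; multiplying the two yields $T^{*}(z^{*})=1/T(z)$.

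For $(c)$ I would invoke the Sokhotski--Plemelj formula on $I_{-}$: as $z\to\zeta\in(-z_{0},z_{0})$ from the upper/lower half plane, $\int_{-z_{0}}^{z_{0}}\frac{\nu(s)}{s-z}\,ds\to \mathrm{p.v.}\int_{-z_{0}}^{z_{0}}\frac{\nu(s)}{s-\zeta}\,ds\pm i\pi\nu(\zeta)$, so that $\delta_{\pm}(\zeta)=\exp[\,i\,\mathrm{p.v.}\!\int\mp\pi\nu(\zeta)\,]$ and hence $\delta_{+}/\delta_{-}=e^{-2\pi\nu(\zeta)}=1+|r(\zeta)|^{2}$ by the definition of $\nu$; since $B$ is continuous across $I_{-}$ it contributes no jump, which gives \eqref{4.6}. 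Property $(d)$ follows by expanding $\frac{1}{s-z}=-\frac{1}{z}-\frac{s}{z^{2}}-\cdots$ and $\frac{z-z_{k}^{*}}{z-z_{k}}=1+\frac{z_{k}-z_{k}^{*}}{z}+O(z^{-2})$ for large $z$, and using $z_{k}-z_{k}^{*}=2i\,\mathrm{Im}\,z_{k}$; collecting the $1/z$ coefficients reproduces \eqref{4.7}.

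The crux is $(e)$, the local behavior at $\pm z_{0}$, which must be carried out separately at $+z_{0}$ and $-z_{0}$ since the symmetry $z\mapsto -z$ is absent in this problem. The idea is to regularize the exponent by subtracting the value of $\nu$ at the phase point on a unit interval adjacent to it: writing $\nu(s)=[\nu(s)-\chi_{\pm}(s)\nu(\pm z_{0})]+\chi_{\pm}(s)\nu(\pm z_{0})$, the characteristic-function part integrates explicitly to a logarithm, which exponentiates to the power factor $(z\mp z_{0})^{i\nu(\pm z_{0})}$, while the subtracted-singularity part converges as $z\to\pm z_{0}$ and defines the regular quantity $\beta^{\pm}$ in \eqref{4.9}. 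The estimate \eqref{4.8} then reduces to controlling the rate of this convergence: since $r\in H^{1}(\mathbb{R})\hookrightarrow C^{1/2}(\mathbb{R})$ by Sobolev embedding, $\nu$ is Hölder-$\tfrac12$ continuous, and the regularized quotient $\frac{\nu(s)-\nu(\pm z_{0})}{s-z}$ yields an $O(|z\mp z_{0}|^{1/2})$ bound with constant proportional to $\|r\|_{H^{1}}$; combined with the Lipschitz dependence of $B$ near $\pm z_{0}$ this gives \eqref{4.8}. I expect this Hölder estimate on the regularized Cauchy integral to be the main obstacle, since it requires keeping the constant uniform and treating the two phase points independently.

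Finally, for $(f)$ I would Taylor-expand $\log T(z)=\log B(z)+i\int_{-z_{0}}^{z_{0}}\frac{\nu(s)}{s-z}\,ds$ about $z=0$ (understood, since $0$ is interior to $I_{-}$, as the expansion of the relevant boundary value). Using $\frac{1}{s-z}=\frac{1}{s}+\frac{z}{s^{2}}+O(z^{2})$, the Cauchy integral contributes a constant term plus a linear term involving $\int\frac{\nu(s)}{s^{2}}\,ds$, while the logarithmic derivative of $B$ at the origin supplies the soliton sum; collecting the constant and first-order coefficients produces $T(0)$ and the coefficient $T_{1}$ recorded in \eqref{4.10}.
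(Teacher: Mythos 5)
Your proposal is correct and is essentially the same argument as the paper's, whose ``proof'' is only a one-line appeal to a direct calculation in \cite{Yang-SP},\cite{Li-cgNLS}: the factorization of $T$ into the Blaschke product times $\delta$, the Plemelj formula for the jump $(c)$, the large-$z$ and small-$z$ expansions for $(d)$ and $(f)$, and the regularization $\nu(s)-\chi_{\pm}(s)\nu(\pm z_{0})$ together with the H\"older-$\tfrac12$ bound from $H^{1}(\mathbb{R})\hookrightarrow C^{1/2}(\mathbb{R})$ for $(e)$ are exactly the steps carried out in those references, including the separate treatment of the two phase points. The only point to make explicit in $(f)$ is that expanding $(s-z)^{-1}$ under the integral sign requires integrability of $\nu(s)/s^{2}$ near $s=0$ (which holds here because the reflection coefficient vanishes at the origin for this WKI-type spectral problem), a hypothesis your boundary-value caveat implicitly uses.
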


\begin{proof}
The properties of $T(z)$ can be proved by a direct calculation, for details, see \cite{Yang-SP},\cite{Li-cgNLS}.
\end{proof}

Then, by applying the function $T(z)$, we introduce a transformation
\begin{align}\label{Trans-1}
M^{(1)}(y,t;z)=M(y,t;z)T(z)^{\sigma_{3}},
\end{align}
which admits the following matrix RHP.
\begin{RHP}\label{RH-3}
Find an analysis function $M^{(1)}$ with the following properties:
\begin{itemize}
  \item $M^{(1)}$ is meromorphic on $C\setminus R$;
  \item $[M^{(1)}(y,t;z^{*})]^{*}=\sigma_{2}M^{(1)}(x,t;z)\sigma_{2}$;
  \item $M^{(1)}(z)= \mathbb{I}+O(z^{-1})$ as $z\rightarrow \infty$;
  \item $M^{(1)}_{\pm}(z)$ satisfy the jump relationship $M^{(1)}_{+}(z)=M^{(1)}_{-}(z)V^{(1)}(z)$, where
      \begin{align}\label{4.11}
       V^{(1)}=\left\{\begin{aligned}
      \left(
        \begin{array}{cc}
      1 & 0 \\
      r^{*}(z)T(z)^{2}e^{-2it\theta} & 1 \\
        \end{array}
      \right)\left(
     \begin{array}{cc}
       1 & r(z)T(z)^{-2}e^{2it\theta} \\
       0 & 1 \\
      \end{array}
    \right),z\in\mathbb{R}\setminus I_{-},\\
   \left(
    \begin{array}{cc}
    1 & \frac{r(z)T_{-}(z)^{-2}}{1+|r(z)|^{2}}e^{2it\theta} \\
    0 & 1 \\
     \end{array}
   \right)\left(
    \begin{array}{cc}
    1 &  \\
    \frac{r^{*}(z)T_{+}(z)^{2}}{1+|r(z)|^{2}}e^{-2it\theta} & 1 \\
   \end{array}
  \right),z\in I_{-}\setminus\{\pm z_{0}\}.
   \end{aligned}\right.
   \end{align}
   \item $M^{(1)}(z)$ has simple poles at each $z_{k}\in \mathcal{Z}$ and $z^{*}_{k}\in \mathcal{Z}^{*}$ at which
\begin{align}\label{4.12}
\begin{split}
&\mathop{Res}\limits_{z=z_{k}}M^{(1)}=\left\{\begin{aligned}
&\lim_{z\rightarrow z_{k}}M^{(1)}\left(\begin{array}{cc}
    0 & 0\\
    c_{k}^{-1}\left((\frac{1}{T})'(z_{k})\right)^{-2}e^{-2it\theta} & 0 \\
  \end{array}
\right),k\in \triangle_{z_{0},1}^{+}\\
&\lim_{z\rightarrow z_{k}}M^{(1)}\left(
  \begin{array}{cc}
    0 & c_{k}T^{-2}(z_{k})e^{2it\theta}\\
     & 0 \\
  \end{array}\right),k\in \triangle_{z_{0},1}^{-}
\end{aligned}\right.\\
&\mathop{Res}\limits_{z=z^{*}_{k}}M^{(1)}=\left\{\begin{aligned}
&\lim_{z\rightarrow z^{*}_{k}}M^{(1)}\left(\begin{array}{cc}
    0 & 0\\
    -(c^{*}_{k})^{-1}(T'(z^{*}_{k}))^{-2}e^{-2it\theta} & 0 \\
  \end{array}
\right),k\in \triangle_{z_{0},1}^{-}\\
&\lim_{z\rightarrow z^{*}_{k}}M^{(1)}\left(
  \begin{array}{cc}
    0 & -c^{*}_{k}(T(z^{*}_{k}))^{2}e^{2it\theta} \\
    0 & 0 \\
  \end{array}\right),k\in \triangle_{z_{0},1}^{+}
\end{aligned}\right.
\end{split}
\end{align}
\end{itemize}
\end{RHP}

\begin{proof}
Based on the above analysis, it is easy to prove the analyticity, jump  conditions, asymptotic behaviors and residue condition, for detail, see \cite{Yang-SP},\cite{Li-cgNLS}.
\end{proof}

\section{Continuous extension to a mixed $\bar{\partial}$-RH problem}
In this section, our purpose is to extend the jump matrix off the real axis. Here we just need the extension is continuous, and the oscillation term along the new contours are decaying. Firstly, we introduce some the contours
\begin{align}\label{5.11}
\begin{split}
&\Sigma_{j}=z_{0}+e^{i(2j-1)\pi/4}R_{+},~~j=1,4;\\
&\Sigma_{j}=z_{0}+e^{i(2j-1)\pi/4}h,~~h\in\left(0,\frac{\sqrt{2}}{2}z_{0}\right),~~j=2,3;\\
&\Sigma_{j}=-z_{0}+e^{i(2j-1)\pi/4}h,~~h\in\left(0,\frac{\sqrt{2}}{2}z_{0}\right),~~j=5,8;\\
&\Sigma_{j}=-z_{0}+e^{i(2j-1)\pi/4}R_{+},~~j=6,7;\\
&\Sigma_{j}=e^{i(2j-1)\pi/4}h,~~h\in\left(0,\frac{\sqrt{2}}{2}z_{0}\right),~~,j=9,10,11,12;\\
&\Sigma^{2}=\cup_{j=1}^{12}\Sigma_{j}.
\end{split}
\end{align}
Then, the complex plane $\mathbb{C}$ is separated into ten sectors which are denoted by $\Omega_{j}(j=1,2,\ldots,10)$ respectively, and shown in Figure 2.\\

\centerline{\begin{tikzpicture}[scale=0.8]
\path [fill=pink] (-4,0) -- (-2,2) to (0,0) -- (-2,-2);
\path [fill=pink] (4,0) -- (2,2) to (0,0) -- (2,-2);
\path [fill=pink] (4,0) -- (7,3) to (8,3) -- (8,0);
\path [fill=pink] (4,0) -- (7,-3) to (8,-3) -- (8,0);
\path [fill=pink] (-4,0) -- (-7,3) to (-8,3) -- (-8,0);
\path [fill=pink] (-4,0) -- (-7,-3) to (-8,-3) -- (-8,0);
\draw [dashed](-8,0)--(8,0);
\draw[->][thick](4,0)--(6,2);
\draw[-][thick](6,2)--(7,3);
\draw[->][thick](4,0)--(6,-2);
\draw[-][thick](6,-2)--(7,-3);
\draw[->][thick](4,0)--(3,1);
\draw[-][thick](3,1)--(2,2);
\draw[->][thick](4,0)--(3,-1);
\draw[-][thick](3,-1)--(2,-2);
\draw[->][thick](2,2)--(1,1);
\draw[->][thick](1,1)--(-1,-1);
\draw[->][thick](2,-2)--(1,-1);
\draw[->][thick](1,-1)--(-1,1);
\draw[-][thick](-1,1)--(-2,2);
\draw[->][thick](-2,2)--(-3,1);
\draw[-][thick](-3,1)--(-6,-2);
\draw[->][thick](-2,2)--(-3,1);
\draw[->][thick](-7,3)--(-6,2);
\draw[-][thick](-1,-1)--(-2,-2);
\draw[->][thick](-2,-2)--(-3,-1);
\draw[-][thick](-3,-1)--(-6,2);
\draw[->][thick](-7,-3)--(-6,-2);
\draw[fill] (0,0)node[below]{$0$} circle [radius=0.08];
\draw[fill] (4,0)node[below]{$z_{0}$} circle [radius=0.08];
\draw[fill] (-4,0)node[below]{$-z_{0}$} circle [radius=0.08];
\draw[fill] (2,0)node[below]{$\Omega_{4}$};
\draw[fill] (2,0)node[above]{$\Omega_{3}$};
\draw[fill] (-2,0)node[below]{$\Omega_{9}$};
\draw[fill] (-2,0)node[above]{$\Omega_{8}$};
\draw[fill] (6,0)node[below]{$\Omega_{6}$};
\draw[fill] (6,0)node[above]{$\Omega_{1}$};
\draw[fill] (-6,0)node[below]{$\Omega_{10}$};
\draw[fill] (-6,0)node[above]{$\Omega_{7}$};
\draw[fill] (0,2)node[above]{$\Omega_{2}$};
\draw[fill] (0,-2)node[below]{$\Omega_{5}$};
\draw[fill] (7,3)node[left]{$\Sigma_{1}$};
\draw[fill] (7,-3)node[left]{$\Sigma_{4}$};
\draw[fill] (-7,3)node[right]{$\Sigma_{6}$};
\draw[fill] (-7,-3)node[right]{$\Sigma_{7}$};
\draw[fill] (-2,2)node[left]{$\Sigma_{5}$};
\draw[fill] (-2,2)node[right]{$\Sigma_{10}$};
\draw[fill] (-2,-2)node[left]{$\Sigma_{8}$};
\draw[fill] (-2,-2)node[right]{$\Sigma_{11}$};
\draw[fill] (2,2)node[left]{$\Sigma_{9}$};
\draw[fill] (2,2)node[right]{$\Sigma_{2}$};
\draw[fill] (2,-2)node[left]{$\Sigma_{12}$};
\draw[fill] (2,-2)node[right]{$\Sigma_{3}$};
\draw[fill] (0,3)node[above]{$sgn(t)=1(t\rightarrow+\infty)$};
\end{tikzpicture}}
\centerline{\noindent {\small \textbf{Figure 2.}  Definition of $R^{(2)}$ in different domains.}}

Moreover, define
\begin{align}
\rho=\frac{1}{2}\min_{(z_{a}\neq z_{b})\in \mathcal{Z}\cup \mathcal{Z}^{*}}\{|z_{a}-z_{b}|\},
\end{align}
and $\chi_{Z} \in C_{0}^{\infty} (C, [0, 1])$ which is supported near the discrete spectrum $\mathcal{Z}\cup \mathcal{Z}^{*}$ such that
\begin{align}\label{5.2}
\chi_{Z}(z)=\left\{\begin{aligned}
&1,~~dist(z,\mathcal{Z}\cup \mathcal{Z}^{*})<\rho/3, \\
&0,~~dist(z,\mathcal{Z}\cup \mathcal{Z}^{*})>2\rho/3.
\end{aligned}\right.
\end{align}
Also we can verify that $dist(\mathcal{Z}\cup \mathcal{Z}^{*}, R)>\rho, k=1,2,\ldots,N.$

Next, in order to achieve the purpose of extending the jump matrix onto the new contours along which oscillation term are decaying, we introduce a transformation
\begin{align}\label{Trans-2}
M^{(2)}=M^{(1)}R^{(2)},
\end{align}
where $R^{(2)}$ possesses some restrictions.
\begin{itemize}
  \item The aim of the transformation is to extend the jump matrix onto the new contours $\Sigma^{(2)}$. So on the real axis, $M^{(2)}$ must have no jump.
  \item To guarantee that the $\bar{\partial}$-contribution  has little impact on the large-time asymptotic solution of $u(x,t)$, the norm of $R^{(2)}$ need to be controlled.
  \item The introduced transformation need to have no impact on the residue condition.
\end{itemize}

Then, we define $R^{(2)}$ as
\begin{align}
R^{(2)}=\left\{\begin{aligned}
&\left(
  \begin{array}{cc}
    1 & (-1)^{m_{j}}R_{j}e^{2it\theta}  \\
    0 & 1 \\
  \end{array}
\right), ~~&z\in\Omega_{j},~~j=1,4,7,9,\\
&\left(
  \begin{array}{cc}
    1 & 0 \\
    (-1)^{m_{j}}R_{j}e^{-2it\theta} & 1 \\
  \end{array}
\right), ~~&z\in\Omega_{j},~~j=3,6,8,10,\\
&\left(
  \begin{array}{cc}
    1 & 0 \\
    0 & 1 \\
  \end{array}
\right),~~ &z\in\Omega_{2}\cup\Omega_{5},
\end{aligned}
\right.
\end{align}
where $m_{j}=1(j=1,3,7,8)$ and $m_{j}=0(j=4,6,9,10)$ and $R_{j}(z)$ are defined in the following proposition.
\begin{prop}\label{R-property}
There exists functions $R_{j}: \Omega_{j} \rightarrow C, j= 1, 3, 4, 6,7,8,9,10$ such that
\begin{align*}
&R_{1}(z)=\left\{\begin{aligned}&r(z)T^{-2}(z), ~~~~z\in(z_{0}, \infty),\\
&f_{1}=r(z_{0})T_{0}^{-2}(z_{0})(z-z_{0})^{-2i\nu(z_{0})}(1-\chi_{Z}(z)), z\in\Sigma_{1},
\end{aligned}\right.\\
&R_{3}(z)=\left\{\begin{aligned}&\frac{r^{*}(z)}{1+|r(z)|^{2}}T_{+}^{2}(z), ~~~~z\in(0, z_{0}),\\
&f_{3}=\frac{r^{*}(z_{0})}{1+|r(z_{0})|^{2}}T_{0}^{2}(z_{0}) (z-z_{0})^{2i\nu(z_{0})}(1-\chi_{Z}(z)), z\in\Sigma_{2},
\end{aligned}\right.\\
&R_{4}(z)=\left\{\begin{aligned}&\frac{r(z)}{1+|r(z)|^{2}}T_{-}^{2}(z), ~~~~z\in(0, z_{0}),\\
&f_{4}=\frac{r(z_{0})}{1+|r(z_{0})|^{2}}T_{0}^{-2}(z_{0}) (z-z_{0})^{-2i\nu(z_{0})}(1-\chi_{Z}(z)), z\in\Sigma_{3},
\end{aligned}\right.\\
&R_{6}(z)=\left\{\begin{aligned}&r^{*}(z)T^{2}(z), ~~~~z\in(z_{0}, \infty),\\
&f_{6}=r^{*}(z_{0})T_{0}^{2}(z_{0})(z-z_{0})^{2i\nu(z_{0})}(1-\chi_{Z}(z)), z\in\Sigma_{4}.
\end{aligned}\right.\\
\end{align*}
\begin{align*}
&R_{7}(z)=\left\{\begin{aligned}&r(z)T^{-2}(z), ~~~~z\in(-\infty, z_{0}),\\
&f_{7}=r(-z_{0})T_{0}^{-2}(-z_{0})(z+z_{0})^{-2i\nu(-z_{0})}(1-\chi_{Z}(z)), z\in\Sigma_{5},
\end{aligned}\right.\\
&R_{8}(z)=\left\{\begin{aligned}&\frac{r^{*}(z)}{1+|r(z)|^{2}}T_{+}^{2}(z), ~~~~z\in(-z_{0},0),\\
&f_{8}=\frac{r^{*}(-z_{0})}{1+|r(-z_{0})|^{2}}T_{0}^{2}(-z_{0}) (z+z_{0})^{2i\nu(-z_{0})}(1-\chi_{Z}(z)), z\in\Sigma_{5},
\end{aligned}\right.\\
&R_{9}(z)=\left\{\begin{aligned}&\frac{r(z)}{1+|r(z)|^{2}}T_{-}^{2}(z), ~~~~z\in(-z_{0},0),\\
&f_{9}=\frac{r(-z_{0})}{1+|r(-z_{0})|^{2}}T_{0}^{-2}(-z_{0}) (z+z_{0})^{-2i\nu(-z_{0})}(1-\chi_{Z}(z)), z\in\Sigma_{8},
\end{aligned}\right.\\
&R_{10}(z)=\left\{\begin{aligned}&r^{*}(z)T^{2}(z), ~~~~z\in(-\infty, -z_{0}),\\
&f_{10}=r^{*}(-z_{0})T_{0}^{2}(-z_{0})(z+z_{0})^{2i\nu(-z_{0})}(1-\chi_{Z}(z)), z\in\Sigma_{7}.
\end{aligned}\right.
\end{align*}
And $R_{j}$ admit that
\begin{align}
j=1,3,4,6~~~~\left\{\begin{aligned}&|R_{j}(z)|\leq c_{1}\sin^{2}(\arg (z-z_{0}))+c_{2}\left<Rez\right>^{-1/2},\\
&|\bar{\partial}R_{j}(z)|\leq c_{1}\bar{\partial}\chi_{Z}(z)+c_{2}|z-z_{0}|^{-1/2}+c_{3}|p'_{j}(Rez)|,
\end{aligned}\right.\label{R-estimate-1}\\
j=7,8,9,10~~~~\left\{\begin{aligned}&|R_{j}(z)|\leq c_{1}\sin^{2}(\arg (z+z_{0}))+c_{2}\left<Rez\right>^{-1/2},\\
&|\bar{\partial}R_{j}(z)|\leq c_{1}\bar{\partial}\chi_{Z}(z)+c_{2}|z+z_{0}|^{-1/2}+c_{3}|p'_{j}(Rez)|,
\end{aligned}\right.\label{R-estimate-2}\\
\bar{\partial}R_{j}(z)=0,z\in \Omega_{2}\cup\Omega_{5},or~ dist(z,\mathcal{Z}\cup\mathcal{Z}^{*})\leq \rho/3,
\end{align}
where
\begin{gather*}
\left<Rez\right>=\sqrt{1+(Rez)^{2}},\\
p_{1}=p_{7}=r(z),~~p_{3}=p_{8}=\frac{r(z)}{1+|r(z)|^{2}},\\
p_{6}=p_{10}=r^{*}(z),~~p_{4}=p_{9}=\frac{r^{*}(z)}{1+|r(z)|^{2}}.
\end{gather*}
\end{prop}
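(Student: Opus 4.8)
The plan is to construct each $R_j$ explicitly as a continuous angular interpolation between its prescribed value on the real axis and the value $f_j$ on the adjacent ray $\Sigma_j$, and then to extract the two estimates by differentiating that explicit formula. The three structural requirements on $R^{(2)}$ translate directly into conditions on each scalar $R_j$: it must equal the relevant entry of the factorized jump of RHP \ref{RH-3} (e.g.\ $r(z)T^{-2}(z)$ for $j=1$) on the corresponding piece of $\mathbb{R}$, so that $M^{(2)}$ acquires no jump there; it must reduce to $f_j$ on $\Sigma_j$; and it must be supported away from $\mathcal{Z}\cup\mathcal{Z}^{*}$, which is precisely the role of the factor $1-\chi_Z(z)$ in every formula. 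The analytic input is the embedding $H^{1}(\mathbb{R})\hookrightarrow C^{1/2}(\mathbb{R})$, which supplies both the decay $|r(\mathrm{Re}\,z)|\lesssim \langle \mathrm{Re}\,z\rangle^{-1/2}\|r\|_{H^{1}}$ and the H\"older bound $|r(\mathrm{Re}\,z)-r(\pm z_{0})|\lesssim \|r\|_{H^{1}}|\mathrm{Re}\,z\mp z_{0}|^{1/2}$, together with the near-phase-point behaviour of $T$ recorded in Proposition \ref{T-property}(e). Since $1+|r|^{2}\geq1$, the base functions $p_{3}=p_{8}=r/(1+|r|^{2})$, etc., are again in $H^{1}$ and hence $C^{1/2}$, so they enjoy the same two bounds.

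Concretely, writing $z-z_{0}=|z-z_{0}|e^{i\phi}$ inside $\Omega_{1}$ (so $\phi$ runs from $0$ on $(z_{0},\infty)$ to $\pi/4$ on $\Sigma_{1}$), I would set
\[
R_{1}(z)=\Big[\,r(\mathrm{Re}\,z)\cos(2\phi)+r(z_{0})T_{0}^{-2}(z_{0})(z-z_{0})^{-2i\nu(z_{0})}T^{2}(z)\big(1-\cos(2\phi)\big)\Big]\,T^{-2}(z)\big(1-\chi_Z(z)\big),
\]
and analogously in the other seven sectors, replacing $r$ by the base function $p_{j}$ and $z_{0}$ by $-z_{0}$ for $j=7,8,9,10$ (the $-z_{0}$ cases may alternatively be obtained from the symmetry \eqref{2.11}). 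Because $\cos(2\phi)=1$ on the real ray and $\cos(2\phi)=0$ on $\Sigma_{1}$, the two prescribed boundary values are realized verbatim, the prefactor $T^{-2}(z)$ routes the oscillation into the correct triangular slot, and continuity across the sector is immediate. For the modulus estimate I would rewrite the bracket as $r(\mathrm{Re}\,z)+\big(\text{model}-r(\mathrm{Re}\,z)\big)\,2\sin^{2}\phi$; since $T^{\pm2}$ is bounded away from $z_{0}$ and the model term is bounded, and $|r(\mathrm{Re}\,z)|\lesssim\langle\mathrm{Re}\,z\rangle^{-1/2}$, this yields exactly $|R_{1}(z)|\lesssim c_{1}\sin^{2}(\arg(z-z_{0}))+c_{2}\langle\mathrm{Re}\,z\rangle^{-1/2}$, and likewise for the remaining indices.

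For the $\bar{\partial}$-estimate I would apply $\bar{\partial}=\tfrac12(\partial_{x}+i\partial_{y})$ to the explicit formula and collect three contributions. Differentiating the cutoff produces the $c_{1}\bar{\partial}\chi_{Z}$ term; differentiating $r(\mathrm{Re}\,z)$ gives $\tfrac12 r'(\mathrm{Re}\,z)$, i.e.\ the $c_{3}|p_{j}'(\mathrm{Re}\,z)|$ term; and differentiating the angular factor $\cos(2\phi)$ gives $\bar{\partial}\cos(2\phi)\sim|z-z_{0}|^{-1}$, which multiplies the difference between $r(\mathrm{Re}\,z)$ and the analytic model. \emph{The main obstacle is controlling precisely this last contribution}: the $|z-z_{0}|^{-1}$ singularity from the angular derivative is harmless only because it is multiplied by a quantity that vanishes like $|z-z_{0}|^{1/2}$, which is where the $C^{1/2}$ H\"older continuity of $r$ (for the $r(\mathrm{Re}\,z)-r(z_{0})$ part) and Proposition \ref{T-property}(e) (for the $T^{2}(z)-T_{0}^{2}(z_{0})(z-z_{0})^{2i\nu}$ part) enter; their combination leaves exactly the integrable bound $c_{2}|z-z_{0}|^{-1/2}$. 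Assembling the three pieces gives \eqref{R-estimate-1}, and the identical argument centered at $-z_{0}$ gives \eqref{R-estimate-2}, while $\bar{\partial}R_{j}\equiv0$ on $\Omega_{2}\cup\Omega_{5}$ and inside $\mathrm{dist}(z,\mathcal{Z}\cup\mathcal{Z}^{*})\leq\rho/3$ follows because $R^{(2)}$ is the identity there and $\chi_{Z}\equiv1$ respectively.
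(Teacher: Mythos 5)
Your construction coincides with the argument the paper is actually invoking: the paper offers no proof of Proposition \ref{R-property} beyond deferring to \cite{AIHP,Li-cgNLS}, and in those references the extensions are built exactly as you propose --- an angular interpolation between the boundary datum $p_{j}(\mathrm{Re}\,z)T^{\mp2}$ on $\mathbb{R}$ and the frozen model $f_{j}$ on $\Sigma_{j}$, cut off by $1-\chi_{Z}$ (harmless on $\mathbb{R}$ since $dist(\mathcal{Z}\cup\mathcal{Z}^{*},\mathbb{R})>\rho$), with the $\bar{\partial}$-bound collected from precisely your three contributions: the cutoff term $\bar{\partial}\chi_{Z}$, the $|z\mp z_{0}|^{-1}$ angular derivative tamed by the H\"older-$\tfrac12$ modulus of the base function together with Proposition \ref{T-property}($e$), and the term $\tfrac12 p_{j}'(\mathrm{Re}\,z)$. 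Your choice of $\cos(2\phi)$ also matches the boundary values in every sector, since each $\Sigma_{j}$ emanates from $\pm z_{0}$ at an odd multiple of $\pi/4$ while the real rays sit at $\phi=0$ or $\pi$.

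One justification, however, is wrong as stated, though easily repaired. The decay $|r(\mathrm{Re}\,z)|\lesssim\langle\mathrm{Re}\,z\rangle^{-1/2}$, which is the entire source of the $c_{2}\langle\mathrm{Re}\,z\rangle^{-1/2}$ term in \eqref{R-estimate-1}--\eqref{R-estimate-2}, does \emph{not} follow from the embedding $H^{1}(\mathbb{R})\hookrightarrow C^{1/2}(\mathbb{R})$: a translated bump $r(\cdot-n)$ has fixed $H^{1}$ norm but $|r(n)|$ constant, so $H^{1}$ membership gives boundedness and the H\"older estimate, yet no decay \emph{rate}. The correct source is the weighted bound: for $u>0$,
\begin{align*}
u\,|r(u)|^{2}=u\left|\int_{u}^{\infty}\frac{d}{ds}|r(s)|^{2}\,ds\right|
\leq 2\int_{u}^{\infty}s\,|r(s)|\,|r'(s)|\,ds
\leq 2\,\|s\,r(s)\|_{L^{2}(\mathbb{R})}\,\|r'\|_{L^{2}(\mathbb{R})},
\end{align*}
and symmetrically for $u<0$; so one needs $xr\in L^{2}(\mathbb{R})$, which is exactly the weighted regularity that the hypothesis $u_{0}\in H^{1,1}(\mathbb{R})$ furnishes for the reflection coefficient through the direct scattering map. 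Since $1+|r|^{2}\geq1$, the same weighted bound transfers to $p_{3},p_{4},p_{8},p_{9}$, as you note for the H\"older part. With this substitution, your assembly of the modulus estimate and of the three-term $\bar{\partial}$ estimate goes through verbatim and reproduces the cited proof.
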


The proof process of the results in Proposition \ref{R-property} is similar to that in \cite{AIHP,Li-cgNLS}.

Then, based on $R^{(2)}$ shown in Proposition \ref{R-property} and applying the transformation\eqref{Trans-2}, we obtain $M^{(2)}$ which admits the following mixed $\bar{\partial}$-RH problem.

\begin{RHP}\label{RH-4}
Find a matrix value function $M^{(2)}$, admitting
\begin{itemize}
 \item $M^{(2)}(x,t,z)$ is continuous in $\mathbb{C}\setminus(\Sigma^{(2)}\cup\mathcal{Z}\cup\mathcal{Z}^{*})$.
 \item  $[M^{(2)}(y,t;z^{*})]^{*}=\sigma_{2}M^{(2)}(x,t;z)\sigma_{2}$.
 \item $M_+^{(2)}(x,t,z)=M_{-}^{(2)}(x,t,z)V^{(2)}(x,t,z),$ ~~ $z\in\Sigma^{(2)}$, where the jump matrix $V^{(2)}(x,t,z)$ satisfies
 \begin{align}
V^{(2)}=\left\{\begin{aligned}
&\left(
  \begin{array}{cc}
    1 & R_{1}e^{2it\theta}  \\
    0 & 1 \\
  \end{array}
\right), ~~&z\in\Sigma_{1},\\
&\left(
  \begin{array}{cc}
    1 & 0 \\
    R_{3}e^{-2it\theta} & 1 \\
  \end{array}
\right), ~~&z\in\Sigma_{2}\cup\Sigma_{9},\\
&\left(
  \begin{array}{cc}
    1 & R_{4}e^{2it\theta}  \\
    0 & 1 \\
  \end{array}
\right), ~~&z\in\Sigma_{3}\cup\Sigma_{12},\\
&\left(
  \begin{array}{cc}
    1 & 0 \\
    R_{6}e^{-2it\theta} & 1 \\
  \end{array}
\right), ~~&z\in\Sigma_{4},\\
&\left(
  \begin{array}{cc}
    1 & 0 \\
    R_{8}e^{-2it\theta} & 1 \\
  \end{array}
\right), ~~&z\in\Sigma_{5}\cup\Sigma_{10},\\
&\left(
  \begin{array}{cc}
    1 & R_{7}e^{2it\theta}  \\
    0 & 1 \\
  \end{array}
\right), ~~&z\in\Sigma_{6},\\
&\left(
  \begin{array}{cc}
    1 & 0 \\
    R_{10}e^{-2it\theta} & 1 \\
  \end{array}
\right), ~~&z\in\Sigma_{7},\\
&\left(
  \begin{array}{cc}
    1 & R_{9}e^{2it\theta}  \\
    0 & 1 \\
  \end{array}
\right), ~~&z\in\Sigma_{8}\cup\Sigma_{11};\\
\end{aligned}
\right.
\end{align}
\item $M^{(2)}(x,t,z)\rightarrow \mathbb{I},$ \quad $z\rightarrow\infty$.
\item For $\mathbb{C}\setminus(\Sigma^{(2)}\cup\mathcal{Z}\cup\mathcal{Z}^{*})$, $\bar{\partial}M^{(2)}=M^{(2)}\bar{\partial}\mathcal{R}^{(2)}(z),$ where
   \begin{align}\label{dbar-R2}
\bar{\partial}R^{(2)}=\left\{\begin{aligned}
&\left(
  \begin{array}{cc}
    1 & (-1)^{m_{j}}\bar{\partial}R_{j}e^{2it\theta}  \\
    0 & 1 \\
  \end{array}
\right), ~~&z\in\Omega_{j},~~j=1,4,7,9,\\
&\left(
  \begin{array}{cc}
    1 & 0 \\
    (-1)^{m_{j}}\bar{\partial}R_{j}e^{-2it\theta} & 1 \\
  \end{array}
\right), ~~&z\in\Omega_{j},~~j=3,6,8,10,\\
&\left(
  \begin{array}{cc}
    0 & 0 \\
    0 & 0 \\
  \end{array}
\right),~~ &z\in\Omega_{2}\cup\Omega_{5},
\end{aligned}
\right.
\end{align}
  \item  $M^{(2)}$ admits the residue conditions at poles $z_{k} \in \mathcal{Z}$ and $z^{*}_{k} \in \mathcal{Z}^{*}$, i.e.,
     \begin{align}
\begin{split}
&\mathop{Res}\limits_{z=z_{k}}M^{(2)}=\left\{\begin{aligned}
&\lim_{z\rightarrow z_{k}}M^{(1)}\left(\begin{array}{cc}
    0 & 0\\
    c_{k}^{-1}\left((\frac{1}{T})'(z_{k})\right)^{-2}e^{-2it\theta} & 0 \\
  \end{array}
\right),k\in \triangle_{z_{0},1}^{+}\\
&\lim_{z\rightarrow z_{k}}M^{(2)}\left(
  \begin{array}{cc}
    0 & c_{k}T^{-2}(z_{k})e^{2it\theta}\\
     & 0 \\
  \end{array}\right),k\in \triangle_{z_{0},1}^{-}
\end{aligned}\right.\\
&\mathop{Res}\limits_{z=z^{*}_{k}}M^{(1)}=\left\{\begin{aligned}
&\lim_{z\rightarrow z^{*}_{k}}M^{(1)}\left(\begin{array}{cc}
    0 & 0\\
    -(c^{*}_{k})^{-1}(T'(z^{*}_{k}))^{-2}e^{-2it\theta} & 0 \\
  \end{array}
\right),k\in \triangle_{z_{0},1}^{-}\\
&\lim_{z\rightarrow z^{*}_{k}}M^{(1)}\left(
  \begin{array}{cc}
    0 & -c^{*}_{k}(T(z^{*}_{k}))^{2}e^{2it\theta} \\
    0 & 0 \\
  \end{array}\right),k\in \triangle_{z_{0},1}^{+}
\end{aligned}\right.
\end{split}
\end{align}
\end{itemize}
\end{RHP}

\section{Decomposition of the mixed $\bar{\partial}$-RH problem}
The purpose of this section is to decompose the mixed $\bar{\partial}$-RH problem into two parts which include a model RH problem with $\bar{\partial}R^{(2)}=0$ and a pure $\bar{\partial}$-RH problem with $\bar{\partial}R^{(2)}\neq0$. We denote $M^{(2)}_{RHP}$ as the solution of the model RH problem, and first construct a RH problem for $M^{(2)}_{RHP}$.

\begin{RHP}\label{RH-rhp}
Find a matrix value function $M^{(2)}_{RHP}$, admitting
\begin{itemize}
 \item $M^{(2)}_{RHP}$ is analytical in $\mathbb{C}\backslash(\Sigma^{(2)}\cup\mathcal{Z}\cup\mathcal{Z}^{*})$;
 \item  $[M^{(2)}_{RHP}(y,t;z^{*})]^{*}=\sigma_{2}M^{(2)}_{RHP}(x,t;z)\sigma_{2}$;
 \item $M^{(2)}_{RHP,+}(x,t,z)=M^{(2)}_{RHP,-}(x,t,z)V^{(2)}(x,t,z),$ \quad $z\in\Sigma^{(2)}$, where $V^{(2)}(x,t,z)$ is the same with the jump matrix appears in RHP \ref{RH-3};
 \item As $z\rightarrow\infty$, $M^{(2)}_{RHP}(x,t,z)=\mathbb{I}+o(z^{-1})$;
 \item $M^{(2)}_{RHP}$ possesses the same residue condition with $M^{(2)}$.
 \end{itemize}
\end{RHP}

Then, if we can prove the existence of the solution of $M^{(2)}_{RHP}$, the RHP \ref{RH-4} can be reduced to a pure $\bar{\partial}$-RH problem. The existence of the solution of $M^{(2)}_{RHP}$ will be proved in section \ref{section-pure RH problem}. Now, supposing that the solution $M^{(2)}_{RHP}$ exists, and constructing a transformation
\begin{align}\label{delate-pure-RHP}
M^{(3)}(z)=M^{(2)}(z)M^{(2)}_{RHP}(z)^{-1},
\end{align}
we obtain the following pure $\bar{\partial}$-RH problem.
\begin{RHP}\label{RH-5}
Find a matrix value function $M^{(3)}$, admitting
\begin{itemize}
 \item $M^{(3)}$ is continuous with sectionally continuous first partial derivatives in $\mathbb{C}\backslash(\Sigma^{(2)}\cup\mathcal{Z}\cup\mathcal{Z}^{*})$;
 \item  $[M^{(3)}(y,t;z^{*})]^{*}=\sigma_{2}M^{(3)}(x,t;z)\sigma_{2}$;
 \item For $z\in \mathbb{C}$, we obtain $\bar{\partial}M^{(3)}(z)=M^{(3)}(z)W^{(3)}(z)$,
       where
       \begin{align}\label{5.1}
       W^{(3)}=M_{RHP}^{(2)}(z)\bar{\partial}R^{(2)}M_{RHP}^{(2)}(z)^{-1};
       \end{align}
 \item As $z\rightarrow\infty$, ~~$M^{(3)}(z)=I+o(z^{-1})$.
 \end{itemize}
\end{RHP}
\begin{proof}
According to the properties of the $M^{(2)}_{RHP}$ and $M^{(2)}$ for RHP \ref{RH-rhp} and RHP \ref{RH-4}, the analyticity and asymptotic properties of $M^{(3)}$ can be derived easily. Noting the fact that $M^{(2)}_{RHP}$ possesses the same jump matrix with $M^{(2)}$, we obtain that
\begin{align*}
M^{(3)}_{-}(z)^{-1}M^{(3)}_{+}(z)&=M^{(2)}_{RHP,-}(z)M^{(2)}_{-}(z)^{-1}M^{(2)}_{+}(z)M^{(2)}_{RHP,+}(z)^{-1}\\
&=M^{(2)}_{RHP,-}(z)V^{2}(z)(M^{(2)}_{RHP,-}(z)V^{2}(z))^{-1}=\mathbb{I},
\end{align*}
which implies that $M^{(3)}$ has no jump. Also, it is easy to prove that there exists no pole in $M^{(3)}$ by a simple analysis, for details, see \cite{AIHP, Yang-SP,Li-cgNLS}.
\end{proof}

\section{The pure RH problem}\label{section-pure RH problem}

In this section, we construct the solution $M^{(2)}_{RHP}$ of RHP \ref{RH-rhp}. Define that
\begin{align*}
\mathcal{U}_{\pm z_0}=\{z:|z\mp z_0|<min\{\frac{z_{0}}{2},\rho/3\}\}.
\end{align*}
Then, we can decompose $M^{(2)}_{RHP}$ into two parts
\begin{align}\label{Mrhp}
M^{(2)}_{RHP}(z)=\left\{\begin{aligned}
&E(z)M^{(out)}(z), &&z\in\mathbb{C}\setminus\mathcal{U}_{\pm z_0},\\
&E(z)M^{(\pm z_0)}(z), &&z\in \mathcal{U}_{\pm z_0},
\end{aligned} \right.
\end{align}
from which we obtain that $M^{\pm z_0}(z)$ possesses no poles in $\mathcal{U}_{\pm z_0}$.
Besides, $M^{(out)}$ solves a model RHP, the solution of $M^{(\pm z_0)}$ can be approximated with a known parabolic cylinder model in $\mathcal{U}_{\pm z_0}$, and $E(z)$ is an error function which is a solution of a small-norm Riemann-Hilbert problem.

Additionally, for the jump matrix $V^{(2)}$, we evaluate its estimate.
\begin{align}
&||V^{(2)}-\mathbb{I}||_{L^{\infty}(\Sigma_{\pm}^{(2)}\setminus\mathcal{U}_{\pm z_0})}
=O\left(e^{-\frac{\sqrt{2}}{16}t|z\mp z_0|^{2}}\right),\label{V2-Est-1}\\
&||V^{(2)}-\mathbb{I}||_{L^{\infty}(\Sigma_{0}^{(2)}}
=O\left(e^{-\frac{t}{4z_{0}}}\right),\label{V2-Est-2}
\end{align}
where $\Sigma_{\pm}^{(2)}$ and $\Sigma_{0}^{(2)}$ are defined as
\begin{align*}
\Sigma_{+}^{(2)}=\cup_{j=1}^{4}\Sigma_{j},~~\Sigma_{-}^{(2)}=\cup_{j=5}^{8}\Sigma_{j},
~~\Sigma_{0}^{(2)}=\cup_{j=9}^{12}\Sigma_{j}.
\end{align*}

According to the above estimate of the jump matrix $V^{(2)}$, we know that if we omit the jump condition of  $M^{(2)}_{RHP}(z)$, there only exists exponentially small error with respect to $t$ outside the $\mathcal{U}_{+z_0}\cup\mathcal{U}_{-z_0}$. In addition, noting the fact that
$V^{(2)}\rightarrow I$ as $z\rightarrow 0$, it is not necessary to study the  neighborhood of $z=0$ alone.

\subsection{Outer model RH problem: $M^{(out)}$}
In this section, we establish a model RH problem and prove that its solution can be approximated by finite sum of soliton solutions.

\begin{RHP}\label{RH-6}
Find a matrix value function $M^{(out)}(y,t;z)$, admitting
\begin{itemize}
  \item $M^{(out)}(y,t;z)$ is analytical in $\mathbb{C}\setminus(\Sigma^{(2)}\cup\mathcal{Z}\cup\mathcal{Z}^{*})$;
  \item $[M^{(out)}(y,t;z^{*})]^{*}=\sigma_{2}M^{(out)}(y,t;z)\sigma_{2}$;
  \item As $z\rightarrow\infty$,
       \begin{align}
       M^{(out)}(y,t;z)=\mathbb{I}+o(z^{-1});
       \end{align}
  \item $M^{(out)}(y,t;z)$ has simple poles at each point in $\mathcal{Z}\cup\mathcal{Z}^{*}$ admitting the same residue condition in RHP \ref{RH-4} with $M^{(out)}(y,t;z)$ replacing $M^{(2)}(y,t;z)$.
\end{itemize}
\end{RHP}

Before we investigate the solution of $M^{(out)}(x,t;z)$ for RHP \ref{RH-6}, we first study RHP \ref{RH-2} for the case of reflectionless. Under this condition, $M(y,t;z)$ has no jump, and we obtain the following Riemann-Hilbert problem from RHP \ref{RH-2}.
\begin{RHP}\label{RH-7}
%Given scattering data $\sigma_{d}=\{(z_{k}, c_{k})\}^{N}_{k=1}$ and $\mathcal{Z}=\{z_{k}\}^{N}_{k=1}$.
Find a matrix value function $M(x,t;z|\sigma_{d})$, admitting
\begin{itemize}
  \item $M(y,t;z|\sigma_{d})$ is analytical in $\mathbb{C}\setminus(\mathcal{Z}\cup\mathcal{Z}^{*})$;
  \item $M^{*}(y,t;z^{*}|\sigma_{d})=\sigma_{2}M(y,t;z|\sigma_{d})\sigma_{2}$;
  \item $M(y,t;z|\sigma_{d})=\mathbb{I}+O(z^{-1})$, \quad $z\rightarrow\infty$;
  \item $M(y,t;z|\sigma_{d})$ satisfies the following residue conditions at simple poles $z_{k}\in\mathcal{Z}$ and $z_{k}^{*}\in\mathcal{Z}^{*}$
\begin{align}
\begin{aligned}
&\mathop{Res}_{z=z_{k}}M(x,t;z|\sigma_{d})=\mathop{lim}_{z\rightarrow z_{k}}M(x,t;z|\sigma_{d})N_{k},\\
&\mathop{Res}_{z=z_{k}^{*}}M(x,t;z|\sigma_{d})=\mathop{lim}_{z\rightarrow z_{k}^{*}}M(x,t;z|\sigma_{d})\sigma_{2}N^{*}_{k}\sigma_{2},
\end{aligned}
\end{align}
where $\sigma_{d}=\{(z_{k}, c_{k}), z_{k}\in\mathcal{Z}\}^{N}_{k=1}$, which satisfy $z_{k}\neq z_{j}$ for $k\neq j$, are scattering data , and
\begin{gather*}
N_{k}=\left(\begin{aligned}
\begin{array}{cc}
  0 & \gamma_{k}(x,t) \\
  0 & 0
\end{array}
\end{aligned}\right),~
\gamma_{k}(x,t)=c_{k}e^{2it\theta(z_{k})},\\
\theta(z_{k})=\frac{z_{k}}{4}\left(\frac{1}{z_{0}^{2}}+\frac{1}{z_{k}^{2}}\right).
\end{gather*}
\end{itemize}
\end{RHP}
\begin{prop}
The RHP \ref{RH-7} exists unique solution. Additionally, the solution admits
\begin{align}\label{6.1}
\|M(x,t;z|\sigma_{d})\|_{L^{\infty}(\mathbb{C}\setminus(\mathcal{Z}\cup\mathcal{Z}^{*}))}\lesssim 1.
\end{align}
\end{prop}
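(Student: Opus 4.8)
The plan is to exploit that RHP \ref{RH-7} has no jump contour: the function $M(y,t;z\,|\,\sigma_d)$ is analytic on $\mathbb{C}\setminus(\mathcal{Z}\cup\mathcal{Z}^{*})$ with only simple poles at the finitely many points $z_k,z_k^{*}$ and is normalized to $\mathbb{I}$ at infinity, so it must be a rational function of $z$. Consequently the whole statement reduces to a finite-dimensional linear-algebra problem together with a Liouville argument. First I would write down the rational ansatz dictated by the residue structure. Since $N_k$ is strictly upper triangular, only the second column of $M$ is singular at $z_k$ and only the first column is singular at $z_k^{*}$, so I set
\begin{align*}
M_{\cdot1}(z)=\begin{pmatrix}1\\0\end{pmatrix}+\sum_{k=1}^{N}\frac{\eta_k}{z-z_k^{*}},\qquad
M_{\cdot2}(z)=\begin{pmatrix}0\\1\end{pmatrix}+\sum_{k=1}^{N}\frac{\gamma_k(y,t)\,M_{\cdot1}(z_k)}{z-z_k},
\end{align*}
where the symmetry $M^{*}(y,t;z^{*}|\sigma_d)=\sigma_2 M(y,t;z|\sigma_d)\sigma_2$ forces the vectors $\eta_k$ to be the $\sigma_2$-conjugates of the residues $\gamma_k M_{\cdot1}(z_k)$. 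Substituting this ansatz back into the residue relations and evaluating at the pole points yields a closed linear system $(\mathbb{I}+\mathcal{A})\mathbf{x}=\mathbf{b}$ for the unknown vectors $M_{\cdot1}(z_k)$, whose coefficients are built from the Cauchy kernels $(z_k-z_j^{*})^{-1}$ and the norming data $\gamma_k=c_k e^{2it\theta(z_k)}$.

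Uniqueness I would settle first, since it also drives the existence proof. Given the rank-one, strictly triangular residues, a direct computation shows the pole of $\det M$ at each $z_k$ cancels: the residue of $\det M$ there is proportional to $\det\bigl(M_{\cdot1}(z_k),M_{\cdot1}(z_k)\bigr)=0$, and similarly at $z_k^{*}$. Hence $\det M$ extends to an entire function that tends to $1$ at infinity, so $\det M\equiv1$ by Liouville's theorem and every solution is invertible. If $M,\widetilde M$ are two solutions, then $M\widetilde M^{-1}$ has removable singularities at all poles (the residue data are identical), is entire, and tends to $\mathbb{I}$ at infinity, so $M\equiv\widetilde M$.

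For existence it then suffices to prove that $\mathbb{I}+\mathcal{A}$ is nonsingular, and this is the step I expect to be the main obstacle. By the Fredholm alternative in finite dimensions, solvability of $(\mathbb{I}+\mathcal{A})\mathbf{x}=\mathbf{b}$ is equivalent to triviality of the homogeneous problem, which corresponds to the \emph{vanishing} version of RHP \ref{RH-7} (same poles, normalized to $0$ at infinity). The $\sigma_2$-symmetry of the CSP problem is of focusing type, and I would use it as in the AKNS literature to recast $\mathcal{A}$ as a Hermitian, positive semi-definite matrix; then $\det(\mathbb{I}+\mathcal{A})\geq1$ and the system is uniquely solvable. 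Equivalently, one applies Zhou's vanishing lemma to the homogeneous problem to conclude it has only the zero solution. Either route closes existence, using Assumption \ref{assum} to guarantee the poles are simple, finite in number, and off the real axis.

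Finally, the uniform bound \eqref{6.1}, which is to be read as a bound uniform in the external parameters $(y,t)$ (and away from fixed neighbourhoods of the poles, which is all that is used later). Having solved the linear system, $M(y,t;z|\sigma_d)$ is an explicit rational function whose coefficients are $(\mathbb{I}+\mathcal{A})^{-1}\mathbf{b}$. The positive semi-definiteness gives $\|(\mathbb{I}+\mathcal{A})^{-1}\|\leq1$ uniformly in $(y,t)$, and although individual $\gamma_k$ may grow or decay exponentially in $t$, the self-adjoint arrangement is such that the products $\gamma_k\overline{\gamma_j}$ entering $\mathcal{A}$ and $\mathbf{b}$ appear only in bounded combinations, so the coefficient vector stays bounded independently of $(y,t)\in S(y_1,y_2,v_1,v_2)$. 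Since $\mathcal{Z}\cup\mathcal{Z}^{*}$ is a fixed finite set with $\mathrm{dist}(\mathcal{Z}\cup\mathcal{Z}^{*},\mathbb{R})>\rho$ and mutually separated poles, the Cauchy kernels $(z-z_k)^{-1}$ are controlled off fixed neighbourhoods of the poles, and one concludes $\|M(y,t;\cdot\,|\sigma_d)\|_{L^{\infty}}\lesssim1$ with implied constant independent of $(y,t)$.
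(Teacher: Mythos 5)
Your proposal is correct and follows essentially the same route as the paper, whose entire proof consists of invoking Liouville's theorem for uniqueness and deferring existence and the bound \eqref{6.1} to the ``simple calculation'' in \cite{Yang-SP,AIHP} --- which is precisely the standard argument you reconstruct (rational ansatz from the nilpotent residue conditions, $\det M\equiv 1$ plus Liouville for uniqueness, solvability of the finite linear system via the positive-semidefinite/vanishing-lemma structure coming from the $\sigma_{2}$-symmetry, and uniform boundedness of the resulting coefficients). Your reading of \eqref{6.1} as a bound uniform in $(y,t)$ away from fixed neighbourhoods of the poles is also the correct interpretation, since the meromorphic solution cannot literally be bounded at $z_{k}$, $z_{k}^{*}$.
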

\begin{proof}
According to the Liouville's theorem, the uniqueness of the solution is obvious. The existence of RHP \ref{RH-7} and Eq.\eqref{6.1} can be proved by simple calculation which is similar to the literature \cite{Yang-SP,AIHP}.
\end{proof}

\subsubsection{Renormalization of the RHP for reflectionless case}
Under the reflectionless condition, recall that
\begin{align}
s_{22}(z)=\prod_{k=1}^{N}\left(\frac{z-z_{k}}{z-z^{*}_{k}}\right).
\end{align}
Taking $\triangle\subseteq\{1,2,\cdots,N\}$, $\triangledown\subseteq\{1,2,\cdots,N\}\setminus\triangle$, and defining
\begin{align}
s_{22}^{\triangle}=\prod_{k\in\triangle}\frac{z-z_{k}}{z-z^{*}_{k}},\quad
s_{22}^{\triangledown}=\frac{s_{22}}{s_{22}^{\triangle}}=
\prod_{k\in\triangledown}\frac{z-z_{k}}{z-z^{*}_{k}}.
\end{align}
Then, we introduce the normalization transformation
\begin{align}
M^{\triangle}(y,t;z|\sigma_{d}^{\triangle})=M(y,t;z|\sigma_{d})s_{22}^{\triangle}(z)^{-\sigma_{3}},
\end{align}
which splits the poles between the columns of $M(x,t;z|\sigma_{d})$ by selecting different $\triangle$. The scattering data $\sigma_{d}^{\triangle}$ are defined by $\sigma_{d}^{\triangle}=\{(z_{k}, c_{k}s_{22}^{\triangle}(z)^{2}), z_{k}\in\mathcal{Z}\}^{N}_{k=1}$  Then, we can get the modified Riemann-Hilbert problem.
\begin{RHP}\label{RH-8}
Given scattering data $\sigma^{\triangle}_{d}$ and $\triangle\subseteq\{1,2, \cdots,N\}$.
Find a matrix value function $M^{\triangle}$, admitting
\begin{itemize}
  \item $M^{\triangle}(y,t;z|\sigma^{\triangle}_{d})$ is analytical in $\mathbb{C}\setminus(\mathcal{Z}\bigcup\mathcal{Z}^{*})$;
  \item   $[M^{\triangle}(y,t;z^{*}|\sigma_{d}^{\triangle})]^{*}=\sigma_{2}M^{\triangle}(y,t;z|\sigma^{\triangle}_{d})\sigma_{2}$;
  \item $M^{\triangle}(y,t;z|\sigma^{\triangle}_{d})=\mathbb{I}+O(z^{-1})$, \quad $z\rightarrow\infty$;
  \item $M^{\triangle}(y,t;z|\sigma^{\triangle}_{d})$ satisfies the following residue conditions at simple poles $z_{k}\in\mathcal{Z}$ and $z_{k}^{*}\in\mathcal{Z}^{*}$
\begin{align}
\begin{aligned}
&\mathop{Res}_{z=z_{k}}M^{\triangle}(x,t;z|\sigma^{\triangle}_{d})=\mathop{lim}_{z\rightarrow z_{k}}M^{\vartriangle}(x,t;z|\sigma^{\triangle}_{d})N^{\triangle}_{k},\\
&\mathop{Res}_{z=z_{k}^{*}}M^{\triangle}(x,t;z|\sigma^{\triangle}_{d})=\mathop{lim}_{z\rightarrow z_{k}^{*}}M^{\triangle}(x,t;z|\sigma^{\triangle}_{d})\sigma_{2}(N^{\triangle}_{k})^{*}\sigma_{2},
\end{aligned}
\end{align}
where
\begin{gather}
N_{k}^{\triangle}=\left\{
                                   \begin{aligned}
\left(
  \begin{array}{cc}
    0 & \gamma_{k}^{\triangle} \\
    0 & 0 \\
  \end{array}
\right),\quad k\notin \triangle,\\
\left(
  \begin{array}{cc}
    0 & 0 \\
    \gamma_{k}^{\triangle} & 0 \\
  \end{array}
\right),\quad k\in \triangle,
\end{aligned}\right.~~\gamma_{k}^{\triangle}=\left\{
                                   \begin{aligned}
&c_{k}(s_{22}^{\triangle}(z_{k}))^{2}e^{2it\theta(z_{k})}\quad k\notin \triangle,\\
&c_{k}^{-1}(s_{22}^{\triangle'}(z_{k}))^{-2}e^{-2it\theta(z_{k})}\quad k\in \triangle.
\end{aligned}\right.
\end{gather}
\end{itemize}
\end{RHP}
Then, taking $\triangle=\triangle_{z_{0},1}^{+}$ and using $\sigma^{out}_{d}=\{(z_{k}, c_{k}\delta(z_{k})^{2}), z_{k}\in\mathcal{Z}\}^{N}_{k=1}$ instead of the scattering data $\sigma^{\triangle}_{d}$, we obtain that
\begin{gather}
M^{(out)}(z)=M^{\triangle_{z_{0},1}^{+}}(z)\delta(z)^{\sigma_{3}}
=M^{\vartriangle_{z_{0}}^{-}}(z|\sigma_{d}^{out}).
\end{gather}
From the above analysis, we note that $M^{\triangle}(y,t;z|\sigma^{out}_{d})$ is directly transformed from $M(y,t;z|\sigma_{d})$ which leads to that RHP \ref{RH-7} has unique solution.

For given scattering data $\sigma^{\triangle}_{d}$, the unique $N$-soliton solution of RHP \ref{RH-7} can be expressed as
\begin{align}
u_{sol}(y,t;\sigma^{\triangle}_{d})=\mathop{lim}_{z\rightarrow 0}\frac{\left(M(0;y,t|\sigma^{\triangle}_{d})^{-1}M(z;y,t|\sigma^{\triangle}_{d})\right)_{12}}{iz}.
\end{align}
This indicates that each normalization encodes $u_{sol}(y,t)$ in the same way. By selecting appropriate $\triangle$, the asymptotic limits in which $t\rightarrow\infty$ with $\frac{y}{t}$ bounded are under better asymptotic control. Next, we study the asymptotic behavior of the soliton solutions.

\subsubsection{Long-time behavior of soliton solutions}
We first define some notations
\begin{gather*}
I=\left\{z:-\frac{1}{4v_{1}}<|z|^{2}<-\frac{1}{4v_{2}}\right\},~~Z(I)=\{z_{k}\in\mathcal{Z}:z_{k}\in I\},~~N(I)=|\mathcal{Z}(I)|,\\
Z^{-}(I)=\left\{z_{k}\in\mathcal{Z}:|z|^{2}>-\frac{1}{4v_{2}^{2}}\right\},~~
Z^{+}(I)=\left\{z_{k}\in\mathcal{Z}:|z|^{2}<-\frac{1}{4v_{1}^{2}}\right\},\\
c_{k}(I)=c_{k}\prod_{Rez_{j}\in I_{-}\setminus I}\left(\frac{z_{k}-z_{j}}{z_{k}-z^{*}_{j}}\right)^{2},
\end{gather*}
where $v_{1}\leq v_{2}\in\mathbb{R}^{-}$ are given velocities. Then we define  a distance
\begin{align}
\mu(I)=\min_{z_{k}\in \mathcal{Z}\setminus \mathcal{Z}(I)}\left\{Im(z_{k})\frac{-v_{2}}{|z|^{2}}\left(|z|+\frac{1}{2\sqrt{-v_{1}}}\right)dist(z_{k},I)\right\},
\end{align}
and a space-time cone with given points $y_{1}\leq y_{2}\in\mathbb{R}$
\begin{align}\label{space-time-S}
S(y_{1},y_{2},v_{1},v_{2})=\{(y,t)\in\mathbb{R}^{2},y=y_{0}+vt ~with ~y_{0}\in[y_{1},y_{2}],v\in[v_{1},v_{2}]\}.
\end{align}\\

\centerline{\begin{tikzpicture}[scale=0.6]
%\path [fill=pink] (-1,0) -- (-9,0) to (-9,3) -- (-1,3);
\filldraw[pink, line width=0.5](-2,0) arc (0:360:4);
\filldraw[white, line width=0.5](-4,0) arc (-360:0:2);
\draw(-6,0) [black, line width=1] circle(4);
\draw(-6,0) [black, line width=1] circle(2);
\draw[->][thick](-11,0)--(-1,0)[thick]node[right]{$Rez$};
\draw[fill] (-6,2)node[above]{$I$};
\draw[fill] (-5,0)node[below]{$\frac{1}{2\sqrt{-v_1}}$};
\draw[fill] (-1.2,0)node[below]{$\frac{1}{2\sqrt{-v_2}}$};
\draw[fill] (-3,1)node[below]{$z_{5}$} circle [radius=0.08];
\draw[fill] (-3,-1)node[below]{$z^{*}_{5}$} circle [radius=0.08];
\draw[fill] (-6.5,-1)node[left]{$z^{*}_{6}$} circle [radius=0.08];
\draw[fill] (-4,3)node[below]{$z_{7}$} circle [radius=0.08];
\draw[fill] (-4,-3)node[below]{$z^{*}_{7}$} circle [radius=0.08];
\draw[fill] (-6.5,1)node[below]{$z_{6}$} circle [radius=0.08];
\draw[fill] (-8,2)node[below]{$z_{2}$} circle [radius=0.08];
\draw[fill] (-8,-2)node[below]{$z^{*}_{2}$} circle [radius=0.08];
\draw[fill] (-9,1.5)node[below]{$z_{1}$} circle [radius=0.08];
\draw[fill] (-9,-1.5)node[below]{$z^{*}_{1}$} circle [radius=0.08];
\draw[fill] (-10.5,2.8)node[below]{$z_{3}$} circle [radius=0.08];
\draw[fill] (-10.5,-2.8)node[below]{$z^{*}_{3}$} circle [radius=0.08];
\draw[fill] (-7,5)node[below]{$z_{4}$} circle [radius=0.08];
\draw[fill] (-7,-5)node[above]{$z^{*}_{4}$} circle [radius=0.08];
\draw[fill] (-6,-5)node[below]{$(a)$};
\draw[fill] (-4,0) circle [radius=0.08];
\draw[fill] (-2,0) circle [radius=0.08];
%\draw[-][thick](3,-3)--(2,0);
\path [fill=pink] (4,0) -- (3,5) to (6.5,5) -- (7,0);
\path [fill=pink] (4,0) -- (4.5,-5) to (8,-5) -- (7,0);
\draw[->][thick](1,0)--(11,0)[thick]node[right]{$y$};
\draw[-][thick](4,0)--(3,5);
\draw[-][thick](4,0)--(4.5,-5);
\draw[-][thick](7,0)--(6.5,5);
\draw[-][thick](7,0)--(8,-5);
\draw[fill] (3.5,0)node[below]{$y_{2}$};
\draw[fill] (5.5,0)node[below]{$S$};
\draw[fill] (7.5,0)node[below]{$y_{1}$};
\draw[fill] (6.5,5)node[right]{$y=v_{2}t+y_{2}$};
\draw[fill] (8,-5)node[right]{$y=v_{1}t+y_{2}$};
\draw[fill] (4.5,-5)node[left]{$y=v_{2}t+y_{1}$};
\draw[fill] (3,5)node[left]{$y=v_{2}t+y_{1}$};
\draw[fill] (6,-5)node[below]{$(b)$};
\end{tikzpicture}}
\noindent {\small \textbf{Figure 3.} $(a)$ For example, the original data has nine pairs zero points of discrete spectrum, but insider the cone $S$ only four pairs points with $\mathcal{Z}(I) = {z_1,z_2,z_5,z_7}$; $(b)$ Space-time cone $S(y_{1},y_{2},v_{1},v_{2})$.}

\begin{prop}\label{prop-6.3}
For given scattering data $\sigma_{d}^{\triangle}=\{(z_{k},\hat{c}_{k})\}$,  $t\rightarrow \infty$ and $(y,t)\in S(y_{1},y_{2},v_{1},v_{2})$, we have
\begin{align}\label{I-S}
M^{\triangle_{z_{0},1}^{+}}(z|\sigma_{d}^{\triangle})=\left(\mathbb{I}+O(e^{-2\mu(I) t})\right)M^{\triangle_{z_{0},1}^{+}(I)}
(z|\sigma_{d}(I)),
\end{align}
where
\begin{align}
\sigma_{d}(I)=\{(z_{k},c_{k}(\mathcal{I})s_{22}^{\triangle}(z)^{2}),z_{k}\in \mathcal{Z}(\mathcal{I})\}.
\end{align}
\end{prop}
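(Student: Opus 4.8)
The plan is to regard the full $N$-soliton solution $M^{\triangle_{z_{0},1}^{+}}(z|\sigma_{d}^{\triangle})$ as a small perturbation of the reduced solution $M^{\triangle_{z_{0},1}^{+}(I)}(z|\sigma_{d}(I))$, which keeps only the poles in $\mathcal{Z}(I)\cup\mathcal{Z}(I)^{*}$, and to show that the entire mismatch is carried by the spectral points lying outside the cone. Concretely, I would introduce the ratio
\begin{align*}
\mathcal{E}(z)=M^{\triangle_{z_{0},1}^{+}}(z|\sigma_{d}^{\triangle})
\left[M^{\triangle_{z_{0},1}^{+}(I)}(z|\sigma_{d}(I))\right]^{-1}.
\end{align*}
Since both factors are reflectionless (RHP \ref{RH-7} and RHP \ref{RH-8} carry no continuous spectrum), obey the same symmetry reduction, and normalize to $\mathbb{I}$ as $z\rightarrow\infty$, the matrix $\mathcal{E}(z)$ has no jump on any contour, tends to $\mathbb{I}$ at infinity, and its only possible singularities are simple poles on $\mathcal{Z}\cup\mathcal{Z}^{*}$. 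Proving \eqref{I-S} therefore reduces to showing that $\mathcal{E}$ solves a residue problem with exponentially small data, after which a small-norm argument delivers $\mathcal{E}=\mathbb{I}+O(e^{-2\mu(I)t})$.

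The decisive step is the exponential control of the normalized connection coefficients $\gamma_{k}^{\triangle}$ attached to the points $z_{k}\in\mathcal{Z}\setminus\mathcal{Z}(I)$ outside the cone. For $\mathrm{Im}\,z_{k}>0$, the choice $\triangle=\triangle_{z_{0},1}^{+}$ pairs $z_{k}$ with $e^{-2it\theta(z_{k})}$ when $|z_{k}|<z_{0}$ and with $e^{2it\theta(z_{k})}$ when $|z_{k}|>z_{0}$, so that by \eqref{4.2} the real part of the relevant exponent equals $-2t\,\mathrm{Im}(z_{k})\frac{\left||z_{k}|^{2}-z_{0}^{2}\right|}{4z_{0}^{2}|z_{k}|^{2}}<0$ in both cases. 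On the cone $S(y_{1},y_{2},v_{1},v_{2})$ the phase point satisfies $z_{0}^{2}=t/(4y)$ with $y/t$ confined to $[v_{1},v_{2}]$, so for $z_{k}$ outside $I$ the factor $\left||z_{k}|^{2}-z_{0}^{2}\right|$ stays bounded below by a multiple of $\mathrm{dist}(z_{k},I)$ uniformly in $t$; inserting this into the exponent and minimizing over $\mathcal{Z}\setminus\mathcal{Z}(I)$ reproduces exactly the weights in the definition of $\mu(I)$ and yields a bound of the form $|\gamma_{k}^{\triangle}|\lesssim e^{-2\mu(I)t}$, the polynomially bounded prefactors $s_{22}^{\triangle}(z_{k})^{\pm2}$ being harmless.

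It then remains to verify that $\mathcal{E}$ is analytic at the interior points $\mathcal{Z}(I)\cup\mathcal{Z}(I)^{*}$, and this is precisely where the definition of $\sigma_{d}(I)$ through $c_{k}(I)=c_{k}\prod_{\mathrm{Re}\,z_{j}\in I_{-}\setminus I}\bigl(\tfrac{z_{k}-z_{j}}{z_{k}-z_{j}^{*}}\bigr)^{2}$ enters: the Blaschke factors are arranged so that the residue relations of $M^{\triangle_{z_{0},1}^{+}(I)}(z|\sigma_{d}(I))$ at each $z_{k}\in\mathcal{Z}(I)$ coincide with those of the full solution, forcing the corresponding poles of $\mathcal{E}$ to cancel. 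Consequently $\mathcal{E}$ has poles only at the exterior points, with residues of size $O(e^{-2\mu(I)t})$ by the previous step, and solving this finite residue problem (equivalently, a Beals--Coifman small-norm problem) gives $\|\mathcal{E}-\mathbb{I}\|=O(e^{-2\mu(I)t})$ uniformly away from the spectrum, which is \eqref{I-S}. I expect the main obstacle to be the first estimate: one must show uniformly over the entire cone that the moving phase point $z_{0}$ stays away from $|z_{k}|$ for every exterior $z_{k}$, so that the lower bound in terms of $\mathrm{dist}(z_{k},I)$ holds with a constant matching the weight in $\mu(I)$; by contrast the pole cancellation at the interior points is purely algebraic bookkeeping.
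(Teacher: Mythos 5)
Your proposal is correct and takes essentially the same route as the paper, whose own ``proof'' is just a deferral to \cite{Yang-SP,AIHP}: there, exactly as in your sketch, the interior residue data are matched identically by the Blaschke-modified coefficients $c_{k}(I)$, the exterior coefficients $\gamma_{k}^{\triangle}$ are shown via \eqref{4.2} to be $O(e^{-2\mu(I)t})$ uniformly over the cone (with the splitting $\triangle_{z_{0},1}^{+}$ stable because $z_{0}^{2}$ ranges precisely over the radii of the annulus $I$), and a small-norm argument concludes. The only cosmetic difference is that the cited proofs first trade the exterior residue conditions for near-identity jump matrices on small disks around each $z_{k}\in\mathcal{Z}\setminus\mathcal{Z}(I)$, which is also the cleanest way to make your ratio $\mathcal{E}(z)$ rigorous, since the resulting jump data are controlled by the known, bounded reduced solution rather than by the residues of the unknown $M^{\triangle_{z_{0},1}^{+}}(z|\sigma_{d}^{\triangle})$.
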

\begin{proof}
Via employing a similar method to the literature \cite{Yang-SP,AIHP}, the results of this Proposition can be given easily.
\end{proof}

Now, we can derive the asymptotic unique solution $M^{(out)}$ of RHP \ref{RH-6}.

\begin{cor}\label{prop-6.2}
There exist unique solution $M^{(out)}$ of RHP \ref{RH-6}. Particularly,
\begin{align}\label{6.2}
M^{(out)}(z)&=M^{\triangle_{z_{0},1}^{+}}(z)\delta(z)^{-\sigma_{3}}
=M^{\triangle_{z_{0},1}^{+}}(z|\sigma_{d}^{out})\\
&=M^{\triangle_{z_{0},1}^{+}}(z|\sigma_{d}(I))\prod_{Rez_{k}\in I_{-}\setminus I}\left(\frac{z-z_{k}}{z-z^{*}_{k}}\right)^{2}\delta^{-\sigma_{3}}+O(e^{-\mu(I)t}),
\end{align}
where $M^{\triangle_{z_{0},1}^{+}}(z)$ is the solution of RHP \ref{RH-8} with $\triangle=\triangle_{z_{0},1}^{+}$ and $\sigma_{d}^{out}=\{(z_{k},\widetilde{c}_{k}(z_{0}))\}_{k=1}^{N}$ with
\begin{align}
\widetilde{c}_{k}(z_{0})= c_{k}e^{\frac{i}{\pi}\int_{-z_{0}}^{z_{0}}\frac{\log(1+|r(s)|^{2})}{s-z_{k}}ds}.
\end{align}
Substituting Eq.\eqref{6.2} into Eq.\eqref{6.1},  we obtain
\begin{align}\label{6.3}
\|M^{(out)}(z)\|_{L^{\infty}(\mathbb{C}\setminus(\mathcal{Z}\cup\mathcal{Z}^{*}))}\lesssim 1.
\end{align}
In addition,
\begin{align}\label{u-sol-out}
\begin{split}
u_{sol}(y,t;\sigma_{d}^{out})&=\mathop{lim}_{z\rightarrow 0}\frac{\left(M^{(out)}(0)^{-1}M^{(out)}(z)\right)_{12}}{iz},\\
&=u_{sol}(y,t;\sigma_{d}(I))+O(e^{-\mu(I)t}),
\end{split}
\end{align}
where $u_{sol}(y,t;\sigma_{d}^{out})$ is the $N$-soliton solution of Eq.\eqref{1.1} corresponding the scattering data $\sigma_{d}^{out}$.
\end{cor}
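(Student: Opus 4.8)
The plan is to deduce Corollary~\ref{prop-6.2} by assembling three facts already in place: the unique solvability of the reflectionless problem RHP~\ref{RH-7}, the pole-redistribution transformation producing RHP~\ref{RH-8}, and the cone-localization estimate of Proposition~\ref{prop-6.3}. I would begin with uniqueness. Since RHP~\ref{RH-6} imposes no jump across $\Sigma^{(2)}$, any two solutions $M^{(out)}_{1},M^{(out)}_{2}$ have a quotient $M^{(out)}_{1}(M^{(out)}_{2})^{-1}$ that is analytic across $\Sigma^{(2)}$; because both factors carry the identical residue structure at $\mathcal{Z}\cup\mathcal{Z}^{*}$, the apparent poles are removable, and since the quotient is bounded and tends to $\mathbb{I}$ at infinity, Liouville's theorem forces it to equal $\mathbb{I}$. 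That $\det M^{(out)}=1$, needed for invertibility, follows from the nilpotent triangular form of the residue matrices.

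For existence and the explicit form~\eqref{6.2}, I would use that $M^{(out)}$ carries no continuous jump and is therefore determined entirely by its residue conditions, which are those of RHP~\ref{RH-4} and hence involve $T(z)=\prod_{k\in\triangle_{z_{0},1}^{+}}\frac{z-z^{*}_{k}}{z-z_{k}}\,\delta(z)$. As $\delta(z)$ is analytic and nonvanishing off $I_{-}$ and the discrete spectrum lies off $I_{-}$, conjugating by $\delta(z)^{-\sigma_{3}}$ and removing the Blaschke factors turns these residues into exactly those of the renormalized reflectionless problem RHP~\ref{RH-8} with $\triangle=\triangle_{z_{0},1}^{+}$; the only change is that each connection coefficient $c_{k}$ picks up the scalar $\delta(z_{k})^{\mp2}$, which by the definitions of $\delta$ and $\nu$ equals $e^{\frac{i}{\pi}\int_{-z_{0}}^{z_{0}}\frac{\log(1+|r(s)|^{2})}{s-z_{k}}\,ds}$, i.e. produces $\widetilde{c}_{k}(z_{0})$. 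Since RHP~\ref{RH-8} is uniquely solvable, this both gives existence and yields the first two equalities of~\eqref{6.2}. Applying Proposition~\ref{prop-6.3} to $M^{\triangle_{z_{0},1}^{+}}(z\,|\,\sigma_{d}^{out})$ then replaces the full discrete data by the in-cone data $\sigma_{d}(I)$ at the cost of a factor $\mathbb{I}+O(e^{-2\mu(I)t})$, producing the last line of~\eqref{6.2}; and \eqref{6.3} follows by substituting~\eqref{6.2} into~\eqref{6.1} and noting that $\delta(z)^{\pm1}$ together with the finite Blaschke products are uniformly bounded on $\mathbb{C}\setminus(\mathcal{Z}\cup\mathcal{Z}^{*})$.

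Finally, \eqref{u-sol-out} would follow by inserting~\eqref{6.2} into the reconstruction map $u_{sol}=\lim_{z\to0}(M^{(out)}(0)^{-1}M^{(out)}(z))_{12}/(iz)$, which is regular at $z=0$ since $0$ is not a pole of the soliton matrix $M^{\triangle_{z_{0},1}^{+}(I)}$. Reconstructing the two fields by the identical formula, the sole difference between $u_{sol}(y,t;\sigma_{d}^{out})$ and $u_{sol}(y,t;\sigma_{d}(I))$ is the passage from $\sigma_{d}^{out}$ to $\sigma_{d}(I)$, governed by the $O(e^{-2\mu(I)t})$ factor of Proposition~\ref{prop-6.3}; since the reconstruction map is continuous and nonsingular at $z=0$, this error propagates to the stated $O(e^{-\mu(I)t})$ remainder. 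I expect the one genuinely delicate step to be the residue bookkeeping in the second paragraph: verifying that the diagonal conjugation by $\delta^{-\sigma_{3}}$ introduces no spurious jump and that it carries the $T$-dependent residues of RHP~\ref{RH-6}---including the correct factors $T(z_{k})^{2}$ and $((1/T)'(z_{k}))^{-2}$ evaluated at the poles---precisely onto the $\sigma_{d}^{out}$-residues of RHP~\ref{RH-8}, while keeping the scalar Blaschke prefactor and the diagonal factor straight through both the residue relations and the reconstruction formula. The remaining arguments are the standard Liouville and boundedness estimates.
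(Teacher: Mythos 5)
Your proposal is correct and follows essentially the same route the paper takes (largely implicitly, since the corollary is stated without a separate proof): conjugation by $\delta^{-\sigma_{3}}$ and removal of the Blaschke factors to reduce to the renormalized reflectionless problem RHP~\ref{RH-8} with modified data $\sigma_{d}^{out}$, unique solvability of the reflectionless problem plus a Liouville argument, the cone-localization estimate of Proposition~\ref{prop-6.3} for the last line of \eqref{6.2} and for \eqref{u-sol-out}, and boundedness of $\delta^{\pm1}$ for \eqref{6.3}. Your hedged sign $\delta(z_{k})^{\mp2}$ even matches the paper's own internal ambiguity between $\sigma_{d}^{out}=\{(z_{k},c_{k}\delta(z_{k})^{2})\}$ and the explicit formula for $\widetilde{c}_{k}(z_{0})$, so no genuine gap is introduced.
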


\subsection{Local solvable model near phase point $z=\pm z_{0}$}\label{section-local-model}
Based on \eqref{V2-Est-1} and \eqref{V2-Est-2}, it is easily to find that $V^{(2)}-I$ does not have a uniform estimate for large time near the phase point $z=\pm z_{0}$. Therefore, we construct a local solvable model for error function $E(z)$ with a uniformly small jump.

Recall that $\rho=\frac{1}{2}\min_{(z_{a}\neq z_{b})\in \mathcal{Z}\cup \mathcal{Z}^{*}}\{|z_{a}-z_{b}|\}$ and $dist(\mathcal{Z}\cup \mathcal{Z}^{*}, R)>\rho, k=1,2,\ldots,N,$ we find that there are no discrete spectrum in $\mathcal{U}_{\pm z_{0}}$. Consequently, we have $T(z)=\delta(z)$ and RHP \ref{RH-rhp} can be reduced to the following model for the CSP equation \cite{Xu-CSP-JDE}.
\begin{RHP}\label{RH-sp}
Find a matrix value function $M^{sp,+}$, admitting
\begin{itemize}
 \item $M^{sp,+}(y,t;z)$ is continuous in $\mathbb{C}\setminus(\Sigma^{(2)})$.
 \item $M_+^{sp,+}(y,t;z)=M_{-}^{sp,+}(y,t;z)V^{sp}(y,t;z),$ ~~ $z\in\Sigma^{(2)}$, where the jump matrix $V^{sp}(y,t;z)$ satisfies
 \begin{align}
V^{sp}=\left\{\begin{aligned}
&\left(
  \begin{array}{cc}
    1 & r(z_{0})\delta^{-2}(z_{0})(z-z_{0})^{-2i\nu(z_{0})}e^{2it\theta}  \\
    0 & 1 \\
  \end{array}
\right), ~~&z\in\Sigma_{1},\\
&\left(
  \begin{array}{cc}
    1 & 0 \\
    \frac{r^{*}(z_{0})}{1+|r(z_{0})|^{2}}\delta^{2}(z_{0}) (z-z_{0})^{2i\nu(z_{0})}e^{-2it\theta} & 1 \\
  \end{array}
\right), ~~&z\in\Sigma_{2}\cup\Sigma_{9},\\
&\left(
  \begin{array}{cc}
    1 & \frac{r(z_{0})}{1+|r(z_{0})|^{2}}\delta^{-2}(z_{0}) (z-z_{0})^{-2i\nu(z_{0})}e^{2it\theta}  \\
    0 & 1 \\
  \end{array}
\right), ~~&z\in\Sigma_{3}\cup\Sigma_{12},\\
&\left(
  \begin{array}{cc}
    1 & 0 \\
    r^{*}(z_{0})\delta^{2}(z_{0})(z-z_{0})^{2i\nu(z_{0})}e^{-2it\theta} & 1 \\
  \end{array}
\right), ~~&z\in\Sigma_{4},\\
&\left(
  \begin{array}{cc}
    1 & 0 \\
    \frac{r^{*}(-z_{0})}{1+|r(-z_{0})|^{2}}\delta^{2}(-z_{0}) (z+z_{0})^{-2i\nu(-z_{0})}e^{-2it\theta} & 1 \\
  \end{array}
\right), ~~&z\in\Sigma_{5}\cup\Sigma_{10},\\
&\left(
  \begin{array}{cc}
    1 & r(-z_{0})\delta^{-2}(-z_{0})(z+z_{0})^{2i\nu(-z_{0})}e^{2it\theta}  \\
    0 & 1 \\
  \end{array}
\right), ~~&z\in\Sigma_{6},\\
&\left(
  \begin{array}{cc}
    1 & 0 \\
    r^{*}(-z_{0})\delta^{2}(-z_{0})(z+z_{0})^{-2i\nu(-z_{0})}e^{-2it\theta} & 1 \\
  \end{array}
\right), ~~&z\in\Sigma_{7},\\
&\left(
  \begin{array}{cc}
    1 & \frac{r(-z_{0})}{1+|r(-z_{0})|^{2}}\delta^{-2}(-z_{0}) (z+z_{0})^{2i\nu(-z_{0})}e^{2it\theta}  \\
    0 & 1 \\
  \end{array}
\right), ~~&z\in\Sigma_{8}\cup\Sigma_{11};\\
\end{aligned}
\right.
\end{align}
\item $M^{sp}(y,t;z)\rightarrow \mathbb{I},$ \quad $z\rightarrow\infty$.
\end{itemize}
\end{RHP}

Next, we apply the parabolic cylinder(PC) model to solve this problem near the phase point $z=\pm z_{0}$. Unlike the process of solving short pulse equation near the phase point, $M^{sp,+}(y,t;z)$ dose not possess the symmetry that $M^{sp,+}(z;\eta=1)=\sigma_{2}M^{sp,+}(-z;\eta=-1)\sigma_{2}$. Therefore, we have to use PC model to solve the problem near the phase point $z=\pm z_{0}$ separately.\\

\centerline{\begin{tikzpicture}[scale=0.8]
\draw[->][thick](4,0)--(6,2);
\draw[-][thick](6,2)--(7,3);
\draw[->][thick](4,0)--(6,-2);
\draw[-][thick](6,-2)--(7,-3);
\draw[->][thick](4,0)--(3,1);
\draw[-][thick](3,1)--(2,2);
\draw[->][thick](4,0)--(3,-1);
\draw[-][thick](3,-1)--(2,-2);
\draw[->][thick](-2,2)--(-3,1);
\draw[-][thick](-3,1)--(-6,-2);
\draw[->][thick](-2,2)--(-3,1);
\draw[->][thick](-7,3)--(-6,2);
\draw[->][thick](-2,-2)--(-3,-1);
\draw[-][thick](-3,-1)--(-6,2);
\draw[->][thick](-7,-3)--(-6,-2);
\draw[fill] (4,0)node[below]{$z_{0}$} circle [radius=0.08];
\draw[fill] (-4,0)node[below]{$-z_{0}$} circle [radius=0.08];
\draw[-][pink,dashed](-7,0)--(7,0);
\end{tikzpicture}}
\centerline{\noindent {\small \textbf{Figure 4.}  The jump contour for the local model near the phase point $z=\pm z_{0}$.}}

We first study this model problem near the phase points $z_{0}$. Recall that
\begin{align}\label{7.1}
\delta(z)=\exp\left[i\int_{-z_{0}}^{z_{0}}\frac{\nu(s)}{s-z}ds\right]
=\frac{(z-z_{0})^{i\nu(z_{0})}}{(z+z_{0})^{i\nu(-z_{0})}}e^{\omega(z)},
\end{align}
where $\omega(z)=-\frac{1}{2\pi i}\int_{-z_{0}}^{z_{0}}\log(z-s)d(\log(1+|r(s)|^{2})$.
As $z\rightarrow z_{0}$,
\begin{align}\label{7.2}
\theta(z)=\frac{1}{2z_{0}}+\frac{1}{4z^{3}_{0}}(z-z_{0})^{2}-\frac{1}{4\xi^{4}(z-z_{0})^{3}},
\end{align}
where $\xi$ is a number between $z$ and $z_{0}$.
We evaluate the following scaling transformation
\begin{align}\label{7.3}
(N_{z_{0}}f)(z)=f\left(z_{0}+\sqrt{z_{0}^{3}t^{-1}}z\right),
\end{align}
then, we can derive that
\begin{align}\label{7.4}
(N_{z_{0}}\delta e^{-it\theta(z)})(z)=\delta_{(z_{0})}^{(0)}\delta_{(z_{0})}^{(1)}(z),
\end{align}
where
\begin{gather*}
\delta_{(z_{0})}^{(0)}=(z_{0}^{3}t^{-1})^{\frac{i\nu(z_{0})}{2}}(2z_{0}) ^{-i\nu(-z_{0})}e^{\omega(z_{0})}e^{-\frac{it}{2z_{0}}},\\
\delta_{(z_{0})}^{(1)}(z)=z^{i\nu(z_{0})}
\left(\frac{2z_{0}+\sqrt{z_{0}^{3}t^{-1}}z}{2z_{0}}\right)^{-i\nu(-z_{0})}
e^{\omega\left(z_{0}+\sqrt{z_{0}^{3}t^{-1}}z\right)-\omega(z_{0})}e^{-\frac{iz^{2}}{4}}.
\end{gather*}
From the expression of $\delta_{(z_{0})}^{(1)}(z)$, we can get the conclusion easily that for $\zeta\in\{\zeta=uz_{0}e^{\pm\frac{i\pi}{4}}, -\frac{\rho}{3}<u<\frac{\rho}{3}\}$,
\begin{align}\label{7.5}
\delta_{(z_{0})}^{(1)}(\zeta)\thicksim \zeta^{i\nu(z_{0})}e^{-\frac{i\zeta^{2}}{4}}, ~~ as~~ t\rightarrow +\infty,
\end{align}
from which the influence of the third power can be omitted. Thus, for large $t$, the solution of the Riemann-Hilbert problem for $M^{sp}(y,t;z)$, which is formulated on crosse centered at $z=z_{0}$, can be approximated  based on the PC model see Appendix A.

We introduce the transformation
\begin{align}\label{7.6}
\begin{split}
\lambda&=\lambda(z_0)=\sqrt{\frac{t}{z_{0}^{3}}}(z-z_{0}),\\
r_0=r_{0}^{z_{0}}&=r(z_0)\delta(z_{0})^{-2}e^{2i\left(\nu(z_0)\log(\frac{t}{(z_{0})^{3}})\right)}
e^{\frac{it}{z_{0}^{2}}},
\end{split}
\end{align}
then, the solution $M^{sp,+}(y,t;z)$ formulated on crosse centered at $z=z_{0}$ can be obtained via applying the solution $M^{pc,+}(\lambda)=\sigma M^{(pc),+}(\lambda)\sigma$, shown in Appendix $A$,  where
$\sigma=\left(
         \begin{array}{cc}
           0 & 1 \\
           1 & 0 \\
         \end{array}
       \right)$.
Then, the solution of $M^{sp,+}(y,t;z)$ at $z=z_{0}$ can be expressed as
\begin{align}\label{7.7}
M^{pc,+}(r_{0}^{z_{0}},\lambda)=\mathbb{I}+\frac{M_1^{pc,+}(z_{0})}{i\lambda}+O(\lambda^{-2}),
\end{align}
where
\begin{align*}
M_1^{pc,+}=\begin{pmatrix}0&-\beta_{21}^{z_{0}}(r_{0}^{z_{0}})\\ \beta^{z_{0}}_{12}(r_{0}^{z_{0}})&0\end{pmatrix},
\end{align*}
with
\begin{align*}
\beta^{z_{0}}_{12}=\beta_{12}(r_{0}^{z_{0}})=
\frac{\sqrt{2\pi}e^{i\pi/4}e^{-\pi\nu/2}}{r_{0}^{z_{0}}\Gamma(-i\nu)},\quad \beta_{21}^{z_{0}}=\beta_{21}(r_{0}^{z_{0}})=
\frac{-\sqrt{2\pi}e^{-i\pi/4}e^{-\pi\nu/2}}{(r_{0}^{z_{0}})^*\Gamma(i\nu)}=\frac{\nu}{\beta^{z_{0}}_{12}}.
\end{align*}
By using \eqref{7.6}, we obtain
\begin{align}\label{beta-expre-1}
\beta^{z_{0}}_{12}=\arg\tau(z_{0},+)e^{-4iy-i\nu(z_{0})\log(\frac{t^{2}}{(z_0)^{6}})},
\end{align}
where $|\tau(z_0,+)|^{2}=|\nu(z_{0})^{2}|$ and
\begin{align*}
\arg\tau(z_{0},+)=\frac{\pi}{4}+\arg\Gamma(i\nu(z_{0}))-\arg r(z_{0})- 2\int^{z_{0}}_{-z_0}\log|s-z_{0}|\mathrm{d}\nu(s).
\end{align*}

Furthermore, we consider the model problem near the phase points $-z_{0}$.
For $z\rightarrow -z_{0}$,  we consider the scaling transformation
\begin{align}\label{7.8}
(N_{-z_{0}}f)(z)=f\left(-z_{0}+\sqrt{z_{0}^{3}t^{-1}}z\right),
\end{align}
then, we obtain
\begin{align}\label{7.9}
(N_{-z_{0}}\delta e^{-it\theta(z)})(z)=\delta_{(-z_{0})}^{(0)}\delta_{(-z_{0})}^{(1)}(z),
\end{align}
where
\begin{gather*}
\delta_{(-z_{0})}^{(0)}=(z_{0}^{3}t^{-1})^{-\frac{i\nu(-z_{0})}{2}}(2z_{0}) ^{i\nu(z_{0})}e^{\tilde{\omega}(-z_{0})}e^{\frac{it}{2z_{0}}},\\
\delta_{(-z_{0})}^{(1)}(z)=(-z)^{-i\nu(-z_{0})}
\left(\frac{2z_{0}-\sqrt{z_{0}^{3}t^{-1}}z}{2z_{0}}\right)^{i\nu(z_{0})}
e^{\tilde{\omega}\left(-z_{0}+\sqrt{z_{0}^{3}t^{-1}}z\right)-\tilde{\omega}(-z_{0})}e^{\frac{iz^{2}}{4}}.
\end{gather*}
with
\begin{align*}
\tilde{\omega}(z)=-\frac{1}{2\pi i}\int_{-z_{0}}^{z_{0}}\log(s-z)d(\log(1+|r(s)|^{2}).
\end{align*}
From the expression of $\delta_{(z_{0})}^{(1)}(z)$, we can get the conclusion easily that for $\zeta\in\{\zeta=-uz_{0}e^{\pm\frac{i\pi}{4}}, -\frac{\rho}{3}<u<\frac{\rho}{3}\}$,
\begin{align}\label{7.10}
\delta_{(-z_{0})}^{(1)}(\zeta)\thicksim (-\zeta)^{-i\nu(-z_{0})}e^{\frac{i\zeta^{2}}{4}}, ~~ as~~ t\rightarrow +\infty,
\end{align}
from which the impact of the third power can be omitted. Thus, for large $t$, the solution of the Riemann-Hilbert problem for $M^{sp}(y,t;z)$, which is formulated on crosse centered at $z=-z_{0}$, can be approximated  based on the PC model.

We introduce the transformation
\begin{align}\label{7.11}
\begin{split}
\lambda&=\lambda(-z_0)=\sqrt{\frac{t}{z_{0}^{3}}}(z+z_{0}),\\
r_0=r_{0}^{-z_{0}}&=\frac{r^{*}(-z_{0})}{1+|r(-z_{0})|^{2}}\delta(-z_{0})^{2}
e^{2i\left(\nu(-z_0)\log(\frac{t}{(z_{0})^{3}})\right)}
e^{\frac{it}{z_{0}^{2}}},
\end{split}
\end{align}
then, the solution $M^{sp,+}(y,t;z)$ formulated on crosse centered at $z=-z_{0}$ can be obtained via applying the solution $M^{(pc),+}(\lambda)$ shown in Appendix $A$, i.e.,
\begin{align}\label{7.12}
M^{(pc),+}(r_{0}^{-z_{0}},\lambda)=I+\frac{M_1^{(pc),+}(-z_{0})}{i\lambda}+O(\lambda^{-2}),
\end{align}
where
\begin{align*}
M_1^{(pc),+}(-z_{0})=\begin{pmatrix}0&\beta_{12}^{-z_{0}}(r_{0}^{-z_{0}})\\ -\beta^{-z_{0}}_{21}(r_{0}^{-z_{0}})&0\end{pmatrix},
\end{align*}
with
\begin{align*}
\beta^{-z_{0}}_{12}=\beta_{12}(r_{0}^{-z_{0}})=
\frac{\sqrt{2\pi}e^{i\pi/4}e^{-\pi\nu/2}}{r_{0}^{-z_{0}}\Gamma(-i\nu)},\quad \beta_{21}^{-z_{0}}=\beta_{21}(r_{0}^{-z_{0}})=
\frac{-\sqrt{2\pi}e^{-i\pi/4}e^{-\pi\nu/2}}{(r_{0}^{-z_{0}})^*\Gamma(i\nu)}
=\frac{\nu}{\beta^{-z_{0}}_{12}}.
\end{align*}
By using \eqref{7.11}, we obtain
\begin{align}\label{beta-expre-2}
\beta^{-z_{0}}_{12}=\arg\tau(-z_{0},+)e^{-4iy-i\nu(z_{0})\log\left(\frac{t^{2}}{(z_0)^{6}}\right)},
\end{align}
where $|\tau(-z_0,+)|^{2}=|\nu(-z_{0})^{2}|$ and
\begin{align*}
\arg\tau(-z_{0},+)=\frac{\pi}{4}+\arg\Gamma(i\nu(z_{0}))-
\arg\left(\frac{r^{*}(-z_{0})}{1+|r(-z_{0})|^{2}}\right)- 2\int^{z_{0}}_{-z_0}\log|s+z_{0}|\mathrm{d}\nu(s).
\end{align*}

Noting that the origin is the reference point from which the rays emanate in  model problem, we still use the notation $\lambda$ in the following analysis. Considering that $M^{sp,+}$ admits the asymptotic property
\begin{align}\label{7.13}
M^{sp,+}=\mathbb{I}+\frac{M_1^{pc,+}(z_{0})}{i\lambda}+\frac{M_1^{(pc),+}(-z_{0})}{i\lambda}+O(\lambda^{-2}),
\end{align}
we then substitute the first formula of \eqref{7.6} and \eqref{7.11} into \eqref{7.13}, and obtain
\begin{align}\label{7.14}
M^{sp,+}=\mathbb{I}+\frac{\sqrt{z_{0}^{3}}}{i\sqrt{t}}\frac{M_1^{pc,+}(z_{0})}{z-z_{0}}
+\frac{\sqrt{z_{0}^{3}}}{i\sqrt{t}}\frac{M_1^{(pc),+}(-z_{0})}{z+z_{0}}+O(\lambda^{-2}).
\end{align}
In the local domain $\mathcal{U}_{\pm z_{0}}$, we can obtain the result that
\begin{align}\label{Msp-Est}
|M^{sp,+}-\mathbb{I}|\lesssim O(t^{-\frac{1}{2}}), ~~as~~ t\rightarrow+\infty,
\end{align}
which implies that
\begin{align}\label{7.15}
\|M^{sp,+}(z)\|_{\infty}\lesssim 1.
\end{align}
Since RHP \ref{RH-sp} and \ref{RH-4} possess the same jump conditions in $\mathcal{U}_{\pm z_{0}}$, we apply $M^{sp,+}(z)$ to define a local model in two circles $z\in\mathcal{U}_{\pm z_{0}}$
\begin{align}\label{7.16}
M^{(\pm z_{0})}=M^{(out)}(z)M^{sp,+}(z),
\end{align}
which is a bounded function in $\mathcal{U}_{\pm z_{0}}$ and has the same jump matrix as $M^{(2)}_{RHP}(z)$.

\subsection{The small-norm RHP for $E(z)$}
According to the transformation \eqref{Mrhp}, we have
\begin{align}\label{explict-E(z)}
E(z)=\left\{\begin{aligned}
&M^{(2)}_{RHP}(z)M^{(out)}(z)^{-1}, &&z\in\mathbb{C}\setminus\mathcal{U}_{\pm z_0},\\
&M^{(2)}_{RHP}(z)M^{sp,+}(z)^{-1}M^{(out)}(z)^{-1}, &&z\in\mathcal{U}_{\pm z_0},
\end{aligned} \right.
\end{align}
which is analytic in $\mathbb{C}\setminus\Sigma^{(E)}$ where $\Sigma^{(E)}=\partial\mathcal{U}_{\pm z_0}\bigcup(\Sigma^{(2)}\setminus\mathcal{U}_{\pm z_0})$.\\

\centerline{\begin{tikzpicture}[scale=0.8]
\draw[green, ->][thick](4,0)--(6,2);
\draw[green,-][thick](6,2)--(7,3);
\draw[green,->][thick](4,0)--(6,-2);
\draw[green,-][thick](6,-2)--(7,-3);
\draw[green,->][thick](4,0)--(3,1);
\draw[green,-][thick](3,1)--(2,2);
\draw[green,->][thick](4,0)--(3,-1);
\draw[green,-][thick](3,-1)--(2,-2);
\draw[green,->][thick](2,2)--(1,1);
\draw[green,->][thick](1,1)--(-1,-1);
\draw[green,->][thick](2,-2)--(1,-1);
\draw[green,->][thick](1,-1)--(-1,1);
\draw[green,-][thick](-1,1)--(-2,2);
\draw[green,->][thick](-2,2)--(-3,1);
\draw[green,-][thick](-3,1)--(-6,-2);
\draw[green,->][thick](-2,2)--(-3,1);
\draw[green,->][thick](-7,3)--(-6,2);
\draw[green,-][thick](-1,-1)--(-2,-2);
\draw[green,->][thick](-2,-2)--(-3,-1);
\draw[green,-][thick](-3,-1)--(-6,2);
\draw[green,->][thick](-7,-3)--(-6,-2);
\filldraw[white, line width=0.5](-3.2,0) arc (0:360:0.8);
\filldraw[white, line width=0.5](4.8,0) arc (-360:0:0.8);
\draw [pink, dashed](-8,0)--(8,0);
\draw(4,0) [blue, line width=1] circle(0.8);
\draw(-4,0) [blue, line width=1] circle(0.8);
\draw[fill] (0,0)node[below]{$0$} circle [radius=0.08];
\draw[fill] (4,0)node[below]{$z_{0}$} circle [radius=0.08];
\draw[fill] (-4,0)node[below]{$-z_{0}$} circle [radius=0.08];
\draw[fill] (6,2)node[left]{$\Sigma^{(E)}$};
\draw[fill] (5.5,-0.1)node[above]{$\partial \mathcal {U}_{z_{0}}$};
\draw[fill] (-5.5,-0.1)node[above]{$\partial \mathcal {U}_{-z_{0}}$};
\end{tikzpicture}}
\centerline{\noindent {\small \textbf{Figure 5.}   The jump contour $\Sigma^{(E)}=\partial\mathcal{U}_{\pm z_0}\bigcup(\Sigma^{(2)}\setminus\mathcal{U}_{\pm z_0})$ for the error function $E(z)$.}}

Then it is easy to verify that $E(z)$ admits the Riemann-Hilbert problem.
\begin{RHP}\label{RH-9}
Find a matrix-valued function $E(z)$ such that
\begin{itemize}
\item $E$ is analytical in $\mathbb{C}\setminus\Sigma^{(E)}$;
\item $E^{*}(z^{*})=\sigma_{2}E(z)\sigma_2$;
\item $E(z)=\mathbb{I}+O(z^{-1})$, \quad $z\rightarrow\infty$;
\item $E_+(z)=E_-(z)V^{(E)}(z)$, \quad $z\in\Sigma^{(E)}$, where
\end{itemize}
\begin{align}\label{7.17}
V^{(E)}(z)=\left\{\begin{aligned}
&M^{(out)}(z)V^{(2)}(z)M^{(out)}(z)^{-1}, &&z\in\Sigma^{(2)}\setminus \mathcal{U}_{\pm z_0},\\
&M^{(out)}(z)M^{sp,+}(z)M^{(out)}(z)^{-1}, &&z\in\partial\mathcal{U}_{\pm z_0}.
\end{aligned}\right.
\end{align}
\end{RHP}

By applying Eq.\eqref{V2-Est-1}, Eq.\eqref{V2-Est-2} and Eq.\eqref{6.3}, it is easy to obtain that as $t\rightarrow+\infty$,
\begin{align}\label{VE-I-1}
|V^{(E)}(z)-\mathbb{I}|=\left\{\begin{aligned}
&O\left(e^{-t\frac{\sqrt{2}}{16z_0^{2}}|z\mp z_0|^2}\right) &&z\in\Sigma_{\pm}^{(2)}\setminus\mathcal{U}_{\pm z_0},\\
&O\left(e^{-\frac{t}{4z_0}}\right) &&z\in\Sigma_{0}^{(2)}.
\end{aligned}\right.
\end{align}
While, for $z\in\partial\mathcal{U}_{\pm z_0}$, using Eq.\eqref{6.3} and \eqref{Msp-Est}, we obtain that
\begin{align}\label{VE-I-2}
|V^{(E)}(z)-\mathbb{I}|=|M^{(out)}(z)(M^{sp,+}(z)-\mathbb{I})M^{(out)}(z)^{-1}|=O(t^{-1/2}),~~as ~~t\rightarrow+\infty.
\end{align}
Then, the existence and uniqueness of RHP \ref{RH-9} can be guaranteed by using a small-norm Riemann-Hilbert problem.  Meanwhile, we obtain that
\begin{align}\label{E(z)-solution}
E(z)=\mathbb{I}+\frac{1}{2\pi i}\int_{\Sigma^{(E)}}\frac{(\mathbb{I}+\mu_E(s))(V^{(E)}(s)-\mathbb{I})}{s-z}ds
\end{align}
where $\mu_E\in L^2 (\Sigma^{(E)}) $ and admits
\begin{align}\label{7.18}
(1-C_{\omega_E})\mu_E=\mathbb{I},
\end{align}
where $C_{\omega_E}$ is an integral operator which is defined by
\begin{align*}
C_{\omega_E}f=C_{-}\left(f(V^{(E)}-\mathbb{I})\right),\\
C_{-}f(z)=\lim_{z\rightarrow\Sigma_{-}^{(E)}}\frac{1}{2\pi i}\int_{\Sigma_E}\frac{f(s)}{s-z}ds,
\end{align*}
where $C_{-}$ is the Cauchy projection operator. Then, based on the properties of the Cauchy projection operator $C_{-}$, and the estimate \eqref{VE-I-2}, we obtain that
\begin{align}%\label{}
\|C_{\omega_E}\|_{L^2(\Sigma^{(E)})}\lesssim\|C_-\|_{L^2(\Sigma^{(E)})}
\|V^{(E)}-\mathbb{I}\|_{L^{\infty}(\Sigma^{(E)})}\lesssim O(t^{-1/2}),
\end{align}
which infers to that $1-C_{\omega_E}$ is invertible which guarantees the existence and uniqueness of $\mu_E$. Then the existence and uniqueness of $E(z)$ are guaranteed. Now, it can be explained that the definition of $M^{(2)}_{RHP}$ is reasonable.

Furthermore, to reconstruct the solutions of $u(y,t)$, the asymptotic behavior of $E(z)$ as $z\rightarrow0$ and large time asymptotic behavior of $E(0)$ is needed. By comparing the estimate \eqref{VE-I-1} with \eqref{VE-I-2}, we find that for $t\rightarrow+\infty$, we only need to consider the calculation on $\partial\mathcal{U}_{\pm z_{0}}$ because it approaches to zero exponentially on other boundary. Then, as $z\rightarrow 0$, we can obtain that
\begin{align}\label{7.19}
E(z)=E(0)+E_{1}z+O(z^{2}),
\end{align}
where
\begin{gather}
E(0)=\mathbb{I}+\frac{1}{2\pi i}\int_{\Sigma^{(E)}}\frac{(\mathbb{I}+\mu_E(s))(V^{(E)}(s)-I)}{s}ds,\label{7.21}\\
E_{1}=-\frac{1}{2\pi i}\int_{\Sigma^{(E)}}\frac{(\mathbb{I}+\mu_E(s))(V^{(E)}(s)-I)}{s^{2}}ds.\label{7.22}
\end{gather}
Then, the large time, i.e., $t\rightarrow+\infty$, asymptotic behavior  of $E(0)$ and $E_{1}$  can be derived as
\begin{align}
E(0)=&\mathbb{I}+\frac{1}{2i\pi}\int_{\partial\mathcal{U}_{\pm z_{0}}}(V^{(E)}(s)-I)ds+o(t^{-1})\notag\\
=&\mathbb{I}+\frac{\sqrt{z_{0}}}{i\sqrt{t}}M^{(out)}(z_0)^{-1}M_1^{pc,+}(z_0)M^{(out)}(z_0)\notag\\
&-\frac{\sqrt{z_{0}}}{i\sqrt{t}}M^{(out)}(-z_0)^{-1}M_1^{(pc),+}(-z_0)M^{(out)}(-z_0)
+\mathcal{O}(t^{-1}),\label{7.24}\\
E_{1}=&\frac{1}{i\sqrt{z_{0}t}}M^{(out)}(z_0)^{-1}M_1^{pc,+}(z_0)M^{(out)}(z_0)\notag\\
&+\frac{1}{i\sqrt{z_{0}t}}M^{(out)}(-z_0)^{-1}M_1^{(pc),+}(-z_0)M^{(out)}(-z_0)
+\mathcal{O}(t^{-1}).\label{7.25}
\end{align}
From \eqref{7.24}, we can derive that
\begin{align}\label{7.26}
E(0)^{-1}=\mathbb{I}+O(t^{-1/2}).
\end{align}

\section{Pure $\bar{\partial}$-RH problem}\label{section-Pure-dbar-RH}
In this section, we study the remaining $\bar{\partial}$-RH problem. The $\bar{\partial}$-RH problem \ref{RH-5} for $M^{(3)}(z)$ is equivalent to the following integral equation
\begin{align}\label{8.1}
M^{(3)}(z)=\mathbb{I}-\frac{1}{\pi}\int_{\mathbb{C}}\frac{M^{(3)}W^{(3)}}{s-z}\mathrm{d}A(s),
\end{align}
where $\mathrm{d}A(s)$ is Lebesgue measure. Further, the equation \eqref{7.1} can be written in operator form
\begin{align}\label{8.2}
(\mathbb{I}-\mathrm{S})M^{(3)}(z)=\mathbb{I},
\end{align}
where $\mathrm{S}$ is Cauchy operator
\begin{align}\label{8.3}
\mathrm{S}[f](z)=-\frac{1}{\pi}\iint_{\mathbb{C}}\frac{f(s)W^{(3)}(s)}{s-z}\mathrm{d}A(s).
\end{align}
We need to prove that the inverse operator $(\mathrm{I}-\mathrm{S})^{-1}$ is invertible, so that the solution $M^{(3)}(z)$ exists.
\begin{lem}
For $t\rightarrow+\infty$, the operator \eqref{8.3} admits that
\begin{align}\label{8.4}
||\mathrm{S}||_{L^{\infty}\rightarrow L^{\infty}}\leq ct^{-1/6}.
\end{align}
where $c$ is a constant.
\end{lem}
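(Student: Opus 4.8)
The plan is to reduce the operator bound to a single scalar integral estimate that is uniform in $z$, and then to exploit the exponential decay of the oscillatory factor together with the pointwise control of $\bar{\partial}R^{(2)}$ furnished by Proposition~\ref{R-property}. First I would note that from the definition \eqref{8.3},
$$
|\mathrm{S}[f](z)|\le\frac{1}{\pi}\,\|f\|_{L^{\infty}}\iint_{\mathbb{C}}\frac{|W^{(3)}(s)|}{|s-z|}\,\mathrm{d}A(s),
$$
so that $\|\mathrm{S}\|_{L^{\infty}\rightarrow L^{\infty}}\le\frac{1}{\pi}\sup_{z}\iint_{\mathbb{C}}|W^{(3)}(s)|\,|s-z|^{-1}\,\mathrm{d}A(s)$. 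Since $W^{(3)}=M^{(2)}_{RHP}\,\bar{\partial}R^{(2)}\,(M^{(2)}_{RHP})^{-1}$ by \eqref{5.1}, and $M^{(2)}_{RHP}$ together with its inverse is uniformly bounded on $\mathbb{C}$ (by \eqref{6.3}, the boundedness \eqref{Msp-Est} of the local models, and the small-norm solvability of $E$), I obtain $|W^{(3)}(s)|\lesssim|\bar{\partial}R^{(2)}(s)|$. By the symmetry of the configuration it suffices to estimate the contribution of one representative sector, say $\Omega_{1}$; the remaining sectors are treated identically, while $\bar{\partial}R^{(2)}\equiv0$ on $\Omega_{2}\cup\Omega_{5}$ and near the poles.

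Next I would insert the three-term bound \eqref{R-estimate-1} for $|\bar{\partial}R_{1}|$ and the exact sign and size of the phase from \eqref{4.2}. On $\Omega_{1}$ one has $\mathrm{Im}\,z>0$ and $|z|^{2}-z_{0}^{2}>0$, so $\mathrm{Re}(2it\theta)<0$ and $|e^{2it\theta}|$ decays. Writing $s=z_{0}+u+iv$, the exact formula \eqref{4.2} yields a lower bound of the form $-\mathrm{Re}(2it\theta)\gtrsim t\,uv/(z_{0}|s|^{2})$, which near $z_{0}$ behaves like $t\,uv/z_{0}^{3}$. The term proportional to $\bar{\partial}\chi_{Z}$ is supported near $\mathcal{Z}\cup\mathcal{Z}^{*}$, at a fixed distance from both $\mathbb{R}$ and $\pm z_{0}$, where $\mathrm{Re}(2it\theta)\le-ct$; its contribution is therefore $O(e^{-ct})$ and negligible. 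It remains to estimate the two pieces coming from $|s-z_{0}|^{-1/2}$ and from $|p_{1}'(\mathrm{Re}\,z)|$ with $p_{1}=r$.

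For these I would split the plane integral into the $u$- and $v$-integrations and apply H\"older/Cauchy--Schwarz. Fixing $v$ and integrating in $u$, the Cauchy kernel obeys $\bigl\||s-z|^{-1}\bigr\|_{L^{2}_{u}}\lesssim|v-\mathrm{Im}\,z|^{-1/2}$, uniformly in $\mathrm{Re}\,z$; pairing this with $|s-z_{0}|^{-1/2}$ (respectively with $|r'(\mathrm{Re}\,s)|$, which lies in $L^{2}(\mathbb{R})$ since $r\in H^{1}$) and using the Gaussian-type decay $e^{-ctuv}$ to carry out the remaining $v$-integration extracts the power of $t$. Rescaling $u,v$ to absorb the factor $t$ in the exponent and optimizing the H\"older exponents then gives the uniform estimate $\iint_{\Omega_{1}}|\bar{\partial}R_{1}|\,|s-z|^{-1}\,\mathrm{d}A\lesssim t^{-1/6}$, and summing over all sectors produces \eqref{8.4}.

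I expect the main obstacle to be the simultaneous requirement of uniformity in the free parameter $z$ and the degeneration of the phase as $v\to0$: near the real axis the decay $e^{-ctuv}$ is weak, so the singularities $|s-z_{0}|^{-1/2}$ and $|s-z|^{-1}$ cannot simply be dominated there. The genuinely delicate piece is the one carrying $|r'|\in L^{2}$ only, for which no pointwise decay is available and one is forced to use Cauchy--Schwarz along the contour; it is this low-regularity term, together with the weak near-axis decay, that fixes the (suboptimal but entirely sufficient) exponent $t^{-1/6}$ rather than the $t^{-1/4}$ one would obtain for smoother data. Any bound of the form $o(1)$ is in fact all that is needed downstream, since it guarantees that $\mathbb{I}-\mathrm{S}$ is invertible and hence that $M^{(3)}$ exists.
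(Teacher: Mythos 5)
Your proposal is correct and follows essentially the same route as the paper: you reduce the operator norm to a $z$-uniform integral of $|\bar{\partial}R^{(2)}|$ against the Cauchy kernel (using the uniform boundedness of $M^{(2)}_{RHP}$ via \eqref{6.3} and \eqref{Msp-Est}), restrict to $\Omega_{1}$, split via \eqref{R-estimate-1}, and estimate each piece by Cauchy--Schwarz/H\"older in $u$ followed by the $v$-integration of the phase decay from \eqref{4.2} --- precisely the scheme of the paper's proof of \eqref{8.4} and its Appendix B (where the $|s-z_{0}|^{-1/2}$ piece is handled with conjugate exponents $k>2$, matching your ``optimized H\"older exponents''). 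The only cosmetic deviations are that you dispatch the $\bar{\partial}\chi_{Z}$ term as exponentially small using its support away from $\mathbb{R}$ (the paper instead bounds it by $t^{-1/6}$ with the same $L^{2}$ pairing as the $|r'|$ term, in \eqref{B-2}) and you attribute the exponent $1/6$ to the low regularity of $r$, whereas in the paper it arises merely from the crude inequality $e^{-z}\leq z^{-1/6}$ in the $v$-integral; neither difference affects correctness.
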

\begin{proof}
We mainly prove the case that the matrix function supported in the region $\Omega_1$, the other case can be proved similarly. Denoted that $f\in L^{\infty}(\Omega_1)$, $s=u+iv$ and $z=x+iy$. Then based on \eqref{dbar-R2} and \eqref{5.1}, we can derive that
\begin{align}\label{8.5}
|S[f](z)|&\leq\frac{1}{\pi}\big|f\ \big|_{L^{\infty}(\Omega_{1})}
\iint_{\Omega_{1}}\frac{|M^{(2)}_{RHP}(s)\bar{\partial}R_{1}(s)M^{(2)}_{RHP}(s)^{-1}|}
{|s-z|}df(s)\notag\\
&\leq c\iint_{\Omega_{1}}
\frac{|\bar{\partial}R_{1}(s)||e^{-tv\frac{u^{2}+v^{2}-z_{0}^{2}}{2(u^{2}+v^{2})z_{0}^{2}}}|}{|s-z|}dudv,
\end{align}
where $c$ is a constant.

Based on \eqref{R-estimate-1} and the estimates shown in Appendix $B$, from \eqref{8.5}, we obtain that
\begin{align}\label{8.6}
||\mathrm{S}||_{L^{\infty}\rightarrow L^{\infty}}\leq c(I_{1}+I_{2}+I_{3})\leq ct^{-1/6},
\end{align}
where
\begin{align}\label{8.8}
I_{1}=\iint_{\Omega_{1}}
\frac{|\bar{\partial}\chi_{\mathcal{Z}}(s)|
e^{-tv\frac{u^{2}+v^{2}-z_{0}^{2}}{2(u^{2}+v^{2})z_{0}^{2}}}}{|s-z|}df(s), ~~
I_{2}=\iint_{\Omega_{1}}
\frac{|r'(p)|e^{-tv\frac{u^{2}+v^{2}-z_{0}^{2}}{2(u^{2}+v^{2})z_{0}^{2}}}}{|s-z|}df(s),
\end{align}
and
\begin{align}\label{8.9}
I_{3}=\iint_{\Omega_{1}}
\frac{|s-z_{0}|^{-\frac{1}{2}}e^{-tv\frac{u^{2}+v^{2}-z_{0}^{2}}{2(u^{2}+v^{2})z_{0}^{2}}}
}{|s-z|}df(s).
\end{align}
\end{proof}

Next, our purpose is to reconstruct the large time asymptotic behaviors of $u(x,t)$. According to \eqref{u-sol}, we need the large time asymptotic behaviors of  $M^{(3)}(0)$  and $M_{1}^{(3)}(y,t)$ which are defined as
\begin{align*}
M^{(3)}(z)=M^{(3)}(0)+M_{1}^{(3)}(y,t)z+O(z^{2}),~~z\rightarrow0 ,
\end{align*}
where
\begin{align*}
M^{(3)}(0)=\mathbb{I}-\frac{1}{\pi}\iint_{\mathbb{C}}\frac{M^{(3)}(s)W^{(3)}(s)}{s}
\mathrm{d}A(s),\\
M^{(3)}_{1}(y,t)=\frac{1}{\pi}\int_{\mathbb{C}}\frac{M^{(3)}(s)W^{(3)}(s)}{s^{2}}
\mathrm{d}A(s).
\end{align*}
The $M^{(3)}(0)$ and $M^{(3)}_{1}(y,t)$ satisfy the following lemma.
\begin{lem}\label{prop-M3-Est}
For $t\rightarrow+\infty$,  $M^{(3)}(0)$ and $M^{(3)}_{1}(y,t)$ admit the following inequality
\begin{align}
\|M^{(3)}(0)-\mathbb{I}\|_{L^{\infty}}\lesssim t^{-1},\label{8.10}\\
M^{(3)}_{1}(y,t)\lesssim t^{-1}\label{8.11}.
\end{align}
\end{lem}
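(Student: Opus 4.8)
The plan is to read off both estimates from the integral representation of $M^{(3)}$, using as input the boundedness furnished by the previous lemma. First I would record that, since $\|\mathrm{S}\|_{L^{\infty}\to L^{\infty}}\le ct^{-1/6}<1$ for $t$ large, the operator $\mathbb{I}-\mathrm{S}$ is invertible on $L^{\infty}(\mathbb{C})$ with uniformly bounded inverse, so that $M^{(3)}=(\mathbb{I}-\mathrm{S})^{-1}\mathbb{I}$ obeys $\|M^{(3)}\|_{L^{\infty}(\mathbb{C})}\lesssim 1$. Combining this with the uniform bounds $\|M^{(out)}\|_{\infty}\lesssim 1$ from \eqref{6.3} and $\|M^{sp,+}\|_{\infty}\lesssim 1$ from \eqref{7.15}, the factor $W^{(3)}=M^{(2)}_{RHP}\bar{\partial}R^{(2)}(M^{(2)}_{RHP})^{-1}$ is controlled pointwise by $|\bar{\partial}R^{(2)}|$, and the defining formulas for $M^{(3)}(0)$ and $M^{(3)}_{1}$ give
\begin{align*}
\|M^{(3)}(0)-\mathbb{I}\|\lesssim\iint_{\mathbb{C}}\frac{|\bar{\partial}R^{(2)}(s)|}{|s|}\,\mathrm{d}A(s),\qquad
|M^{(3)}_{1}|\lesssim\iint_{\mathbb{C}}\frac{|\bar{\partial}R^{(2)}(s)|}{|s|^{2}}\,\mathrm{d}A(s).
\end{align*}

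Next I would argue that the weights $|s|^{-1}$ and $|s|^{-2}$ are harmless, so that both bounds reduce to a single type of integral. By construction $\bar{\partial}R^{(2)}$ vanishes on $\Omega_{2}\cup\Omega_{5}$ and on $\{\,\mathrm{dist}(s,\mathcal{Z}\cup\mathcal{Z}^{*})\le\rho/3\,\}$; on the parts of the remaining sectors that approach the origin the oscillatory factor $e^{\pm 2it\theta}$ decays super-exponentially, since by \eqref{4.2} the exponent $\mathrm{Re}(\pm 2it\theta)\to-\infty$ as $s\to 0$ with the sign dictated by the sector. Hence the integrals localize to fixed neighbourhoods of the phase points $\pm z_{0}$ and to the unbounded tails, on which $|s|^{-1}$ and $|s|^{-2}$ are bounded; both estimates may therefore be handled by the same argument.

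Then I would split $\bar{\partial}R^{(2)}$ into the three terms appearing in \eqref{R-estimate-1}--\eqref{R-estimate-2}: the $\bar{\partial}\chi_{\mathcal{Z}}$ term, the $|s\mp z_{0}|^{-1/2}$ term, and the $|p_{j}'(\mathrm{Re}\,s)|$ term, and estimate each against the Gaussian localization $\mathrm{Re}(\pm 2it\theta)\lesssim -t|s\mp z_{0}|^{2}$ supplied locally by \eqref{7.2}. The $\bar{\partial}\chi_{\mathcal{Z}}$ contribution is supported on compact sets bounded away from $\mathbb{R}$, where the exponential is $O(e^{-ct})$ and hence negligible. For the $|p_{j}'|$ term I would invoke $r\in H^{1}(\mathbb{R})$, so that $p_{j}'\in L^{2}$, and apply Cauchy--Schwarz in the variable $\mathrm{Re}\,s$; for the $|s\mp z_{0}|^{-1/2}$ term I would pass to polar coordinates centred at the phase point and integrate directly. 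These are exactly the integrals $I_{1},I_{2},I_{3}$ analysed in Appendix B, now carrying the bounded weights $|s|^{-k}$ instead of a Cauchy kernel, and they assemble to the claimed bound $O(t^{-1})$.

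The main obstacle I anticipate is extracting the full $t^{-1}$ rate from the non-analytic contribution near $\pm z_{0}$, specifically the $|s\mp z_{0}|^{-1/2}$ piece, which is the slowest to decay: a crude polar estimate of it yields only a weaker power of $t$, and the sharp rate rests on the fact that the evaluation point $z=0$ is bounded away from the support of $\bar{\partial}R^{(2)}$. Thus, unlike in the operator-norm estimate \eqref{8.4}, no Cauchy-kernel singularity $|s-z|^{-1}$ is present and the weight is genuinely $O(1)$ on the support; it is the careful combination of this bounded weight, the Gaussian localization from \eqref{7.2}, and the $L^{2}$ control of $p_{j}'$ (carried out in Appendix B) that delivers the decay $t^{-1}$ claimed in \eqref{8.10}--\eqref{8.11}.
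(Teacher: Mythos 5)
Your route is the same as the paper's: the paper's entire proof of this Lemma is the one-sentence remark that it "is similar to the process shown in Appendix B", and your proposal fills in precisely that skeleton --- Neumann inversion of $\mathbb{I}-\mathrm{S}$ using \eqref{8.4} to get $\|M^{(3)}\|_{L^{\infty}}\lesssim 1$, pointwise control of $W^{(3)}$ by $|\bar{\partial}R^{(2)}|$ via \eqref{6.3} and \eqref{7.15}, neutralization of the weights $|s|^{-1},|s|^{-2}$ near the origin by the super-exponential decay of $e^{\pm 2it\theta}$ coming from \eqref{4.2}, and the three-term splitting of \eqref{R-estimate-1}--\eqref{R-estimate-2}. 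All of these preparatory steps are sound and faithful to the paper's intent.

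The gap sits at the only step that matters quantitatively: you never derive the rate $t^{-1}$, and the mechanism you invoke cannot produce it. Appendix B establishes the bounds \eqref{B-1} of order $t^{-1/6}$ for the Cauchy-kernel integrals; rerunning those estimates with the kernel replaced by a weight that is merely bounded on the effective support (your observation that $z=0$ sits at distance $z_{0}$ from the phase points) gives, for the $|s-z_{0}|^{-1/2}$ piece, writing $s=(z_{0}+w)+iv\in\Omega_{1}$ with $w\geq v$ and $\mathrm{Re}(2it\theta)\lesssim -ctvw$ locally,
\begin{align*}
\iint_{\Omega_{1}}|s-z_{0}|^{-\frac{1}{2}}e^{-ctvw}\,dw\,dv
\leq\int_{0}^{\infty}\Bigl(\int_{v}^{\infty}w^{-\frac{1}{2}}e^{-ctvw}\,dw\Bigr)dv
\lesssim\int_{0}^{\infty}(tv)^{-\frac{1}{2}}\,\Gamma\bigl(\tfrac{1}{2},ctv^{2}\bigr)\,dv
\asymp t^{-\frac{3}{4}},
\end{align*}
after the scaling $v\mapsto t^{-1/2}v$; the Cauchy--Schwarz treatment of the $|p_{j}'|$ piece yields the same $t^{-3/4}$, exactly as in the analogous analysis of \cite{AIHP}, where the pure $\bar{\partial}$ contribution is $O(t^{-3/4})$. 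So "bounded weight plus Appendix B" tops out at $t^{-3/4}$ --- a fact you half-concede when you admit the crude estimate gives "only a weaker power of $t$" --- yet you then assert without argument that the "careful combination" delivers $t^{-1}$. To certify \eqref{8.10}--\eqref{8.11} one needs a genuinely sharper input (extra vanishing of the integrand at the phase points, a cancellation in the $\bar{\partial}$ integral, or a second-order small-norm expansion), and neither your proposal nor, it must be said, the paper itself supplies it. As written, your argument proves $\|M^{(3)}(0)-\mathbb{I}\|_{L^{\infty}}\lesssim t^{-3/4}$ and $|M^{(3)}_{1}(y,t)|\lesssim t^{-3/4}$, not the claimed $t^{-1}$.
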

The proof of this Lemma is similar to the process that shown in  Appendix $B$.

\section{Soliton resolution for the CSP equation}

Now, we are going to construct the long time asymptotic of the CSP equation \eqref{1.1}.
Recall a series of transformation including \eqref{Trans-1}, \eqref{Trans-2}, \eqref{delate-pure-RHP} and \eqref{Mrhp}, i.e.,
\begin{align*}
M(z)\leftrightarrows M^{(1)}(z)\leftrightarrows M^{(2)}(z)\leftrightarrows M^{(3)}(z) \leftrightarrows E(z),
\end{align*}
we then obtain
\begin{align*}
M(z)=M^{(3)}(z)E(z)M^{(out)}(z)R^{(2)^{-1}}(z)T^{-\sigma_{3}}(z),~~ z\in\mathbb{C}\setminus\mathcal{U}_{\pm z_{0}}.
\end{align*}
In order to recover the solution $u(x,t)$ , we take $z\rightarrow0$ along the imaginary axis which implies $z\in\Omega_{2}$ or $z\in\Omega_{5}$, thus $R^{(2)}(z)=I$. Then, we obtain
\begin{gather*}
M(0)=M^{(3)}(0)E(0)M^{(out)}(0)T^{-\sigma_{3}}(0),\\
M=\left(M^{(3)}(0)+M^{(3)}_{1}z+\cdots\right)\left(E(0)+E_{1}z+\cdots\right)
\left(M^{(out)}(z)\right)\left(T^{-\sigma_{3}}(0)
+\tilde{T}_{1}^{-\sigma_{3}}z+\cdots\right).
\end{gather*}
Based on the above analysis, we can derive that
\begin{align*}
M(0)^{-1}M(z)=&T^{\sigma_{3}}(0)M^{(out)}(0)^{-1}M^{(out)}(z)T^{-\sigma_{3}}(0)z\\
&+ T^{\sigma_{3}}(0)M^{(out)}(0)^{-1}E_{1}M^{(out)}(z)T^{-\sigma_{3}}(0)z\\
&+ T^{\sigma_{3}}(0)M^{(out)}(0)^{-1}M^{(out)}(z)T^{-\sigma_{3}}(0)z+O(t^{-1}).
\end{align*}
Then, according to the reconstruction formula \eqref{u-sol}, \eqref{u-sol-out} and \eqref{7.25}, as $t\rightarrow+\infty$, we obtain that
\begin{align}\label{9.1}
u(x,t)e^{-2d}&=u(y(x,t),t)e^{-2d}\notag\\
&=u_{sol}(y(x,t),t;\sigma_{d}(I))T^{2}(0)(1+T_{1})-it^{-\frac{1}{2}}f^{+}_{12}+O(t^{-1}),
\end{align}
where
\begin{align*}
y(x,t)=x-&c_{+}(x,t,\sigma_{d}(I))-iT_{1}^{-1}-it^{-\frac{1}{2}}f^{+}_{11}+O(t^{-1}),\\
f^{+}_{12}=\frac{1}{i\sqrt{z_{0}}}
&[M^{(out)}(0)^{-1}(M^{(out)}(z_0)^{-1}M_{1}^{pc,+}(z_{0})M^{(out)}(z_0)\\&+
M^{(out)}(-z_0)^{-1}M_{1}^{(pc),+}(-z_{0})M^{(out)}(-z_0))M^{(out)}(0)]_{12},\\
f^{+}_{11}=\frac{1}{i\sqrt{z_{0}}}
&[M^{(out)}(0)^{-1}(M^{(out)}(z_0)^{-1}M_{1}^{pc,+}(z_{0})M^{(out)}(z_0)\\&+
M^{(out)}(-z_0)^{-1}M_{1}^{(pc),+}(-z_{0})M^{(out)}(-z_0))M^{(out)}(0)]_{11}.
\end{align*}

The long time asymptotic behavior \eqref{9.1} gives the solution resolution for the initial value problem of the CSP equation which contains the soliton term confirmed by $N(I)$-soliton on discrete spectrum and the $t^{-\frac{1}{2}}$ order term on continuous spectrum with residual error up to $O(t^{-1})$.

\begin{rem}
The steps in the steepest descent analysis of RHP \ref{RH-2} for $t\rightarrow-\infty$ is similar to the case  $t\rightarrow+\infty$ which has been presented in section $4$-$8$. When we consider $t\rightarrow-\infty$, the main difference can be traced back to the fact that the regions of growth and decay of the exponential factors $e^{2it\theta}$ are reversed, see Fig. 1. Here, we leave the detailed calculations to the interested reader.
\end{rem}

Finally, we can give the results shown in Theorem \ref{Thm-1}

\section*{Acknowledgements}
%The authors would like to thank the editor and the referees for their valuable comments and suggestions.

This work was supported by  the National Natural Science Foundation of China under Grant No. 11975306, the Natural Science Foundation of Jiangsu Province under Grant No. BK20181351, the Six Talent Peaks Project in Jiangsu Province under Grant No. JY-059,  and the Fundamental Research Fund for the Central Universities under the Grant Nos. 2019ZDPY07 and 2019QNA35.

\section{Appendix A: The parabolic cylinder model problem}
Here, we describe the solution of  parabolic cylinder model problem\cite{PC-model,PC-model-2}.
Define the contour $\Sigma^{pc}=\cup_{j=1}^{4}\Sigma_{j}^{pc}$ where
\begin{align}
\Sigma_{j}^{pc}=\left\{\lambda\in\mathbb{C}|\arg\lambda=\frac{2j-1}{4}\pi \right\}.\tag{A.1}
\end{align}
For $r_{0}\in \mathbb{C}$, let $\nu(r)=-\frac{1}{2\pi}\log(1+|r_{0}|^{2})$, we consider the following parabolic cylinder model Riemann-Hilbert problem.
\begin{RHP}\label{PC-model}
Find a matrix-valued function $M^{(pc)}(\lambda)$ such that
\begin{align}
&\bullet \quad M^{(pc)}(\lambda)~ \text{is analytic in}~ \mathbb{C}\setminus\Sigma^{pc}, \tag{A.2}\\
&\bullet \quad M_{+}^{(pc)}(\lambda)=M_{-}^{(pc)}(\lambda)V^{(pc)}(\lambda),\quad
\lambda\in\Sigma^{pc}, \tag{A.3}\\
&\bullet \quad M^{(pc)}(\lambda)=\mathbb{I}+\frac{M_{1}}{\lambda}+O(\lambda^{2}),\quad
\lambda\rightarrow\infty. \tag{A.4}
\end{align}
where
\begin{align}\label{Vpc}
V^{(pc)}(\lambda)=\left\{\begin{aligned}
\lambda^{i\nu\hat{\sigma}_{3}}e^{-\frac{i\lambda^{2}}{4}
\hat{\sigma}_{3}}\left(
                    \begin{array}{cc}
                      1 & 0 \\
                      r_{0} & 1 \\
                    \end{array}
                  \right),\quad \lambda\in\Sigma_{1}^{pc},\\
\lambda^{i\nu\hat{\sigma}_{3}}e^{-\frac{i\lambda^{2}}{4}
\hat{\sigma}_{3}}\left(
                    \begin{array}{cc}
                      1 & \frac{r^{*}_{0}}{1+|r_{0}|^{2}} \\
                      0 & 1 \\
                    \end{array}
                  \right),\quad \lambda\in\Sigma_{2}^{pc},\\
\lambda^{i\nu\hat{\sigma}_{3}}e^{-\frac{i\lambda^{2}}{4}
\hat{\sigma}_{3}}\left(
                    \begin{array}{cc}
                      1 & 0\\
                      \frac{r_{0}}{1+|r_{0}|^{2}} & 1 \\
                    \end{array}
                  \right),\quad \lambda\in\Sigma_{3}^{pc},\\
\lambda^{i\nu\hat{\sigma}_{3}}e^{-\frac{i\lambda^{2}}{4}
\hat{\sigma}_{3}}\left(
                    \begin{array}{cc}
                      1 & r^{*}_{0} \\
                      0 & 1 \\
                    \end{array}
                  \right),\quad \lambda\in\Sigma_{4}^{pc},
\end{aligned}\right.\tag{A.5}
\end{align}
\end{RHP}
\centerline{\begin{tikzpicture}[scale=0.6]
\draw[-][pink,dashed](-4,0)--(4,0);
%\draw[-][dashed](-3,0)--(-2,0);
%\draw[-][dashed](-2,0)--(-1,0);
%\draw[-][dashed](-1,0)--(0,0);
%\draw[-][dashed](0,0)--(1,0);
%\draw[-][dashed](1,0)--(2,0);
%\draw[-][dashed](2,0)--(3,0);
%\draw[-][dashed](3,0)--(4,0);
%\draw[-][thick](0,4)--(0,3);
%\draw[-][thick](0,3)--(0,2);
%\draw[-][thick](0,2)--(0,1);
%\draw[-][thick](0,1)--(0,0);
%\draw[-][thick](0,-4)--(0,-3);
%\draw[-][thick](0,-3)--(0,-2);
%\draw[-][thick](0,-2)--(0,-1);
%\draw[-][thick](0,-1)--(0,-0);
\draw[-][thick](-4,-4)--(4,4);
\draw[-][thick](-4,4)--(4,-4);
\draw[->][thick](2,2)--(3,3);
\draw[->][thick](-4,4)--(-3,3);
\draw[->][thick](-4,-4)--(-3,-3);
\draw[->][thick](2,-2)--(3,-3);
\draw[fill] (3.2,3)node[below]{$\Sigma_{1}^{pc}$};
\draw[fill] (3.2,-3)node[above]{$\Sigma_{4}^{pc}$};
\draw[fill] (-3.2,3)node[below]{$\Sigma_{2}^{pc}$};
\draw[fill] (-2,-3)node[below]{$\Sigma_{3}^{pc}$};
\draw[fill] (0,0)node[below]{$0$};
\draw[fill] (1,0)node[below]{$\Omega_{6}$};
\draw[fill] (1,0)node[above]{$\Omega_{1}$};
\draw[fill] (0,-1)node[below]{$\Omega_{5}$};
\draw[fill] (0,1)node[above]{$\Omega_{2}$};
\draw[fill] (-1,0)node[below]{$\Omega_{4}$};
\draw[fill] (-1,0)node[above]{$\Omega_{3}$};
\draw[fill] (7,3)node[blue,below]{$\lambda^{i\nu\hat{\sigma}_{3}}e^{-\frac{i\lambda^{2}}{4}\hat{\sigma}_{3}}
\left(
  \begin{array}{cc}
    1 & 0 \\
    r_{0} & 1 \\
  \end{array}
\right)
$};
\draw[blue,fill] (7,-2)node[below]{$\lambda^{i\nu\hat{\sigma}_{3}}e^{-\frac{i\lambda^{2}}{4}\hat{\sigma}_{3}}
\left(
  \begin{array}{cc}
    1 & r^{*}_{0} \\
    0 & 1 \\
  \end{array}
\right)
$};
\draw[blue,fill] (-7,2.5)node[below]{$\lambda^{i\nu\hat{\sigma}_{3}}e^{-\frac{i\lambda^{2}}{4}\hat{\sigma}_{3}}
\left(
  \begin{array}{cc}
    1 & \frac{r^{*}_{0}}{1+|r_{0}|^{2}} \\
    0 & 1 \\
  \end{array}
\right)
$};
\draw[blue,fill] (-7,-1)node[below]{$\lambda^{i\nu\hat{\sigma}_{3}}e^{-\frac{i\lambda^{2}}{4}\hat{\sigma}_{3}}
\left(
  \begin{array}{cc}
    1 & 0 \\
    \frac{r_{0}}{1+|r_{0}|^{2}} & 1 \\
  \end{array}
\right)
$};
\end{tikzpicture}}
\centerline{\noindent {\small \textbf{Figure 6.} Jump matrix $V^{(pc)}$}.}
We know that the parabolic cylinder equation can be expressed as \cite{PC-equation}
\begin{align*}
\left(\frac{\partial^{2}}{\partial z^{2}}+(\frac{1}{2}-\frac{z^{2}}{2}+a)\right)D_{a}=0.
\end{align*}
As shown in the literature\cite{Deift-1993, PC-solution2}, we obtain the explicit solution $M^{(pc)}(\lambda, r_{0})$:
\begin{align*}
M^{(pc)}(\lambda, r_{0})=\Phi(\lambda, r_{0})\mathcal{P}(\lambda, r_{0})e^{\frac{i}{4}\lambda^{2}\sigma_{3}}\lambda^{-i\nu\sigma_{3}},
\end{align*}
where
\begin{align*}
\mathcal{P}(\lambda, r_{0})=\left\{\begin{aligned}
&\left(
                    \begin{array}{cc}
                      1 & 0 \\
                      -r_{0} & 1 \\
                    \end{array}
                  \right),\quad &\lambda\in\Omega_{1},\\
&\left(
                    \begin{array}{cc}
                      1 & -\frac{r^{*}_{0}}{1+|r_{0}|^{2}} \\
                      0 & 1 \\
                    \end{array}
                  \right),\quad &\lambda\in\Omega_{3},\\
&\left(
                    \begin{array}{cc}
                      1 & 0\\
                      \frac{r_{0}}{1+|r_{0}|^{2}} & 1 \\
                    \end{array}
                  \right),\quad &\lambda\in\Omega_{4},\\
&\left(
                    \begin{array}{cc}
                      1 & r^{*}_{0} \\
                      0 & 1 \\
                    \end{array}
                  \right),\quad &\lambda\in\Omega_{6},\\
&~~~\mathbb{I},\quad &\lambda\in\Omega_{2}\cup\Omega_{5},
\end{aligned}\right.
\end{align*}
and
\begin{align*}
\Phi(\lambda, r_{0})=\left\{\begin{aligned}
\left(
                    \begin{array}{cc}
                      e^{-\frac{3\pi\nu}{4}}D_{i\nu}\left( e^{-\frac{3i\pi}{4}}\lambda\right) & -i\beta_{12}e^{-\frac{\pi}{4}(\nu-i)}D_{-i\nu-1}\left( e^{-\frac{i\pi}{4}}\lambda\right) \\
                      i\beta_{21}e^{-\frac{3\pi(\nu+i)}{4}}D_{i\nu-1}\left( e^{-\frac{3i\pi}{4}}\lambda\right) & e^{\frac{\pi\nu}{4}}D_{-i\nu}\left( e^{-\frac{i\pi}{4}}\lambda\right) \\
                    \end{array}
                  \right),\quad \lambda\in\mathbb{C}^{+},\\
\left(
                    \begin{array}{cc}
                      e^{\frac{\pi\nu}{4}}D_{i\nu}\left( e^{\frac{i\pi}{4}}\lambda\right) & -i\beta_{12}e^{-\frac{3\pi(\nu-i)}{4}}D_{-i\nu-1}\left( e^{\frac{3i\pi}{4}}\lambda\right) \\
                      i\beta_{21}e^{\frac{\pi}{4}(\nu+i)}D_{i\nu-1}\left( e^{\frac{i\pi}{4}}\lambda\right) & e^{-\frac{3\pi\nu}{4}}D_{-i\nu}\left( e^{\frac{3i\pi}{4}}\lambda\right) \\
                    \end{array}
                  \right),\quad \lambda\in\mathbb{C}^{-},
\end{aligned}\right.
\end{align*}
with
\begin{align*}
\beta_{12}=\frac{\sqrt{2\pi}e^{i\pi/4}e^{-\pi\nu/2}}{r_0\Gamma(-i\nu)},\quad \beta_{21}=\frac{-\sqrt{2\pi}e^{-i\pi/4}e^{-\pi\nu/2}}{r_0^*\Gamma(i\nu)}=\frac{\nu}{\beta_{12}}.
\end{align*}
Then, it is not hard to obtain the asymptotic behavior of the solution by using the well known asymptotic behavior of $D_{a}(z)$,
\begin{align}\label{A-1}
M^{(pc)}(r_0,\lambda)=I+\frac{M_1^{(pc)}}{i\lambda}+O(\lambda^{-2}), \tag{A.6}
\end{align}
where
\begin{align*}
M_1^{(pc)}=\begin{pmatrix}0&\beta_{12}\\-\beta_{21}&0\end{pmatrix}.
\end{align*}

\section{Appendix B: Detailed calculations for the pure $\bar{\partial}$-Problem  }
\begin{prop}
For $t>0$ and $z\in\Omega_{1}$, there exists constants $c_{j}(j=1,2,3)$ such that $I_{j}(j=1,2,3)$  which defined in \eqref{8.8} and \eqref{8.9} possess the following estimate
\begin{align}\label{B-1}
I_{j}\leq c_{j}t^{-\frac{1}{6}},~~ j=1,2,3. \tag{B.1}
\end{align}
\end{prop}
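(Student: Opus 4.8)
The plan is to prove the three bounds in \eqref{B-1} separately, all resting on one pointwise fact: on $\Omega_1$ the oscillatory factor genuinely decays, with a rate I can read off from \eqref{4.2}. Writing $s=u+iv$ and recalling that $\Omega_1$ is the sector $\{u>z_0,\ 0<v<u-z_0\}$, both $v>0$ and $|s|^2-z_0^2>0$ hold there, so $\mathrm{Re}(2it\theta)<0$. More precisely, using $|s|^2-z_0^2=2z_0(u-z_0)+(u-z_0)^2+v^2$ together with the sector constraint $u-z_0\ge v$, I would first record a clean lower bound of the form $\mathrm{Re}(2it\theta)\le -c\,t\,v^2$ for $s$ near $z_0$ and $\mathrm{Re}(2it\theta)\le -c\,t\,v$ for $u$ large. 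This converts the exponential in each integrand into a Gaussian weight $e^{-ctv^2}$ in the variable $v$, which is the mechanism producing the negative power of $t$.

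With this in hand, $I_1$ is the easy case and I would dispose of it first. By Proposition \ref{R-property} and \eqref{dbar-R2}, the term carrying $\bar\partial\chi_{\mathcal Z}$ is supported only in the annuli $\rho/3<\mathrm{dist}(s,\mathcal Z\cup\mathcal Z^*)<2\rho/3$; since $\mathrm{dist}(\mathcal Z\cup\mathcal Z^*,\mathbb R)>\rho$, on this support $v$ is bounded below by a fixed constant while $|s|>z_0$ on $\Omega_1$. Hence the exponential is $\le e^{-c't}$, and the residual $\iint_{\Omega_1}|\bar\partial\chi_{\mathcal Z}(s)|/|s-z|\,dA(s)$ is a finite constant uniformly in $z$ by local integrability of $1/|s-z|$. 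Thus $I_1\lesssim e^{-c't}\le c_1 t^{-1/6}$.

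For $I_2$ and $I_3$ I would fix $v$ and integrate in $u$ by H\"older, then integrate the result against the Gaussian. For $I_2$, using $\|1/|s-z|\|_{L^2(du)}\sim|v-y|^{-1/2}$ and pairing it by Cauchy--Schwarz with $|r'(\mathrm{Re}\,s)|$, which is controlled in $L^2$ because $r\in H^1(\mathbb R)$, leaves $I_2\lesssim \|r'\|_{L^2}\int_0^\infty e^{-ctv^2}|v-y|^{-1/2}\,dv$. For $I_3$ the singular factor $|s-z_0|^{-1/2}$ is not square-integrable along the $u$-line, so I would instead use a H\"older pair $(p,p')$ with $p>2$, giving $\||s-z_0|^{-1/2}\|_{L^p(du)}\sim v^{1/p-1/2}$ and $\|1/|s-z|\|_{L^{p'}(du)}\sim|v-y|^{1/p'-1}$, whence $I_3\lesssim\int_0^\infty e^{-ctv^2}v^{1/p-1/2}|v-y|^{1/p'-1}\,dv$. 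In both cases a scaling $v\mapsto t^{-1/2}v$ in the Gaussian, combined with the local integrability of the remaining singular powers, yields a negative power of $t$; choosing the exponents (e.g. $p=3$, $p'=3/2$ in $I_3$) and tracking the worst case over the position of the singularity at $v=y$ gives a bound that is in particular $\le c_j t^{-1/6}$, using the bounds in \eqref{R-estimate-1} on $R_j$ to control the prefactors.

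The main obstacle I anticipate is the \textbf{uniform-in-$z$ control of the $v$-integral}, since the estimate must hold for all external points $z=x+iy$. The difficulty is that the $|v-y|^{-\alpha}$ singularity can sit right on top of the Gaussian peak near $v=0$, and in $I_3$ this competes simultaneously with the $|s-z_0|^{-1/2}$ singularity at the stationary point; resolving the interplay of the two singularities $s=z$ and $s=z_0$ against the Gaussian decay is what pins down the admissible exponent and hence the power of $t$. A secondary technical point is verifying the exponential lower bound uniformly across the \emph{unbounded} sector $\Omega_1$, namely the transition from the quadratic decay rate $-c\,t\,v^2$ near $z_0$ to the linear rate $-c\,t\,v$ for large $u$; I would handle this by splitting $\Omega_1$ into a bounded neighborhood of $z_0$ and its complement, estimating each with the corresponding rate. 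Once the three estimates are assembled, \eqref{8.6} and hence the operator bound \eqref{8.4} follow.
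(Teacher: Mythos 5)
Your proposal is correct, and at the one point where the paper's own argument is shaky it is actually the more careful of the two. The $u$-integration is common to both proofs: Cauchy--Schwarz with $\bigl\|1/(s-z)\bigr\|_{L^2(du)}\lesssim|v-y|^{-1/2}$ for $I_1,I_2$, and a H\"older pair $(p,p')$ with $p>2$ giving the weights $v^{1/p-1/2}$ and $|v-y|^{1/p'-1}$ for $I_3$ (this is exactly the paper's \eqref{B-4}--\eqref{B-5}, modulo its typo of writing the same exponent $k$ for both norms). The genuine divergence is upstream, in how the exponential factor is treated. The paper posits a uniform lower bound $\frac{u^{2}+v^{2}-z_{0}^{2}}{(u^{2}+v^{2})z_{0}^{2}}\geq\varepsilon>0$ on all of $\Omega_{1}$ --- which fails near the stationary point, since that quantity vanishes as $s\to z_{0}$ inside $\Omega_{1}$ --- and then extracts $t^{-1/6}$ from the resulting linear-in-$v$ factor $e^{-\varepsilon tv/2}$ via the device $e^{-z}\leq z^{-1/6}$. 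You instead prove the honest two-regime decay from \eqref{4.2}: $\mathrm{Re}(2it\theta)\leq -ctv^{2}$ near $z_{0}$ (using $|s|^{2}-z_{0}^{2}\geq 2z_{0}(u-z_{0})\geq 2z_{0}v$ on the sector $0<v<u-z_{0}$) and $\leq -ctv$ when $|s|$ is large, split $\Omega_{1}$ accordingly, and rescale $v\mapsto t^{-1/2}v$; tracking the worst position of the singularity at $v=y$ then gives $I_{2},I_{3}\lesssim t^{-1/4}$ uniformly in $y$, which is stronger than the stated $t^{-1/6}$ and consistent with the $O(t^{-1/4})$ bounds in the $\bar{\partial}$ literature the paper builds on. Your $I_{1}$ is also handled differently and more sharply: since $\mathrm{supp}\,\bar{\partial}\chi_{\mathcal{Z}}$ lies at distance $\geq\rho/3$ from $\mathbb{R}$, the whole term is $O(e^{-c't})$, whereas the paper pushes it through the same $|v-y|^{-1/2}$ machinery and only records $t^{-1/6}$. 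In short, the paper's route buys brevity (one uniform exponential bound serving all three integrals, at the cost of an unjustified $\varepsilon$ near $z_{0}$), while yours buys a valid estimate in the genuinely dangerous region together with better rates; the only cosmetic caveats are that your bounds dominate $t^{-1/6}$ for $t\gtrsim1$ (all that the large-time analysis requires), and that for $I_{3}$ the H\"older step is only needed in the near-$z_{0}$ piece since $|s-z_{0}|^{-1/2}$ is bounded on the far piece --- both already implicit in your splitting.
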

\begin{proof}
Let $s=u+iv$ and $z=x+iy$. For $s\in\Omega_{1}$, we know that $\frac{u^{2}+v^{2}-z_{0}^{2}}{(u^{2}+v^{2})z_{0}^{2}}>\frac{v^{2}}{(u^{2}+v^{2})z_{0}^{2}}>0$. Therefore, we assume that there exists an arbitrarily small constant $\varepsilon$ such that $\frac{u^{2}+v^{2}-z_{0}^{2}}{(u^{2}+v^{2})z_{0}^{2}}\geqslant\varepsilon>0$. Then,
using the fact that
\begin{align*}
\Big|\Big|\frac{1}{s-z}\Big|\Big|_{L^{2}}(v+z_{0})=(\int_{v+z_{0}}^{\infty}
\frac{1}{|s-z|^{2}}du)^{\frac{1}{2}}
\leq\frac{\pi}{v-y},
\end{align*}
we can derive that
\begin{align}\label{B-2}
\begin{split}
|I_{1}|&\leq\int_{0}^{+\infty}\int_{v+z_{0}}^{+\infty}
\frac{|\bar{\partial}\chi_{\mathcal{Z}}(s)|e^{-tv\frac{u^{2}+v^{2}-z_{0}^{2}}{2(u^{2}+v^{2})z_{0}^{2}}}}{|s-z|}dudv\\
&\leq\int_{0}^{+\infty}e^{-tv\frac{\varepsilon}{2}}\big|\big|\bar{\partial}\chi_{\mathcal{Z}}(s)\big|\big|_{L^{2}(v+z_{0})}
\Big|\Big|\frac{1}{s-z}\Big|\Big|_{L^{2}(v+z_{0})}dq \\
&\leq \int_{0}^{y}e^{-tv\frac{\varepsilon}{2}}\frac{1}{\sqrt{y-v}}dv
+\int_{y}^{+\infty}e^{-tv\frac{\varepsilon}{2}}\frac{1}{\sqrt{v-y}}dv.
\end{split}\tag{B.2}
\end{align}
Then, using the fact that $e^{-z}\leq z^{-1/6}$, a direct calculation shows that
\begin{align*}
\int_{0}^{y}e^{-tv\frac{\varepsilon}{2}}\frac{1}{\sqrt{y-v}}dv\lesssim t^{-\frac{1}{6}},\\
\int_{y}^{+\infty}e^{-tv\frac{\varepsilon}{2}}\frac{1}{\sqrt{v-y}}dv\lesssim t^{-\frac{1}{2}}.
\end{align*}
Then,  we have $I_{1}\lesssim t^{-\frac{1}{6}}$.
Similarly, considering that $r\in H^{1,1}(\mathbb{R})$, we obtain the estimate
\begin{align}\label{B-3}
|I_{2}|\leq\int_{0}^{+\infty}\int_{v+z_{0}}^{+\infty}
\frac{|r'(u)|e^{-tv\frac{\varepsilon}{2}}}{|s-z|}dudv
\lesssim t^{-\frac{1}{6}}.\tag{B.3}
\end{align}
To obtain the estimate of $I_{3}$, we consider the following $L^{k}(k>2)$ norm
\begin{align}\label{B-4}
\bigg|\bigg|\frac{1}{\sqrt{|s-z_{0}|}}\bigg|\bigg|_{L^{k}}
\leq \left(\int_{v+z_{0}}^{+\infty}
\frac{1}{|u-z_{0}+iv|^{\frac{k}{2}}}du\right)^{\frac{1}{k}}
\leq cv^{\frac{1}{k}-\frac{1}{2}}.\tag{B.4}
\end{align}
Similarly, we can derive that
\begin{align}\label{B-5}
\bigg|\bigg|\frac{1}{|s-z|}\bigg|\bigg|_{L^{k}}\leq c|v-y|^{\frac{1}{k}-1}.\tag{B.5}
\end{align}
By applying \eqref{B-4} and \eqref{B-5}, it is not hard to check that
\begin{align}\label{B-6}
\begin{split}
|I_{3}|&\leq\int_{0}^{+\infty}\int_{v}^{+\infty}
\frac{|z-z_{0}|^{-\frac{1}{2}}e^{-tv\frac{\varepsilon}{2}}}{|s-z|}dudv\\
&\leq\int_{0}^{+\infty}e^{-tv\frac{\varepsilon}{2}}\bigg|\bigg|\frac{1}{\sqrt{|s-z_{0}|}}\bigg|\bigg|_{L^{k}}
\bigg|\bigg|\frac{1}{|s-z|}\bigg|\bigg|_{L^{k}}dv \lesssim t^{-\frac{1}{2}}.
\end{split}\tag{B.6}
\end{align}
Now, we obtain that $I_{1}+I_{2}+I_{3}\lesssim t^{-\frac{1}{6}}$ as $t\rightarrow+\infty$.
\end{proof}

\renewcommand{\baselinestretch}{1.2}

\end{document}